%----------------------------------------------------------------
% AMS-LaTeX Paper ************************************************
% **** -----------------------------------------------------------

%\documentclass[3p, review]{elsarticle}
%\journal{???}
\documentclass[10.5pt]{article}

\usepackage{graphicx, amssymb, latexsym, amsfonts, amsmath, lscape, amscd,
amsthm, color, epsfig, mathrsfs, tikz, enumerate, soul}

%%%%%%%%%%%%%%%%%%%%%%%%%%%%%%%%%%%%%%%%%%%%% FORMAT PAGES - ERIC
%\parindent=.5cm
\setlength{\topmargin}{-1.5cm}
\setlength{\textheight}{23cm} % height of main text
\setlength{\textwidth}{16cm}    % width of text
\setlength{\oddsidemargin}{0cm} % odd page left margin
\setlength{\evensidemargin}{0cm} % even page left margin
%%%%%%%%%%%%%%%%%%%%%%%%%%%%%%%%%%%%%%%%%%%%%%%%

% ----------------------------------------------------------------
\vfuzz2pt % Don't report over-full v-boxes if over-edge is small
\hfuzz2pt % Don't report over-full h-boxes if over-edge is small
% THEOREMS -------------------------------------------------------
\newtheorem{theorem}{Theorem}[section]
\newtheorem{conjecture}[theorem]{Conjecture}
\newtheorem{corollary}[theorem]{Corollary}

\newtheorem{lemma}[theorem]{Lemma}
\newtheorem{proposition}[theorem]{Proposition}

%\theoremstyle{definition}

%\theoremstyle{remark}   

%\numberwithin{equation}{section}

%%%%%%%%%% ERIC
\newcommand\DELETE[1]{}

%% PROOF
%\newenvironment{proof}{
%\par
%\noindent {\bf Proof.}\rm}{\mbox{}\hfill$\square$\par\vskip 3mm}
%%%

\begin{document}

%\begin{frontmatter}

\title{{\bf On clique numbers of colored mixed graphs}}
\author{
{\sc Dipayan Chakraborty}$\,^{a}$, {\sc Sandip Das}$\,^{b}$, {\sc Soumen Nandi}$\,^{c}$, \\
{\sc Debdeep Roy}$\,^{d}$, {\sc Sagnik Sen}$\,^{a}$ \\
\mbox{}\\
{\small $(a)$ Indian Institute of Technology Dharwad, India}\\
{\small $(b)$ Indian Statistical Institute, Kolkata, India}\\
{\small $(c)$ Institute of Engineering and Management, India}\\
{\small $(d)$ ESSEC Business School Cergy, France}
}

\date{\today}

\maketitle

\begin{abstract}
An $(m,n)$-colored mixed graph, or simply, an $(m,n)$-graph is a graph having $m$ different types of arcs and $n$ different types of edges. A homomorphism of an $(m,n)$-graph $G$ to another $(m,n)$-graph $H$ is a vertex mapping that preserves adjacency; and the type and direction of the adjacency. 
An $(m,n)$-relative clique of $G$ is a vertex subset $R$ whose images are always distinct under any homomorphism of $G$ to any $H$. The maximum cardinality of an $(m,n)$-relative clique of a graph is called the $(m,n)$-relative clique number of the graph. In this article, we explore 
the $(m,n)$-relative clique numbers for three different families of graphs, namely, graphs having bounded maximum degree $\Delta$, subcubic graphs, partial $2$-trees and planar graphs and provide tight or close bounds in most cases. 
\end{abstract}

\noindent \textbf{Keywords:} colored mixed graph, clique number, degree, subcubic, partial $2$-trees, planar graphs.

\section{Introduction}
In 2000, Ne\v{s}et\v{r}il and Raspaud~\cite{raspaud_and_nesetril} generalized 
the concepts of graph homomorphism, coloring and chromatic number of a graph by introducing colored homomorphisms and $(m,n)$-chromatic numbers of $(m,n)$-graphs. 
Later, Bensmail, Duffy, and Sen~\cite{bensmail2017analogues} introduced and studied two more parameters closely related to that of $(m,n)$-chromatic number, namely, the $(m,n)$-relative clique number and the $(m,n)$-absolute clique number. While the latter~\cite{bensmail2017analogues} had focused majorly on studying the $(m,n)$-absolute clique number, the focus of this article is to study the $(m,n)$-relative clique numbers for different families of graphs.

An \textit{$(m,n)$-colored mixed graph}, or simply, an $(m,n)$-graph $G$ is a graph having $m$ different types of arcs and $n$ different types of edges. The set of vertices,  arcs, edges, and the underlying undirected graph of $G$ are denoted by $V(G)$, $A(G)$, $E(G)$, and $und(G)$ respectively. In this article, we will restrict ourselves to $(m,n)$-graphs whose underlying graph is a simple graph only.

Let us fix a convenient convention: whenever we speak of an $(m,n)$-graph $G$, 
we imagine that the arcs of $G$ are labeled (colored) by one of the symbols $1, 2, \cdots, m$, and that the edges of $G$ are labeled by one of $m+1, m+2, \cdots, m+n$. 
Equivalently,  we shall think of $\sigma$ to be a function from  
$A(G) \to \{1,2,\cdots, m\}$ and $E(G) \to \{m+1, m+2, \cdots, m+n\}$ that denotes the labels on the arcs/edges. Thus, for an arc/edge $uv$ with label $k$, we have $\sigma(uv) = k$.  
Furthermore, if $uv$ is an arc, we shall set the convention of having 
$\sigma(vu) = -\sigma(uv)$, and thus increasing the domain and range of $\sigma$. However, as $uv$ is an edge if and only if $vu$ is an edge, we shall simply have $\sigma(vu) = \sigma(uv)$.

A (colored) \textit{homomorphism}~\cite{raspaud_and_nesetril} of an $(m,n)$-graph $G$ to an $(m,n)$-graph $H$ is a 
function $f: V(G) \to V(H)$ such that for any $uv \in A(G) \cup E(G)$, we have $f(u)f(v) \in A(H) \cup E(H)$ and $\sigma(uv) = \sigma(f(u)f(v))$.  Moreover, whenever a homomorphism $G$ to $H$ is understood to exist, we simply denote it by $G \to H$. 
Then, the \textit{$(m,n)$-chromatic number} of $G$ is given by
$$\chi_{m,n}(G) = \min\{|V(H)| : G \to H\}.$$

An  $(m,n)$-\textit{relative clique}~\cite{bensmail2017analogues} $R$ of $G$ is a vertex subset, i.e. $R \subseteq V(G)$, such that $f(u) \neq f(v)$ for all distinct $u,v \in R$ under any homomorphism 
$f$ of $G$ to any $(m,n)$-graph $H$. The \textit{$(m,n)$-relative clique number} of $G$ is then given by 
$$ \omega_{r(m,n)}(G) = \max\{|R|: R \text{ is an $(m,n)$-relative clique of $G$}\}.$$

On the other hand, 
an  \textit{$(m,n)$-absolute clique}~\cite{bensmail2017analogues} $A$ is an $(m,n)$-graph such that 
$\chi_{m,n}(A) = |V(A)|$; and the \textit{$(m,n)$-absolute clique number} of $G$ is given by $$\omega_{a(m,n)}(G) = \max\{|V(A)|: A \text{ is subgraph of $G$ and is an $(m,n)$-absolute clique}\}.$$

Given $p \in \{\chi_{m,n}, \omega_{r(m,n)}, \omega_{a(m,n)}\}$, for a family $\mathcal{F}$ of undirected simple graphs, the above mentioned parameters are defined as 
$$p(\mathcal{F}) = \max\{p(G): und(G) \in \mathcal{F}\}.$$

A direct observation~\cite{bensmail2017analogues} from the definitions gives us a relation among the above three parameters, namely, for any $(m,n)$-graph $G$,
$$\omega_{a(m,n)}(G) \leq \omega_{r(m,n)}(G) \leq \chi_{m,n}(G).$$

To date,  one of the major directions of research for studying colored homomorphisms of colored mixed graphs has been via establishing lower and upper bounds of $\chi_{m,n}(\mathcal{F})$ for different well-known graph families~\cite{raspaud_and_nesetril}\cite{Montejano2009363}\cite{humer}.
Apart from that, for specific (small) values of $(m,n)$, the bounds of $\omega_{r(m,n)}(\mathcal{F})$ and 
$\omega_{a(m,n)}(\mathcal{F})$ have also been studied~\cite{36}\cite{unique_oclique}\cite{das2018study}. However, for general values of $(m,n)$, the bounds for $\omega_{r(m,n)}(\mathcal{F})$ and $\omega_{a(m,n)}(\mathcal{F})$ have not been studied extensively except for in~\cite{bensmail2017analogues}. 
This work is a sequel to~\cite{bensmail2017analogues} where we majorly explore the parameter 
$\omega_{r(m,n)}(\mathcal{F})$ for different interesting families of graphs.

\medskip

In this article, we present bounds of the two parameters namely the $(m,n)$-absolute and the $(m,n)$-relative clique numbers of some graph classes $\mathcal{G}$. In Section~\ref{Prelims}, some definitions and notation are given. Also we prove some basic lemmas which play important roles to prove the theorems in the subsequent sections. In Section~\ref{Bounded Max Deg}, we prove upper bounds of the 
$(m,n)$-absolute and the $(m,n)$-relative clique numbers
of graphs having bounded maximum degree. 
In Section~\ref{Subcubic graph}, we study the same parameters for subcubic graphs and provide tight bounds for each $(m,n)$. In Section~\ref{Partial 2-tree}, we study the parameters  for partial $2$-trees and  also partial $2$-trees having girth at least $g$ and provide tight bounds for all $g \geq 3$. 
In Section~\ref{Planar graphs}, we provide bounds of the same parameters for planar graphs having girth at least $g$ for each value of $g \geq 3$. For $g = 3,4$ we provide loose upper and lower bounds, but we do prove tight bounds for all other values of $g$. 
In Section~\ref{Conclusions}, we provide conclusive remarks along with two interesting conjectures.

\section{Preliminaries}\label{Prelims}
We follow West~\cite{D.B.West} for all standard notation and terminology in graph theory, and as far as the non-standard notation are concerned, we introduce them here. If $uv$ is an arc/edge or if $vu$ is an arc having 
$\sigma(uv) = \alpha$, then $v$ is called an \textit{$\alpha$-neighbor} of $u$ and the set of all $\alpha$-neighbors of $u$ is denoted by $N^{\alpha}(u)$. Moreover, all $\alpha$-neighbors of $u$ are said to \textit{agree} on $u$ by the adjacency type $\alpha$.

A \textit{special $2$-path} connecting two vertices $u$ and $v$ is a $2$-path $uwv$ such that $\sigma(uw) \neq \sigma(vw)$. 
In an $(m,n)$-graph, a vertex $u$ is said to \emph{see} a vertex $v$ if they are either adjacent, or are connected by a special 2-path. If $u$ and $v$ are connected by a special 2-path with $w$ as the internal vertex, then it is said that $u~sees~v~through~w$ or equivalently, $v~sees~u~through~w$.

A particularly handy characterization of a relative clique is the following:

\begin{lemma}\cite{bensmail2017analogues}\label{lem 2path char}
Two distinct vertices of an $(m,n)$-graph $G$ are part of a relative clique if and only if they are either adjacent or connected by a special $2$-path in $G$. 
\end{lemma}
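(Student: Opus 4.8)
The plan is to prove both directions of the equivalence. Since a vertex subset is a relative clique exactly when every pair of its members receives distinct images under all homomorphisms, the statement that a pair $\{u,v\}$ is ``part of a relative clique'' reduces to showing that $\{u,v\}$ is itself a relative clique; so I would analyze the two vertices $u$ and $v$ directly.

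For the direction asserting that adjacency or a special $2$-path is \emph{sufficient}, I would fix an arbitrary homomorphism $f\colon G\to H$ and argue $f(u)\neq f(v)$. If $uv\in A(G)\cup E(G)$, then $f(u)f(v)\in A(H)\cup E(H)$; since $H$ has a simple underlying graph it contains no loop, forcing $f(u)\neq f(v)$. If instead $u$ and $v$ are joined by a special $2$-path $uwv$, I would suppose toward a contradiction that $f(u)=f(v)$. Then $f(u)f(w)=f(v)f(w)$ as an ordered pair in $H$, so $\sigma(uw)=\sigma\big(f(u)f(w)\big)=\sigma\big(f(v)f(w)\big)=\sigma(vw)$, contradicting the defining inequality $\sigma(uw)\neq\sigma(vw)$ of a special $2$-path. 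Hence $f(u)\neq f(v)$ for every $f$, so $\{u,v\}$ is a relative clique.

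For the \emph{necessary} direction I would argue by contraposition: assuming $u$ and $v$ are neither adjacent nor joined by a special $2$-path, I would exhibit one $(m,n)$-graph $H$ and one homomorphism $f\colon G\to H$ with $f(u)=f(v)$, which prevents $u$ and $v$ from lying together in any relative clique. The natural choice is to let $H=G'$, the graph obtained from $G$ by identifying $u$ and $v$ into a single vertex $z$ (keeping all other vertices and adjacencies), and to take $f$ to be the identity off $\{u,v\}$ with $f(u)=f(v)=z$. The crux is checking that $G'$ is a legitimate simple $(m,n)$-graph: no loop is created at $z$ because $u$ and $v$ are non-adjacent, and no conflicting multi-arc/edge arises because, for any common neighbor $w$ of $u$ and $v$, the absence of a special $2$-path gives $\sigma(uw)=\sigma(vw)$ (equivalently $\sigma(wu)=\sigma(wv)$ after accounting for the sign convention on arc directions), so the two adjacencies $uw$ and $vw$ collapse to a single consistently labeled adjacency $zw$. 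Once $G'$ is seen to be well defined, $f$ preserves every arc/edge and its label by construction, so $f$ is the desired homomorphism.

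I expect the only real obstacle to be the bookkeeping in this last step: one must verify the well-definedness of $G'$ separately for each type of neighbor (arcs in each orientation and edges of each color) and confirm that the convention $\sigma(vu)=-\sigma(uv)$ for arcs makes the ``no special $2$-path'' hypothesis exactly equivalent to ``$u$ and $v$ are the same type of neighbor of every common neighbor $w$''. The sufficiency direction and the loop-freeness of $G'$ are immediate; the identification argument is conceptually simple but is where all the definitional care is concentrated.
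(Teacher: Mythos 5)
Your proof is correct. Note that the paper itself gives no proof of this lemma --- it is imported from Bensmail, Duffy and Sen~\cite{bensmail2017analogues} as a black box --- so there is no in-paper argument to compare against. Your two directions are the standard ones: sufficiency follows because a homomorphism into a loopless target separates adjacent vertices and preserves $\sigma$, so identifying the endpoints of a special $2$-path $uwv$ would force $\sigma(uw)=\sigma(f(u)f(w))=\sigma(f(v)f(w))=\sigma(vw)$, a contradiction; necessity follows by identifying $u$ and $v$ in $G$ itself, which is exactly where the definitional care lies. Your key observation there is the right one: with the convention $\sigma(vu)=-\sigma(uv)$, the condition ``no special $2$-path through $w$'' is literally $\sigma(uw)=\sigma(vw)$, which says $u$ and $v$ are the same type of neighbor of $w$ (same color and, for arcs, same orientation relative to $w$), so the quotient $G'$ is a well-defined simple $(m,n)$-graph and the quotient map is a homomorphism. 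The argument is complete.
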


For the convenience of reference, we are going to name a  particular notion that will be used time and again in our proofs. Let $\mathcal{F}$ be a graph family. A \textit{critical $(m,n)$-relative clique} $H$ of $\mathcal{F}$ is an $(m,n)$-graph $H$ satisfying the following properties:
\begin{enumerate}[(i)]
\item   $und(H) \in \mathcal{F}$,

\item $\omega_{r(m,n)}(H) = \omega_{r(m,n)}(\mathcal{F})$,

\item $\omega_{r(m,n)}(H^*) < \omega_{r(m,n)}(\mathcal{F})$ if
$(|V(H^*)|, |E(und(H^{*}))|) < (|V(H)|, |E(und(H))|)$  in the dictionary ordering, where $H^* \in \mathcal{F}$.
\end{enumerate}

Whenever we consider a critical $(m,n)$-relative clique $H$ of graph family 
$\mathcal{F}$, we shall assume that $H$ contains a relative clique $R$ of cardinality
$\omega_{r(m,n)}(H)$. To that end, the vertices of $R$ are called \textit{good} vertices and those of $S = V(H) \setminus R$ are called the \textit{helper} vertices. Therefore, whenever we consider a critical relative clique $H$ for a graph family $\mathcal{F}$, the symbols $R$ and $S$ and the terms good and helper vertices should make sense. Due to the minimality of $H$, we can directly observe the following:

\begin{lemma}\label{obs S is indep}
Any critical $(m,n)$-relative clique $H$ of a subgraph closed family $\mathcal{F}$ is connected and $S$ is an independent set. 
\end{lemma}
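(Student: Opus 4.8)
The plan is to prove the two assertions --- connectedness of $H$ and independence of $S$ --- separately, in each case arguing by contradiction using the minimality condition (iii) in the definition of a critical $(m,n)$-relative clique. The key leverage throughout will be Lemma~\ref{lem 2path char}: any two good vertices in $R$ are either adjacent or joined by a special $2$-path, and every special $2$-path witnessing such a pair lives entirely within a single connected component of $H$.

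\medskip

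For connectedness, suppose $H$ is disconnected. First I would observe that all of $R$ must lie in one component: if two good vertices $u,v$ lay in different components, then by Lemma~\ref{lem 2path char} they would have to be adjacent or connected by a special $2$-path, both of which require them to be in the same component --- a contradiction. So let $C$ be the component of $H$ containing $R$. Then $C$ alone already contains a relative clique of size $|R| = \omega_{r(m,n)}(H) = \omega_{r(m,n)}(\mathcal{F})$, so $\omega_{r(m,n)}(C) \geq \omega_{r(m,n)}(\mathcal{F})$; combined with the fact that $und(C) \in \mathcal{F}$ (since $\mathcal{F}$ is subgraph-closed) and the definition $\omega_{r(m,n)}(\mathcal{F}) = \max\{\omega_{r(m,n)}(G):und(G)\in\mathcal{F}\}$, we get equality. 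But $C$ is a proper subgraph of $H$ in the dictionary ordering on $(|V|,|E|)$, contradicting the minimality clause (iii). Hence $H$ is connected.

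\medskip

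For the independence of $S$, suppose toward a contradiction that two helper vertices $s_1, s_2 \in S$ are adjacent via an arc or edge $e$. The idea is to delete $e$ and check that $R$ is still a relative clique of the smaller graph $H - e$. By Lemma~\ref{lem 2path char}, it suffices to verify that every pair of good vertices is still adjacent or joined by a special $2$-path after deleting $e$. Since good vertices lie in $R$ and $s_1,s_2 \in S$, the edge $e$ joins no good vertex, so no adjacency between good vertices is destroyed. The remaining thing to confirm is that $e$ was not the middle leg of any special $2$-path certifying a pair $u,v \in R$; but the internal vertex $w$ of a special $2$-path $uwv$ is joined to both $u$ and $v$, whereas $e = s_1 s_2$ joins only helper vertices, so $e$ cannot be the first or second edge of a $u$--$v$ path with endpoints in $R$. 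Thus $R$ remains a relative clique in $H - e$, giving $\omega_{r(m,n)}(H-e) \geq |R| = \omega_{r(m,n)}(\mathcal{F})$, and since $und(H-e)\in\mathcal{F}$ we again get equality; but $H-e$ has strictly fewer edges than $H$, contradicting (iii).

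\medskip

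The main obstacle, and the point demanding the most care, is the edge-deletion step for the independence claim: one must be sure that removing $e$ cannot inadvertently destroy a special $2$-path between two \emph{good} vertices. The argument above closes this gap cleanly because the endpoints of $e$ are both helpers, so $e$ can touch a $u$--$v$ special $2$-path only as its internal edges, which would force one endpoint of $e$ to be the common neighbor $w$ adjacent to both $u,v\in R$ --- but then the \emph{other} endpoint of that edge would have to be a good vertex, contradicting $e \subseteq S$. I would state this observation explicitly, since it is exactly what makes the minimality argument go through.
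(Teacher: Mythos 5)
Your proposal is correct and follows essentially the same route as the paper: for connectedness, one component already carries the whole relative clique $R$ (since any two vertices of $R$ must be adjacent or joined by a special $2$-path, hence co-component), contradicting minimality; for independence of $S$, an edge between two helpers can be neither an adjacency between good vertices nor a leg of a special $2$-path with both endpoints in $R$, so deleting it preserves $R$ and again contradicts minimality. The only difference is that you spell out the details the paper leaves implicit.
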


\begin{proof}
If $H$ is not connected, then one of its components, say, $H_1$  
must have $\omega_{r(m,n)}(H) = \omega_{r(m,n)}(H_1)$. This contradicts the criticality of $H$, and thus $H$ is connected. 

If there is an arc or an edge $e$ between two vertices of $S$, then note that $e$ does not contribute to a pair of vertices of $R$ seeing one another. Thus, we will have 
$\omega_{r(m,n)}(H) = \omega_{r(m,n)}(H - \{e\})$, contradicting the criticality of $H$. Hence, $S$ is an independent set. 
\end{proof}

Let $G \in \mathcal{F}$ be any graph having a vertex $u$ of degree at most $k$. 
Let $G^*$ be the graph obtained by 
adding edges between every non-adjacent pair of neighbors of $u$ and then 
deleting the vertex $u$. If the family $\mathcal{F}$ is such that, for any $G \in \mathcal{F}$, we also have $G^* \in \mathcal{F}$, then we call $\mathcal{F}$ a 
 \textit{$k$-closed}
graph family. 

\begin{lemma}\label{obs good degree}
Let $H$ be a  critical $(m,n)$-relative clique of a $k$-closed family $\mathcal{F}$. 
Then any vertex in $H$ having degree $k$ or less is a good vertex. 
\end{lemma}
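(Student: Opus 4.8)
The plan is to argue by contradiction. Suppose some vertex $u$ of $H$ has degree at most $k$ but is a helper vertex, i.e.\ $u \in S$. Since $\mathcal{F}$ is $k$-closed, the graph $H^*$ obtained from $H$ by adding an adjacency between every pair of neighbors of $u$ that are non-adjacent in $und(H)$, and then deleting $u$, has its underlying graph in $\mathcal{F}$; I turn $H^*$ into an $(m,n)$-graph by giving each newly added adjacency an arbitrary type (the type will be irrelevant to the argument). I would then show that the relative clique $R$ of $H$ survives intact as a relative clique of $H^*$, which forces $\omega_{r(m,n)}(H^*) \geq |R| = \omega_{r(m,n)}(\mathcal{F})$ and contradicts the criticality of $H$.

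To see why this is a contradiction, note first that $|V(H^*)| = |V(H)| - 1$, so $(|V(H^*)|, |E(und(H^*))|) < (|V(H)|, |E(und(H))|)$ in the dictionary ordering, since the first coordinate already drops strictly. By property (iii) of a critical relative clique this yields $\omega_{r(m,n)}(H^*) < \omega_{r(m,n)}(\mathcal{F})$, precisely the inequality I aim to violate. Crucially, because $u \in S$ we have $R \subseteq V(H) \setminus \{u\} = V(H^*)$, so $R$ is at least a subset of the vertex set of $H^*$ and hence a candidate relative clique there.

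The heart of the proof is verifying that $R$ is genuinely a relative clique of $H^*$, for which I would invoke Lemma~\ref{lem 2path char}: it suffices to check that every pair of distinct $x,y \in R$ still sees one another in $H^*$. Fix such a pair; in $H$ they are adjacent or joined by a special $2$-path $xwy$. If at most one of $x,y$ is a neighbor of $u$, then no witnessing adjacency or special $2$-path between them can use $u$ (a special $2$-path through $u$ would require both endpoints to be adjacent to $u$), so the witness is untouched by the deletion and survives in $H^*$. If both $x$ and $y$ are neighbors of $u$, then in $H^*$ they are adjacent: either they were already adjacent in $und(H)$ and this adjacency is kept, or they were non-adjacent and the $k$-closed construction inserted an edge between them. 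In either case $x$ sees $y$ in $H^*$, so $R$ remains a relative clique, completing the contradiction and hence the proof.

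I expect the only real subtlety to be the bookkeeping around special $2$-paths whose internal vertex is $u$: these are exactly the witnesses destroyed by deleting $u$, and the entire purpose of the ``add edges between neighbors of $u$'' step is to replace each such witness by a direct adjacency. It is worth stressing that the \emph{type} assigned to the newly added adjacencies is immaterial, since Lemma~\ref{lem 2path char} requires only adjacency, not adjacency of a prescribed type, for two vertices to see one another; this is what lets the construction go through uniformly for every choice of $(m,n)$.
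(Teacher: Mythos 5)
Your proposal is correct and follows essentially the same route as the paper's own (much terser) proof: delete the low-degree helper, use $k$-closedness to add adjacencies among its neighbors, observe that $R$ survives as a relative clique via Lemma~\ref{lem 2path char}, and contradict criticality. Your extra care about the dictionary ordering (the vertex count drops even though the edge count may rise) and about which witnesses pass through $u$ fills in details the paper leaves implicit, but introduces no new idea.
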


\begin{proof}
If there is a helper $h$ of degree $k$ or less in $H$,  then we can delete $h$
 and make its non-adjacent neighbors   adjacent to each other by adding some extra arcs/edges to obtain $H^* \in \mathcal{F}$. Note that, $R$ is still an 
$(m,n)$-relative clique in $H^*$. This contradicts the fact that $H$ is a  critical $(m,n)$-relative clique. 
\end{proof}

\section{Graphs with bounded maximum degree}\label{Bounded Max Deg}
Let $\mathcal{G}_{\Delta}$ denote the family of graphs 
having maximum degree $\Delta$. It is known that~\cite{DBLP:conf/caldam/DasNS17} the $(m,n)$-chromatic number of 
$\mathcal{G}_{\Delta}$ is lower bounded by $(2m+n)^{\frac{\Delta}{2}}$ and for connected graphs, is also upper bounded by $2(\Delta-1)^{2m+n}(2m+n)^{\Delta -1} +2$. This means that 
the $(m,n)$-chromatic number of $\mathcal{G}_{\Delta}$ is exponential in $\Delta$.

However, while studying the values of the $(m,n)$-absolute and the $(m,n)$-relative clique numbers 
of $\mathcal{G}_{\Delta}$, we find that their values are of a drastically smaller order of $\Omega(\Delta^{2})$. 

More interestingly, we find that the values of the parameters are also connected to the famous degree-diameter problem. So, if $\nu(2,\Delta)$ denotes the maximum number of vertices of a graph of diameter $2$ and maximum degree $\Delta$, the connection is the following.

\begin{theorem}\label{th rel delta}
For all $(m,n) \neq (0,1)$, we have
\begin{enumerate}[(i)]
\item $\nu(2,\Delta) = \omega_{a(m,n)}({\mathcal{G}_\Delta}) \leq \omega_{r(m,n)}({\mathcal{G}_\Delta}) \leq \Delta^2 +1$ for all $\Delta < 2m+n$.

\item $\omega_{a(m,n)}({\mathcal{G}_\Delta}) \leq 
\omega_{r(m,n)}({\mathcal{G}_\Delta}) \leq \Delta^2 +1$  
for $\Delta = 2m+n$.

\item $\omega_{a(m,n)}({\mathcal{G}_\Delta}) \leq 
\omega_{r(m,n)}({\mathcal{G}_\Delta}) \leq \lfloor\frac{2m+n-1}{2m+n}\Delta^2\rfloor+\Delta+1$  
for all $\Delta > 2m+n$.
\end{enumerate}
 \end{theorem}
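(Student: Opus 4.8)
The two middle inequalities $\omega_{a(m,n)}(\mathcal{G}_\Delta) \le \omega_{r(m,n)}(\mathcal{G}_\Delta)$ in all three parts are instances of the general relation $\omega_{a(m,n)}(G) \le \omega_{r(m,n)}(G) \le \chi_{m,n}(G)$ recorded in the introduction, so the real content is the upper bounds on $\omega_{r(m,n)}(\mathcal{G}_\Delta)$ together with the degree--diameter identity in part (i). I would handle the three upper bounds by a single double-counting argument and then treat the identity separately.

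For the upper bounds, fix any $(m,n)$-graph $H$ with $und(H) \in \mathcal{G}_\Delta$ and a relative clique $R$ with $|R| = N := \omega_{r(m,n)}(H)$. By Lemma~\ref{lem 2path char}, every pair of $R$ is joined either by an edge/arc of $H[R]$ or by a special $2$-path, so
$$\binom{N}{2} \le |E(und(H)[R])| + \sum_{w \in V(H)} p(w),$$
where $p(w)$ counts special $2$-paths with middle vertex $w$ and both endpoints in $R$. Writing $r_w = |N(w) \cap R|$ and $r_w^\alpha = |N^\alpha(w) \cap R|$ over the $2m+n$ adjacency types $\alpha$, a special $2$-path through $w$ is exactly a pair of $R$-neighbors of $w$ that disagree on $w$, so $p(w) = \tfrac12\big(r_w^2 - \sum_\alpha (r_w^\alpha)^2\big)$. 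The plan is then to feed in $|E(und(H)[R])| \le \tfrac12 N\Delta$, the identity $\sum_w r_w = \sum_{v \in R} \deg(v) \le N\Delta$, and $r_w \le \Delta$, so that everything reduces to lower bounding $\sum_\alpha (r_w^\alpha)^2$.

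Here the two regimes appear. Using the trivial estimate $\sum_\alpha (r_w^\alpha)^2 \ge \sum_\alpha r_w^\alpha = r_w$ gives $\sum_w p(w) \le \tfrac12(\Delta-1)\sum_w r_w \le \tfrac12(\Delta-1)N\Delta$, and after substitution and division by $N$ one gets $N \le \Delta^2 + 1$, the bound of parts (i) and (ii). Using instead the power-mean estimate $\sum_\alpha (r_w^\alpha)^2 \ge r_w^2/(2m+n)$ gives $\sum_w p(w) \le \tfrac12\tfrac{2m+n-1}{2m+n}\sum_w r_w^2 \le \tfrac12\tfrac{2m+n-1}{2m+n}N\Delta^2$, and division by $N$ yields $N \le \tfrac{2m+n-1}{2m+n}\Delta^2 + \Delta + 1$; as $N$ and $\Delta+1$ are integers this sharpens to the floor bound of part (iii). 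A one-line computation shows the second estimate beats $\Delta^2+1$ exactly when $\Delta > 2m+n$, which explains the case split.

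It remains to prove $\omega_{a(m,n)}(\mathcal{G}_\Delta) = \nu(2,\Delta)$ in part (i). For ``$\le$'', I would argue that if $A$ is an absolute clique then $V(A)$ is a relative clique: otherwise a non-adjacent pair $u,v$ would agree on every common neighbor and could be identified, producing a homomorphism to a smaller graph and contradicting $\chi_{m,n}(A) = |V(A)|$; Lemma~\ref{lem 2path char} then puts every pair of $V(A)$ at distance at most $2$, so $und(A)$ has diameter at most $2$ and $|V(A)| \le \nu(2,\Delta)$. For ``$\ge$'', take $G_0$ of diameter $2$, maximum degree $\Delta$ and $\nu(2,\Delta)$ vertices; since $\Delta < 2m+n$ I want to color and orient $G_0$ as an $(m,n)$-graph with $|N^\alpha(w)| \le 1$ for all $w,\alpha$, which I would get from a decomposition of $und(G_0)$ into $m$ subgraphs of maximum degree at most $2$ (each oriented into directed paths/cycles and given one arc label, so each vertex carries at most one in- and one out-arc of that label) together with $n$ matchings (each given one edge type), the total capacity $2m+n$ exceeding $\Delta$. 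With this coloring every distance-$2$ pair lies on a special $2$-path, so $V(G_0)$ is a relative clique, forcing $\chi_{m,n}(G_0)=|V(G_0)|$ and hence $G_0$ to be an absolute clique. The step I expect to be the main obstacle is making this degree-constrained colored orientation rigorous and pinning down precisely why strict $\Delta < 2m+n$ (rather than $\Delta \le 2m+n$) is what the decomposition needs; by contrast the counting behind the three upper bounds is routine once the formula for $p(w)$ is established.
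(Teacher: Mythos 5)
Your proposal is correct, and it reaches the same three bounds as the paper while repackaging the arguments. The paper splits the upper bounds into two separate lemmas: a purely local count (Lemma~\ref{lem delta2}: every vertex of $R$ lies in $N[u]$ or is adjacent to some vertex of $N(u)$, giving $1+\Delta+\Delta(\Delta-1)$) for parts (i)--(ii), and a discharging argument (Lemma~\ref{lem discharging}) showing that $G^2$ is $\bigl(\lfloor\tfrac{(2m+n-1)\Delta^2}{2m+n}\rfloor+\Delta\bigr)$-degenerate for part (iii). Your single double count of $\binom{N}{2}$ against edges of $und(H)[R]$ plus special $2$-paths is a streamlined version of the latter: the paper's discharging is really an edge count of $G^2[X]$ for arbitrary $X$ (yielding degeneracy, hence a clique bound), and your key estimate $\sum_\alpha (r_w^\alpha)^2\ge r_w^2/(2m+n)$ is exactly the maximization step behind the paper's bound $\pi(x)\le\lfloor\binom{p}{2}k^2/p^2\rfloor$ with $p=2m+n$; applying the count only to $X=R$ loses nothing for the theorem, and your observation that the crude estimate $\sum_\alpha(r_w^\alpha)^2\ge r_w$ recovers $\Delta^2+1$ makes the case split transparent. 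For part (i), your identification argument for ``$\le$'' is precisely the content of Lemma~\ref{lem 2path char}, and the ``main obstacle'' you flag for ``$\ge$'' is resolved exactly as in the paper's Lemma~\ref{lem deg-diam}: by Vizing's theorem $und(G_0)$ is properly $(\Delta+1)$-edge-colorable, and since $\Delta+1\le 2m+n$ one can group $2m_1$ color classes into $m_1\le m$ subgraphs of maximum degree $2$ (oriented into directed paths/cycles with one arc label each) and keep the remaining $n_1=\Delta+1-2m_1$ classes as labeled matchings; this is why the hypothesis is the strict inequality $\Delta<2m+n$, and the equality genuinely fails at $\Delta=2m+n$ for class-two graphs such as the Petersen graph (compare Lemma~\ref{lem 23 is 8}).
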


The proof of the above result will be contained across several lemmas in this section. 

\begin{lemma}\label{lem delta2}
For all $(m,n) \neq (0,1)$ and all $\Delta \geq 0$, 
we have $\omega_{r(m,n)}({\mathcal{G}_\Delta}) \leq \Delta^2 +1$.
\end{lemma}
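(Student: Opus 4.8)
The plan is to exploit the special-2-path characterization of relative cliques from Lemma~\ref{lem 2path char} and reduce the problem to counting vertices in a radius-2 ball. Let $G$ be any $(m,n)$-graph with $und(G) \in \mathcal{G}_\Delta$, and let $R$ be an $(m,n)$-relative clique of $G$. If $R$ is empty or a singleton the bound is trivial, so assume $|R| \geq 2$ and fix an arbitrary vertex $u \in R$. By Lemma~\ref{lem 2path char}, every other vertex $v \in R$ is either adjacent to $u$ or connected to $u$ by a special $2$-path; in both cases $v$ lies at distance at most $2$ from $u$ in $und(G)$. Hence $R \setminus \{u\}$ is contained in the set of vertices at distance exactly $1$ or $2$ from $u$.

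The core of the argument is then a routine ball-counting estimate. First I would bound the number of vertices at distance exactly $1$ from $u$: these are neighbors of $u$, so there are at most $\Delta$ of them. Next I would bound the vertices at distance exactly $2$: each such vertex is a neighbor of some neighbor $w$ of $u$, and since $w$ has at most $\Delta$ neighbors, one of which is $u$ itself, each $w$ contributes at most $\Delta - 1$ vertices at distance $2$. As there are at most $\Delta$ choices for $w$, the number of distance-$2$ vertices is at most $\Delta(\Delta - 1)$. (Overcounting when a vertex is reachable through several intermediate vertices only helps, since we are upper-bounding.) Combining these gives
\[
|R| \leq 1 + \Delta + \Delta(\Delta - 1) = \Delta^2 + 1,
\]
which is exactly the claimed bound.

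I do not expect a genuine obstacle here: the estimate is a direct consequence of the distance-$2$ reformulation, and the $1 + \Delta + \Delta(\Delta-1)$ count is the standard Moore-type bound for the size of a radius-$2$ neighborhood in a graph of maximum degree $\Delta$. The only points requiring care are the degenerate cases $\Delta \in \{0,1\}$ (handled by the $|R|\leq 1$ or $|R|\leq 2$ observations above, both consistent with $\Delta^2+1$) and noting that the hypothesis $(m,n) \neq (0,1)$ is inherited from the ambient theorem but is not actually needed for this upper bound, since Lemma~\ref{lem 2path char} and the counting argument are valid for every $(m,n)$. It is worth remarking that the tightness of this estimate is precisely what motivates the degree-diameter connection: equality forces the distance-$\leq 2$ ball around $u$ to be saturated, i.e.\ to realize $\nu(2,\Delta)$ many vertices, which foreshadows the matching lower bound in the remaining lemmas of this section.
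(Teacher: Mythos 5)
Your proof is correct and follows essentially the same route as the paper: fix $u \in R$, use the adjacency/special-$2$-path characterization to place every other vertex of $R$ within distance $2$ of $u$, and apply the Moore-type count $1 + \Delta + \Delta(\Delta-1) = \Delta^2 + 1$. Your extra remarks on the degenerate cases and on the dispensability of the hypothesis $(m,n)\neq(0,1)$ are accurate but not needed.
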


\begin{proof}
Let $G \in \mathcal{G}_\Delta$ with an $(m,n)$-relative clique $R$. It is enough to show that $|R| \leq \Delta^2 +1$.

Take any $u \in R$. As $\Delta$ is the maximum degree of $G$, $|N(u)| \leq \Delta$. 
Moreover, each vertex of $R \setminus N[u]$ must be adjacent to a vertex of $N(u)$ in order to see $u$. Observe that, any vertex of $N(u)$ can have at most  $(\Delta-1)$ neighbors other than $u$. Therefore, 
$$|R| \leq 1+|N(u)| + |\cup_{v \in N(u)} (N(v) \setminus \{u\})| \leq (\Delta +1) + \Delta(\Delta-1) = \Delta^2+1.$$ 
That completes the proof. 
\end{proof}

Given an $(m,n)$-graph $G$, let $G^2$ be the simple graph with $V(G^2)=V(G)$ and 
$E(G^2)= \{uv: \text{$u$ sees $v$ in $G$}\}$. 
Moreover, for any graph $G$ and any $X \subseteq V(G)$, we write $G[X]$ to denote the subgraph of $G$ induced by $X$. Then, we have,

 \begin{lemma}\label{lem discharging}
If $G$ is an $(m,n)$-colored mixed graph with maximum degree $ \Delta$, then 
$G^2$ is $ \Big( \lfloor{\frac{(2m+n-1)\Delta^2}{2m+n}}\rfloor+\Delta \Big)$-degenerate for all $(m,n) \neq (0,1)$.
 \end{lemma}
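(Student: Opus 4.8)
The plan is to bound the degeneracy of $G^2$ via a discharging / counting argument that distributes the "seeing" relationships across the $2m+n$ adjacency types available at each vertex. Recall that $uv \in E(G^2)$ means $u$ sees $v$, i.e. $u$ and $v$ are adjacent or connected by a special $2$-path. To show $G^2$ is $d$-degenerate with $d = \lfloor \frac{(2m+n-1)\Delta^2}{2m+n} \rfloor + \Delta$, it suffices to show that every nonempty subgraph of $G^2$ has a vertex of degree at most $d$; and since any subgraph of $G^2$ arises from an induced subgraph $G[X]$ (whose square $G[X]^2$ sits inside $G^2[X]$), it is enough to exhibit, in any $(m,n)$-graph $G$ of maximum degree $\Delta$, a vertex $u$ that sees at most $d$ other vertices. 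Equivalently, I want to show $\min_{u} \deg_{G^2}(u) \le d$, since passing to induced subgraphs only lowers degrees and preserves the maximum-degree-$\Delta$ hypothesis.

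First I would fix a vertex $u$ and count the vertices it sees. The neighbors of $u$ in $und(G)$ contribute at most $\Delta$ to $\deg_{G^2}(u)$; this accounts for the additive $+\Delta$ term. The remaining vertices seen by $u$ are reached through special $2$-paths $uwv$ with $\sigma(uw) \neq \sigma(wv)$, where $w \in N(u)$. Each neighbor $w$ has at most $\Delta - 1$ neighbors other than $u$, so a crude bound gives $\Delta(\Delta-1)$ such second-neighbors, recovering the weaker $\Delta^2+1$ of Lemma~\ref{lem delta2}. The point of the sharper bound is that not all of these genuinely produce a \emph{special} $2$-path: the type $\sigma(uw) = \alpha$ is fixed for each $w$, and a second-neighbor $v$ reached from $w$ is only \emph{seen} by $u$ through $w$ if $\sigma(wv) \neq \alpha$.

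The key step, and the one I expect to be the main obstacle, is turning this observation into the factor $\frac{2m+n-1}{2m+n}$. The natural idea is to sum over all neighbors $w \in N(u)$ and, for each, to discard the $w$-neighbors joined by an arc/edge of the \emph{same} type $\sigma(uw)$, since those do not create a special $2$-path through $w$. One wants to argue that, on average across the $2m+n$ possible types, the adjacencies at the neighbors $w$ that fail to be special constitute a $\frac{1}{2m+n}$ fraction, so that at most a $\frac{2m+n-1}{2m+n}$ fraction of the $\Delta(\Delta-1)$ potential second-neighbor slots are usable. The delicate part is that a single vertex $v$ may be a second-neighbor via several distinct neighbors $w$, so the naive per-neighbor count overcounts; the argument must either be set up as a weighted discharging scheme on $G^2$ itself — assigning initial charge to vertices/edges and redistributing so that some vertex ends with charge at most $d$ — or must carefully choose $u$ (e.g. a vertex minimizing a suitable potential, or one whose incident type-distribution is the least favorable for creating special paths). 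The floor function in the statement signals that an averaging inequality over the $2m+n$ types is taken, followed by rounding, so I would phrase the final inequality as
\[
\deg_{G^2}(u) \;\le\; \Delta + \Big\lfloor \tfrac{(2m+n-1)\Delta^2}{2m+n} \Big\rfloor .
\]

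Finally, I would close the degeneracy argument by iterating: having located in $G$ a vertex $u$ seen by at most $d$ others, I remove $u$ and repeat on the induced subgraph $G[V(G) \setminus \{u\}]$, which still has maximum degree at most $\Delta$ and whose square is the corresponding induced subgraph of $G^2$ minus $u$. Since every induced subgraph of $G$ furnishes such a low-degree vertex in its square, every subgraph of $G^2$ has a vertex of degree at most $d$, which is exactly the definition of $d$-degeneracy. The only genuine content beyond bookkeeping is the type-averaging bound of the middle paragraph; the hypothesis $(m,n) \neq (0,1)$ is what guarantees $2m+n \ge 2$ so that the fraction $\frac{2m+n-1}{2m+n}$ is a nontrivial (strictly positive, strictly-less-than-one) savings over the trivial $\Delta^2$ count.
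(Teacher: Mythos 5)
The step you yourself flag as ``the main obstacle'' --- converting the specialness requirement into the factor $\frac{2m+n-1}{2m+n}$ --- is the entire content of the lemma, and your proposal leaves it unproven. Moreover, the local form in which you hope to prove it is false: it is not true that for an arbitrary fixed vertex $u$ the non-special $2$-paths discard a $\frac{1}{2m+n}$ fraction of the second-neighbor slots. For instance, with $p=2m+n=2$, give all $\Delta$ edges at $u$ type $1$ and give every neighbor's remaining $\Delta-1$ edges type $2$, with all second-neighbors distinct; then $u$ sees $\Delta^2$ vertices, which exceeds $\lfloor\Delta^2/2\rfloor+\Delta$ for $\Delta>2$. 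So no pointwise bound on $\deg_{G^2}(u)$ of the claimed form can hold, and the inequality is only true \emph{on average}. The paper supplies exactly the missing global mechanism: for any $X\subseteq V(G)$, each vertex of $X$ gets charge $\frac{\Delta^2}{p}$ and sends $\frac{\Delta}{p}$ along each underlying edge, so a middle vertex $x$ with $k$ neighbors in $X$ ends with charge at least $\frac{k\Delta}{p}$, while the number of special $2$-paths through $x$ joining two vertices of $X$ is at most $\sum_{\beta<\gamma}|N^{\beta}(x)|\cdot|N^{\gamma}(x)|\le\binom{p}{2}\frac{k^2}{p^2}=\frac{(p-1)k^2}{2p}$, i.e.\ at most $\frac{p-1}{2}$ times its charge. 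Summing over all middle vertices and adding the at most $\frac{\Delta}{2}|X|$ edges inherited from $G[X]$ bounds $|E(G^2[X])|$ by $\bigl(\frac{p-1}{p}\Delta^2+\Delta\bigr)\frac{|X|}{2}$, whence some vertex has degree at most the average. The balanced-partition estimate (cross-type pairs among $k$ neighbors split into $p$ classes are maximized when the split is even) is the averaging inequality you allude to but never carry out.

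There is also a gap in your reduction to degeneracy. An induced subgraph $G^2[X]$ is in general strictly larger than $(G[X])^2$, because two vertices of $X$ may see each other through an internal vertex lying outside $X$; likewise $G^2-u$ need not equal $(G-u)^2$. So exhibiting one vertex of small degree in $G^2$ and iterating on $G-u$ does not establish that every subgraph of $G^2$ has a low-degree vertex. You must, as the paper does, run the count directly on $G^2[X]$ for arbitrary $X$, allowing the middle vertices of the special $2$-paths to range over all of $V(G)$.
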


 \begin{proof}
 For convenience, let us assume that $p=2m+n$. Thus, it is enough to prove that, 
 for any $X \subseteq V(G)=V(G^2)$, the subgraph $G^2[X]$ has at least one vertex of degree less than or equal to  $(\lfloor{\frac{(p-1)\Delta^2}{p}}\rfloor+\Delta)$. We prove this using discharging.

 Assume that the initial charge $ ch(x)$ of a vertex $x$ of $G^2$ is
$$
ch(x)=
\begin{cases}
\frac{\Delta^2}{p} &\text{ if }  x \in X,\\
0  &\text{ if }  x \not\in X.
\end{cases}
$$
Thus, the total charge of the graph is 
$\sum_{x \in V(G^2)} ch(x) = \frac{\Delta^2|X|}{p}$.

Next we are going to present the discharging rule.

\medskip

 \textbf{(R1):}  Every vertex of $X$ donates a charge of $ \frac{\Delta}{p}$ to each of its neighbors in $G^2$.

\medskip

Let $ ch^*(x)$ be the updated charge of the vertices of $G^2$ after applying (R1). 
Observe that a vertex $x$ with $k$ neighbors in $X$ has a charge 
$ ch^*(x) \geq \frac{k \Delta}{p}$ as it has received $\frac{\Delta}{p}$ charge from each of its $k$ neighbors belonging to $X$ by (R1).

Let $\pi(x)$ be the number of special $2$-paths going through a degree $k$ vertex $x$ and linking two vertices in $X$. Let $\beta, \gamma \in \{-m,-(m-1), \cdots, -1, 1, 2, \cdots, m+n\}$ be distinct. Then, the number of special $2$-paths through $x$ due to its $\beta$-neighbors and $\gamma$-neighbors is
equal to $|N^{\beta}(x)|\cdot |N^{\gamma}(x)|$. Therefore, the total number of special $2$-paths through 
$x$ is equal to  
$$\pi(x) \leq  \sum  |N^{\beta}(x)|\cdot |N^{\gamma}(x)|,$$
where $\beta < \gamma$ are two of the $p$ symbols  
from $\{-m,-(m-1), \cdots, -1, 1, 2, \cdots, m+n\}$. 

Observe that, by maximizing the sum, we can conclude that 
$$\pi(x) \leq \bigg\lfloor {p \choose 2} \frac{k^2}{p^2} \bigg\rfloor = \bigg \lfloor \frac{(p-1)k^2}{2p} \bigg \rfloor . $$

Since $k \leq \Delta$ and $ch^*(x) \geq \frac{k \Delta}{p}$, we have
$$ \pi(x)\leq \bigg \lfloor \frac{(p-1)k^2}{2p}\bigg \rfloor \leq \frac{(p-1)k\Delta}{2p}\leq \frac{(p-1)ch^*(x)}{2}.$$

Note that, the above equation provides an upper bound on the number of  edges  in $G^2[X]$ 
that are there
due to special $2$-paths through $x$. On the other hand, by the Handshaking lemma, $G[X]$ has at most 
$\frac{\Delta}{2}|X|$ edges, those which $G^2[X]$ inherits from $G[X]$. Furthermore, as the total charge of the graph is constant, we have 
$\sum_{x \in V(G^2)} ch(x) =  \sum_{x \in V(G^2)} ch^*(x)$. Hence, the total number of edges in $G^2[X]$ will be at most 
$$\frac{\Delta}{2}|X| + \frac{p-1}{2}\sum_{x \in V(G^2)}ch^*(x) = 
\left(\frac{p-1}{p}\cdot\Delta^2+\Delta \right)\frac{|X|}{2}.$$

Thus, there exists at least one vertex in $G^2[X]$ having degree at most 
$$\left(\frac{p-1}{p}\cdot\Delta^2+\Delta \right).$$

This completes the proof. 
 \end{proof}

The above result is a generalization of a result due to Gon{\c{c}}alves, Raspaud and Shalu~\cite{shalu}. Next we are going to show that, indeed, we have 
$\nu(2,\Delta) = \omega_{a(m,n)}({\mathcal{G}_\Delta})$ for all $\Delta < 2m+n$.

\begin{lemma}\label{lem deg-diam}
For all $\Delta < 2m+n$, we have $\nu(2,\Delta) = \omega_{a(m,n)}({\mathcal{G}_\Delta})$.
\end{lemma}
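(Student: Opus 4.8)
The plan is to establish the equality by proving the two inequalities $\omega_{a(m,n)}(\mathcal{G}_\Delta) \le \nu(2,\Delta)$ and $\nu(2,\Delta) \le \omega_{a(m,n)}(\mathcal{G}_\Delta)$ separately, writing $p = 2m+n$ throughout.

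For the first inequality I would start from an $(m,n)$-absolute clique $A$ with $und(A) \in \mathcal{G}_\Delta$ realizing $\omega_{a(m,n)}(\mathcal{G}_\Delta)$. Since $\chi_{m,n}(A) = |V(A)|$, the whole set $V(A)$ is an $(m,n)$-relative clique of $A$, so by Lemma~\ref{lem 2path char} every pair of distinct vertices of $A$ is either adjacent or joined by a special $2$-path. In $und(A)$ this forces every pair of vertices to be at distance at most $2$; hence $und(A)$ is a graph of diameter at most $2$ and maximum degree at most $\Delta$, which yields $|V(A)| \le \nu(2,\Delta)$. I note that this direction uses no hypothesis on $\Delta$.

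For the reverse (and harder) inequality I would take a graph $G$ of diameter $2$ and maximum degree at most $\Delta$ on exactly $\nu(2,\Delta)$ vertices, and equip its edges with a labelling $\sigma$ (choosing arc types, their directions, and edge types) so that the resulting $(m,n)$-graph $A$ becomes an absolute clique with $und(A)=G$. Since adjacent vertices already see one another and, as $G$ has diameter $2$, every non-adjacent pair has a common neighbour, it suffices to make \emph{every} $2$-path of $G$ special: then every pair of vertices sees each other, $V(A)$ is a relative clique of full size, and $\chi_{m,n}(A)=|V(A)|$. Observing that the $2$-path $uwv$ is special, i.e. $\sigma(uw)\ne\sigma(vw)$, exactly when $\sigma(wu)\ne\sigma(wv)$, I reduce the task to producing a $\sigma$ for which, at every vertex $w$, the neighbours of $w$ receive pairwise distinct adjacency types as seen from $w$, the types ranging over the $p$ symbols $\{-m,\dots,-1,1,\dots,m+n\}$.

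The main obstacle is the \emph{global} consistency of such a labelling: the type of an arc seen from one endpoint is the negative of the type seen from the other, so one cannot assign types independently vertex by vertex, and this is precisely where $\Delta < p$ must be used. I would first apply Vizing's theorem to properly edge-colour $und(G)$ with at most $\Delta+1 \le p$ colour classes, each of which is a matching. I would then distribute these matchings among the $m$ arc types and the $n$ edge types, where each arc type may absorb up to two matchings (their union has maximum degree at most $2$, hence is a disjoint union of paths and cycles, which I orient consistently so that every vertex is incident to at most one arc of that type in each direction, producing distinct types $+k$ and $-k$), while each edge type absorbs at most one matching. The total capacity $2m+n=p$ is at least the number $\Delta+1$ of classes exactly when $\Delta < p$, so the distribution succeeds; at every vertex all incident arcs/edges then carry distinct types, every $2$-path is special, and $A$ is the desired absolute clique, giving $\nu(2,\Delta)\le \omega_{a(m,n)}(\mathcal{G}_\Delta)$.
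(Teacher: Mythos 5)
Your proposal is correct and follows essentially the same route as the paper: the upper bound via the diameter-$2$ observation, and the lower bound via Vizing's theorem, pairing up colour classes into degree-at-most-$2$ subgraphs that are oriented as directed paths/cycles for the arc types and using single matchings for the edge types, with the capacity count $2m+n \ge \Delta+1$ being exactly where $\Delta < 2m+n$ enters. The only cosmetic difference is that the paper fixes the split in advance by choosing $m_1, n_1$ with $2m_1+n_1=\Delta+1$, whereas you phrase it as a capacity argument; the content is identical.
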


\begin{proof}
Notice that there must exist an $(m,n)$-absolute clique 
$H \in \mathcal{G}_{\Delta}$ satisfying $|V(H)| = \omega_{a(m,n)}(\mathcal{G}_{\Delta})$. Therefore, $und(H)$ is a graph with diameter at most $2$ and maximum degree at most $\Delta$. Hence, 
$\omega_{a(m,n)}(\mathcal{G}_{\Delta}) = |V(H)| \leq \nu(2,\Delta)$.

On the other hand, there exists an undirected graph $U$ on $\nu(2,\Delta)$ vertices having diameter $2$ and maximum degree $\Delta$. We will construct an $(m,n)$-absolute clique $U^*$ whose underlying graph is $U$.

Observe that, $U$ is $(\Delta+1)$-edge colorable due to Vizing's Theorem~\cite{D.B.West}. Consider a proper edge coloring of $U$ using $1,2,\cdots, \Delta+1$. Notice that, as $2m+n > \Delta$, it is possible to find 
non-negative integers $m_1 \leq m$ and $n_1 \leq m+n-m_1$ satisfying $2m_1+n_1 = \Delta+1$.

Now, for each $i \leq m_1$, consider the subgraph of $U$ induced by the edge colors $(2i-1)$ and $2i$. For convenience, call it $U_i$. Notice that, each vertex of $U_i$ has degree at most $2$. This means that $U_i$ is a disjoint union of paths and cycles. Now, replace each of these paths/cycles by a directed path/cycle having arcs of label $i$ to (partially) construct $U^*$. 
Moreover, for $m_1 < j \leq m_1+n_1$, consider the subgraph $U_j$ induced by the edge color $2m_1+j$. 
The graph $U_j$ is a matching. We replace each edge of $U_j$ by an arc/edge of label $j$ (orientation of arcs can be chosen randomly) to construct $U^*$. 

Observe that, the $U^*$, so obtained, is an $(m,n)$-graph. 
Moreover, every vertex $x$ of $U^*$ has $\sigma(xy) \neq \sigma(xz)$ for distinct $y, z \in N(x)$. Therefore, $U^*$ is indeed an $(m,n)$-absolute clique having $und(U^*) = U$. 
 \end{proof}

Now, we are ready to prove Theorem~\ref{th rel delta}.

 \noindent \textit{Proof of Theorem~\ref{th rel delta}.}
The result follows directly from  Lemmas~\ref{lem delta2}, \ref{lem discharging}, and~\ref{lem deg-diam}. \qed
 \bigskip

 Das, Prabhu and Sen~\cite{das2018study} had shown that 
 $\omega_{a(1,0)}({\mathcal{G}_\Delta}) = \Omega(\Delta^2)$. 
 They proved the lower bound using examples. The same examples will imply a quadratic (in $\Delta$)
 lower bound for $\omega_{a(m,n)}({\mathcal{G}_\Delta})$ for all $(m,n)$ where $m \geq 1$. 
 Therefore, together with Theorem~\ref{th rel delta}, we can conclude that 
 $\omega_{a(m,n)}({\mathcal{G}_\Delta}) = \Theta(\Delta^2)$
 and $\omega_{r(m,n)}({\mathcal{G}_\Delta}) = \Theta(\Delta^2)$ whenever $m \geq 1$.

\section{Subcubic graphs}\label{Subcubic graph}
 In this section, we study the same  parameters for subcubic graphs and we manage to provide tight bounds in each case. That is, this section is dedicated to a special case $\Delta =3$ of the previous section.

   \begin{theorem}\label{th rel subcubic}
 For all $(m,n) \neq (0,1)$ we have 
 \begin{enumerate}[(i)]
\item $\omega_{a(1,0)}({\mathcal{G}_3}) = \omega_{r(1,0)}({\mathcal{G}_3}) = 7$~\cite{das2018study}.

\item $\omega_{a(0,n)}({\mathcal{G}_3}) = \omega_{r(0,n)}({\mathcal{G}_3}) = 8$ for $n = 2,3$.

\item $\omega_{a(m,n)}({\mathcal{G}_3}) = \omega_{r(m,n)}({\mathcal{G}_3}) = 10$ for $(m,n) = (1,1)$ and for $2m+n \geq 4$. 
\end{enumerate}
 \end{theorem}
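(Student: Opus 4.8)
The plan is to prove, for every listed $(m,n)$, matching lower and upper bounds on $\omega_{r(m,n)}(\mathcal{G}_3)$, and to arrange each lower bound so that it is witnessed by a genuine $(m,n)$-absolute clique; since $\omega_{a(m,n)} \le \omega_{r(m,n)}$ always holds, this squeezes both parameters to the same value. Two of the three upper bounds come for free from the previous section. Lemma~\ref{lem delta2} with $\Delta = 3$ gives $\omega_{r(m,n)}(\mathcal{G}_3) \le 10$, which is exactly what part (iii) requires. Lemma~\ref{lem discharging} with $p = 2m+n$ and $\Delta = 3$ says that for any subcubic $(m,n)$-graph $G$ the square $G^2$ is $\big(\lfloor \frac{(p-1)9}{p}\rfloor + 3\big)$-degenerate; for $(0,2)$ we have $p = 2$, so this degeneracy equals $7$, and since by Lemma~\ref{lem 2path char} a relative clique $R$ is precisely a clique of $G^2$ while a $d$-degenerate graph has no clique larger than $d+1$, we obtain $\omega_{r(0,2)}(\mathcal{G}_3) \le 8$. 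The only upper bound not handed to us by a generic lemma is the bound $8$ for $(0,3)$: there the same computation gives degeneracy $9$, hence only $\omega_{r(0,3)}(\mathcal{G}_3) \le 10$. This case is the crux, discussed below.

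For the lower bounds I would argue as follows. When $2m+n \ge 4$ we have $\Delta = 3 < 2m+n$, so Lemma~\ref{lem deg-diam} gives $\omega_{a(m,n)}(\mathcal{G}_3) = \nu(2,3)$, and since the Petersen graph is a degree-$3$, diameter-$2$ graph on $10$ vertices (attaining the Moore bound $\Delta^2+1 = 10$), we have $\nu(2,3) = 10$; together with the upper bound this settles part (iii) for $2m+n \ge 4$. The boundary case $(m,n) = (1,1)$ has $2m+n = 3 = \Delta$, so Lemma~\ref{lem deg-diam} no longer applies and I would build a $(1,1)$-absolute clique on $10$ vertices by hand: take the Petersen graph, fix a perfect matching $M$, assign the (single) edge type to the edges of $M$, and orient the complementary $2$-factor — which, as Petersen has girth $5$ and is non-Hamiltonian, is a disjoint union of two $5$-cycles — as two directed cycles of arcs. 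Then every vertex has exactly one in-arc, one out-arc and one edge, so its three neighbors agree on three pairwise distinct adjacency types; hence every $2$-path is special, and since Petersen has diameter $2$, every pair of vertices is adjacent or joined by a special $2$-path, making this a $(1,1)$-absolute clique of order $10$. For part (ii) I would exhibit a single $(0,2)$-absolute clique on $8$ vertices: the Wagner graph (the M\"obius ladder on $8$ vertices) on $\mathbb{Z}_8$, with the $8$-cycle $0\,1\cdots 7\,0$ colored by alternating the two colors and the spokes colored so that $\{0,4\},\{2,6\}$ receive one color and $\{1,5\},\{3,7\}$ the other. A short orbit check — the rotation $i \mapsto i+2$ is a color-automorphism, so only the distance-$2$ pairs from two representative vertices need inspection — verifies that every distance-$2$ pair admits a special $2$-path, and the Wagner graph has diameter $2$. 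This yields $\omega_{a(0,2)}(\mathcal{G}_3) \ge 8$; reading the same object as a $(0,3)$-graph that simply omits the third color preserves every special $2$-path, so it is also a $(0,3)$-absolute clique and gives $\omega_{a(0,3)}(\mathcal{G}_3) \ge 8$.

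Assembling these pieces, parts (i)--(iii) reduce to the single genuinely hard statement $\omega_{r(0,3)}(\mathcal{G}_3) \le 8$, which I expect to be the main obstacle. The plan here is to pass to a critical $(0,3)$-relative clique $H$, so that Lemmas~\ref{obs S is indep} and~\ref{obs good degree} constrain its structure, and to suppose for contradiction that $|R| \in \{9,10\}$. Inspecting the equality regime of the discharging argument of Lemma~\ref{lem discharging} for $p = 3$, $\Delta = 3$, one sees that forcing $9$ or $10$ mutually-seeing good vertices requires essentially every good vertex of degree $3$ to carry its three incident edges in three distinct colors, that is, the coloring must locally behave like a proper $3$-edge-coloring while the underlying graph is an extremal (Petersen-like) subcubic diameter-$2$ graph. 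The contradiction I would extract is that no such configuration exists: a true $10$-vertex instance would force the Petersen graph to be properly $3$-edge-colorable, which is false since Petersen is a snark, and the size-$9$ subcase is eliminated by an analogous but more delicate neighborhood count. Turning the ``locally proper $3$-edge-coloring'' heuristic into a rigorous global obstruction, and in particular closing the $|R| = 9$ subcase cleanly, is the step I expect to demand the most care.
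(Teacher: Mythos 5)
Most of your proposal tracks the paper's proof closely and correctly. The upper bound $10$ for part (iii) is Lemma~\ref{lem delta2}; the lower bound for $2m+n\ge 4$ via Lemma~\ref{lem deg-diam} and $\nu(2,3)=10$ (Petersen) is exactly the paper's route; and your hand-built $(1,1)$-absolute clique (perfect matching as edges, the complementary two $5$-cycles as directed cycles) is precisely the configuration the paper draws in its Figure~1(a). Your Wagner-graph witness for $(0,2)$ matches the paper's Figure~1(b) in spirit, and your explicit coloring checks out. One genuinely different (and self-contained) move is deriving $\omega_{r(0,2)}(\mathcal{G}_3)\le 8$ from Lemma~\ref{lem discharging} with $p=2$, $\Delta=3$ (degeneracy $7$ of $G^2$, hence clique number at most $8$ via Lemma~\ref{lem 2path char}); the paper instead gets this by monotonicity from the $(0,3)$ bound. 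Either works.

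The genuine gap is exactly where you flag it: $\omega_{r(0,3)}(\mathcal{G}_3)\le 8$ is asserted but not proved, and without it part (ii) fails for $n=3$. Your sketch for excluding $|R|=10$ is essentially the paper's argument in Lemma~\ref{lem 23 is 8} (a good vertex sees at most $3+6=9$ others, so $|R|=10$ forces $S=\emptyset$ and $H$ cubic of diameter $2$ on $10$ vertices, hence Petersen; uniqueness of $2$-paths in Petersen then forces a proper $3$-edge-coloring, contradicting that Petersen is class~2), but you do not need the ``equality regime of the discharging argument'' for this --- the direct neighborhood count from the proof of Lemma~\ref{lem delta2} already pins down the structure. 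More importantly, the $|R|=9$ case is left entirely open in your write-up. The paper's closing move there is a parity argument: since a good vertex of degree $d$ sees at most $1+3d$ vertices of $R$ (itself included), $|R|=9$ forces every good vertex to have degree exactly $3$, and (after arguing $S=\emptyset$ using criticality, Lemmas~\ref{obs S is indep} and~\ref{obs good degree}) $H$ would be a cubic graph on $9$ vertices, which the handshaking lemma forbids. You need to supply an argument of this kind --- in particular, a reason why helpers cannot survive in a $9$-good-vertex configuration --- before the $(0,3)$ upper bound, and hence part (ii), is established.
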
 
 
 We prove Theorem \ref{th rel subcubic} in a series of lemmas.

\begin{lemma}\label{lem big mn}
For $(m,n) = (1,1)$ and for $2m+n \geq 4$, we have 
$\nu(2,3) = \omega_{a(m,n)}({\mathcal{G}_3}) = \omega_{r(m,n)}({\mathcal{G}_3}) = 10$. 
\end{lemma}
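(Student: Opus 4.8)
The plan is to pin down the common value $10$ by sandwiching all three quantities between two matching bounds. The upper bound $\omega_{r(m,n)}(\mathcal{G}_3) \leq \Delta^2 + 1 = 10$ comes for free from Lemma~\ref{lem delta2} with $\Delta = 3$, and since $\omega_{a(m,n)} \leq \omega_{r(m,n)}$ always holds, it suffices to produce the lower bound $\omega_{a(m,n)}(\mathcal{G}_3) \geq 10$. The value $\nu(2,3) = 10$ I would establish independently: the Moore bound for diameter $2$ and maximum degree $3$ gives $\nu(2,3) \leq 1 + 3 + 3 \cdot 2 = 10$, and the Petersen graph $P$, being $3$-regular of diameter $2$ on $10$ vertices, witnesses equality.

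First I would dispose of the regime $2m+n \geq 4$, which is the easy half. Here $\Delta = 3 < 2m+n$, so Lemma~\ref{lem deg-diam} applies and gives $\nu(2,3) = \omega_{a(m,n)}(\mathcal{G}_3)$. Combined with $\nu(2,3) = 10$ and the upper bound above, the chain $10 = \nu(2,3) = \omega_{a(m,n)}(\mathcal{G}_3) \leq \omega_{r(m,n)}(\mathcal{G}_3) \leq 10$ closes this case.

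The genuinely delicate case, and the main obstacle, is $(m,n) = (1,1)$: here $2m+n = 3 = \Delta$, so we sit exactly on the boundary where the hypothesis $\Delta < 2m+n$ of Lemma~\ref{lem deg-diam} fails. In fact the Vizing-based construction of that lemma breaks down, since $\Delta+1 = 4 > 3 = 2m+n$ leaves too few signed adjacency types to realize a proper $(\Delta+1)$-edge-coloring as distinct types at each vertex. I would therefore build an explicit $(1,1)$-absolute clique on $10$ vertices directly on $P$, exploiting two structural facts: $P$ is triangle-free ($\lambda = 0$) and strongly regular with $\mu = 1$, so every non-adjacent pair has a \emph{unique} common neighbour. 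Hence, for a vertex $w$ with (pairwise non-adjacent) neighbours $a,b,c$, the vertex $w$ is the unique common neighbour of each of $\{a,b\},\{a,c\},\{b,c\}$; to make all three pairs see one another it is both necessary and sufficient that $a,b,c$ receive three pairwise distinct adjacency types as seen from $w$. Since a $(1,1)$-graph offers exactly three such types at a vertex (out-arc, in-arc, and edge), I would decompose $P$ into a perfect matching $M$ (the spokes, coloured as the single edge type) and the complementary $2$-factor (the outer and inner $5$-cycles), orienting each cycle cyclically with the single arc type. Then every vertex carries one out-arc, one in-arc and one edge, so every pair of its neighbours forms a special $2$-path, and every non-adjacent pair sees each other through its unique common neighbour (adjacent pairs see each other trivially).

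It remains to argue that the resulting coloured graph $A$ is an \emph{absolute} clique. By Lemma~\ref{lem 2path char} its whole vertex set is a relative clique, and the easy consequence is that any homomorphism $f : A \to H$ is injective: adjacent vertices cannot collapse (no loops in $H$), and two vertices joined by a special $2$-path $uwv$ cannot collapse, since $f(u) = f(v)$ would force $\sigma(uw) = \sigma(vw)$, contradicting specialness. Thus $\chi_{1,1}(A) = |V(A)| = 10$ and $und(A) = P \in \mathcal{G}_3$, so $\omega_{a(1,1)}(\mathcal{G}_3) \geq 10$. Combining with $\omega_{r(1,1)}(\mathcal{G}_3) \leq 10$ and $\omega_{a} \leq \omega_{r}$ gives $10 \leq \omega_{a(1,1)}(\mathcal{G}_3) \leq \omega_{r(1,1)}(\mathcal{G}_3) \leq 10$, and together with the separately established $\nu(2,3) = 10$ all four quantities equal $10$. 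The crux throughout is precisely this boundary case, where having exactly three adjacency types makes the colouring essentially forced at each vertex, and the unique-common-neighbour property of $P$ is exactly what renders such a colouring both necessary and realizable.
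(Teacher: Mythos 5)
Your proposal is correct and follows essentially the same route as the paper: the upper bound from Lemma~\ref{lem delta2}, Lemma~\ref{lem deg-diam} for $2m+n\geq 4$, and for $(1,1)$ an explicit $(1,1)$-absolute clique on the Petersen graph obtained by taking the spokes as edges and orienting the outer and inner $5$-cycles — which is exactly the construction the paper displays in Fig.~\ref{fig Wagner(0,2)}($a$). You merely spell out the verification (unique common neighbours via strong regularity, three distinct adjacency types at each vertex) that the paper leaves to the figure.
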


\begin{proof}
We have 
$\omega_{r(m,n)}({\mathcal{G}_3}) \leq 10$ by Lemma~\ref{lem delta2}
for any 
$(m,n) \neq (0,1)$. 
Also, as 
$\nu(2,3) = 10$, uniquely realized by the Petersen Graph, we have 
$\omega_{a(m,n)}({\mathcal{G}_3}) =\omega_{r(m,n)}({\mathcal{G}_3}) = 10$
for $2m+n \geq 4$ due to Theorem~\ref{th rel delta}(i).  

In the case that $(m,n) = (1,1)$, the proof rests in showing that 
it is possible to convert the Petersen graph into an $(1,1)$-absolute clique (see Fig~\ref{fig Wagner(0,2)}($a$)).
\end{proof}

As $\omega_{a(1,0)}({\mathcal{G}_3}) =\omega_{r(1,0)}({\mathcal{G}_3}) = 7$
is proved by Das, Prabhu and Sen~\cite{das2018study}, we are only left with the cases when
$(m,n) = (0,2)$ and $(0,3)$.

\begin{figure}

\centering
\begin{tikzpicture}
%\draw[step=1cm,gray,very thin] (0,0) grid (12,4);

\filldraw [black] (0,0) circle (2pt) {node[below]{}};
\filldraw [black] (2,0) circle (2pt) {node[below]{}};
\filldraw [black] (3,2) circle (2pt) {node[below]{}};
\filldraw [black] (1,4) circle (2pt) {node[left]{}};
\filldraw [black] (-1,2) circle (2pt) {node[above]{}};

\filldraw [black] (.5,.5) circle (2pt) {node[below]{}};
\filldraw [black] (1.5,.5) circle (2pt) {node[below]{}};
\filldraw [black] (0,2) circle (2pt) {node[below]{}};
\filldraw [black] (2,2) circle (2pt) {node[left]{}};
\filldraw [black] (1,3) circle (2pt) {node[above]{}};

%---------------------------------------------------------------

\draw[-] (0,0) -- (.5,.5);

\draw[-] (2,0) -- (1.5,.5);

\draw[-] (3,2) -- (2,2);

\draw[-] (1,4) -- (1,3);

\draw[-] (-1,2) -- (0,2);

%---------------------------------------------------------------

\draw[->] (.5,.5) -- (.75,1.75);
\draw[-] (.75,1.75) -- (1,3);

\draw[->] (1,3) -- (1.25,1.75);
\draw[-] (1.25,1.75) -- (1.5,.5);

\draw[->] (1.5,.5) -- (0.75,1.25);
\draw[-] (0.75,1.25) -- (0,2);

\draw[->] (0,2) -- (1,2);
\draw[-] (1,2) -- (2,2);

\draw[->] (2,2) -- (1.75,1.75);
\draw[-] (1.75,1.75) -- (.5,.5);

%-------------------------------------------------------------

\draw[->] (0,0) -- (1,0);
\draw[-] (1,0) -- (2,0);

\draw[->] (2,0) -- (2.5,1);
\draw[-] (2.5,1) -- (3,2);

\draw[->] (3,2) -- (2,3);
\draw[-] (2,3) -- (1,4);

\draw[->] (1,4) -- (0,3);
\draw[-] (0,3) -- (-1,2);

\draw[->] (-1,2) -- (-0.5,1);
\draw[-] (-0.5,1) -- (0,0);

%-----------------------------------------------

\node at (1,-0.5)   {$(a)$};

%----------------------------------------------

\filldraw [black] (6,0) circle (2pt) {node[below]{}};
\filldraw [black] (8,0) circle (2pt) {node[below]{}};
\filldraw [black] (9,1) circle (2pt) {node[below]{}};
\filldraw [black] (9,3) circle (2pt) {node[left]{}};
\filldraw [black] (8,4) circle (2pt) {node[above]{}};
\filldraw [black] (6,4) circle (2pt) {node[below]{}};
\filldraw [black] (5,3) circle (2pt) {node[below]{}};
\filldraw [black] (5,1) circle (2pt) {node[below]{}};

\draw[-] (6,0) -- (8,0);
\draw[-] (9,1) -- (9,3);
\draw[-] (8,4) -- (6,4);
\draw[-] (5,3) -- (5,1);

\draw[dashed] (8,0) -- (9,1);
\draw[dashed] (9,3) -- (8,4);
\draw[dashed] (6,4) -- (5,3);
\draw[dashed] (5,1) -- (6,0);

\draw[-] (6,0) -- (8,4);
\draw[dashed] (8,0) -- (6,4);

\draw[-] (9,1) -- (5,3);
\draw[dashed] (9,3) -- (5,1);

%-----------------------------------------------

\node at (7,-0.5)   {$(b)$};

\end{tikzpicture}

\caption{(a) A $(1,1)$-absolute clique on $10$ vertices. (b) A $(0,2)$-absolute clique on $8$ vertices.}\label{fig Wagner(0,2)}
\end{figure}
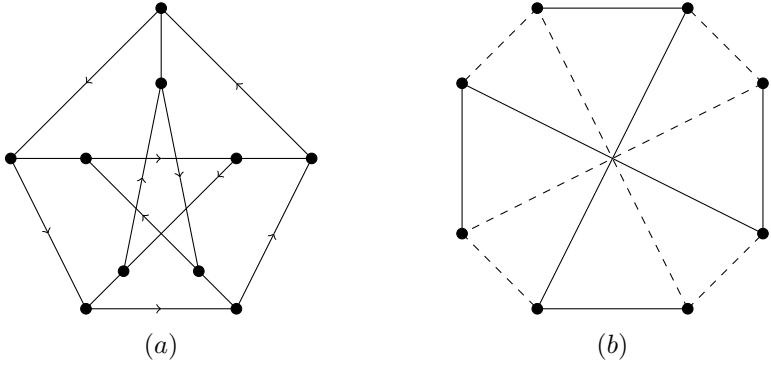

\begin{lemma}\label{lem 23 is 8}
We have $ \omega_{a(0,2)}(\mathcal{G}_3) \geq 8$ and 
$ \omega_{r(0,3)}(\mathcal{G}_3) \leq 8$. 
\end{lemma}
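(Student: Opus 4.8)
The statement has two independent halves, which I would treat separately. For the lower bound $\omega_{a(0,2)}(\mathcal{G}_3)\ge 8$ it suffices to produce a single subcubic $(0,2)$-absolute clique on eight vertices, and the plan is to take the Wagner graph (the M\"obius ladder $M_8$) carrying the $2$-edge-colouring of Figure~\ref{fig Wagner(0,2)}$(b)$. I would first invoke the handy reformulation that an $(m,n)$-graph is an absolute clique exactly when every pair of its vertices sees one another: this forces any homomorphism out of it to be injective, so its $(m,n)$-chromatic number equals its number of vertices. Since $M_8$ is cubic and has diameter $2$, only the non-adjacent pairs need attention, and for each such pair I would exhibit a common neighbour whose two incident edges receive different colours; this is a finite check that the depicted colouring is arranged to pass, and it yields the required $8$-vertex witness.

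For the upper bound $\omega_{r(0,3)}(\mathcal{G}_3)\le 8$ I would argue by contradiction, taking a critical $(0,3)$-relative clique $H$ of $\mathcal{G}_3$ with $|R|\ge 9$. The first step is to record that $\mathcal{G}_3$ is $2$-closed: deleting a vertex of degree at most $2$ and joining its (at most two) neighbours leaves every degree unchanged or smaller, so the result is again subcubic. Hence Lemma~\ref{obs good degree} applies with $k=2$, making every vertex of degree at most $2$ good and forcing every helper to have degree exactly $3$. Next, applying the neighbourhood count from the proof of Lemma~\ref{lem delta2} to a single $u\in R$ gives $|R|\le 1+3\deg(u)$, so a good vertex of degree at most $2$ would force $|R|\le 7$; thus every good vertex is also cubic. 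Combined with Lemma~\ref{obs S is indep}, this shows $H$ is a connected cubic graph with $|R|\in\{9,10\}$.

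To eliminate $|R|=10$, I would observe that for every $u\in R$ the whole relative clique lies in the ball $B_2(u)$, which has at most $10$ vertices in a cubic graph; since $|R|=10$ this ball equals $R$ and the expansion $1+3+6$ is tight at each good vertex. In particular every neighbour of a good vertex is good, so $R$ is a union of components and, by connectedness, $R=V(H)$ with no helpers. The tightness then forces $und(H)$ to be cubic of girth at least $5$ on ten vertices, i.e.\ the Petersen graph. Now in the Petersen graph every non-adjacent pair has a unique common neighbour, so the special-$2$-path requirement forces the three edges at each vertex to carry pairwise distinct colours, i.e.\ a proper $3$-edge-colouring. As the Petersen graph is a snark, this is impossible, ruling out $|R|=10$.

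The remaining case $|R|=9$ is where the real work lies and is the main obstacle. Here $B_2(u)$ has either $9$ or $10$ vertices for a chosen $u\in R$, and I would split the analysis according to the single defect in the otherwise-maximal expansion: an edge among $\{w_1,w_2,w_3\}$ (a triangle at $u$), two of the $w_i$ sharing a further neighbour (a $4$-cycle at $u$), or full distinct expansion to ten vertices in which exactly one second-neighbour fails the special-$2$-path condition to $u$ and so is not good. In each configuration the strategy mirrors the $|R|=10$ case in miniature: use that only three edge-colours are available and every vertex is cubic to show that the special $2$-paths required to make all $\binom{9}{2}$ good pairs see one another over-constrain the local colouring, so that either a proper $3$-edge-colouring of a Petersen-like cubic fragment is forced or else two good vertices are left unable to see each other. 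Carrying out this finite but delicate case check is the crux; the conceptual engine throughout is the non-$3$-edge-colourability of the Petersen graph established in the previous case.
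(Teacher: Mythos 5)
Your lower bound and your handling of $|R|=10$ track the paper's proof closely: the Wagner graph of Figure~\ref{fig Wagner(0,2)}$(b)$ supplies the $8$-vertex $(0,2)$-absolute clique, and the $|R|=10$ case is killed exactly as in the paper by identifying $H$ with the Petersen graph, observing that uniqueness of the connecting $2$-paths forces a proper $3$-edge-colouring, and invoking chromatic index $4$. The genuine gap is the case $|R|=9$: you explicitly leave it as an unexecuted ``finite but delicate case check'' and call it the crux. A described strategy is not a proof, so as written your argument only yields $\omega_{r(0,3)}(\mathcal{G}_3)\le 9$, which is strictly weaker than the claimed bound.

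Moreover, the colouring case analysis you sketch for $|R|=9$ is not the intended route and is far heavier than necessary: the case dies on parity. You have already established everything needed — every good vertex must have degree exactly $3$ (a good vertex of degree at most $2$ caps $|R|$ at $1+3\cdot 2=7$), and every helper has degree exactly $3$ by $2$-closedness and Lemma~\ref{obs good degree} — so a witness with $|R|=9$ would have to be a cubic graph on $9$ vertices once one checks, via the same tight second-neighbourhood count you ran for $|R|=10$, that no helpers survive. The handshaking lemma forbids a cubic graph on an odd number of vertices, and that single observation replaces your entire proposed case analysis. In short: your proof is incomplete at precisely the point where the paper's argument is a one-liner, and the missing idea is parity of cubic graphs rather than non-$3$-edge-colourability of Petersen-like fragments.
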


\begin{proof}
The first bound is implied by the $(0,2)$-absolute clique on $8$ vertices depicted in Fig.~\ref{fig Wagner(0,2)}($b$). 

\medskip

For the second bound, let $H$ be a critical $(0,3)$-relative clique of 
$\mathcal{G}_3$. By Lemma~\ref{lem delta2}, we have that $|R| \leq 10$. 

Let $u$ be a good vertex of $H$. Note that, it can have at most three good neighbors and six good second neighbors. Thus, to have $|R| = 10$, all nine of its neighbors and second neighbors of $u$ must be good. In particular, this implies that any good vertex 
is non-adjacent to a vertex of $S$ and hence, $S = \emptyset$. Therefore, $H$ must be a $(0,3)$-absolute clique. 

As the Petersen graph is the only cubic graph on $10$ vertices having diameter at most $2$, and as any two non-adjacent vertices of the Petersen graph are connected by a unique $2$-path, we can conclude that 
$H$ is a $(0,3)$-absolute clique if and only if there exists a proper $3$-edge-coloring of $H$. However, we know that the chromatic index of the Petersen graph is $4$ and so, it is not possible to have a $3$-edge-coloring of $H$. 

Therefore, $\omega_{r(0,3)}({\mathcal{G}_3}) \leq 9$.  Moreover, due to the handshaking lemma, there does not exist any cubic graph on $9$ vertices. Hence, 
$\omega_{r(0,3)}({\mathcal{G}_3}) \leq 8$. 
\end{proof}

Finally. we are ready to prove Theorem~\ref{th rel subcubic}. 

\medskip

 \noindent \textit{Proof of Theorem~\ref{th rel subcubic}.}
The proof of (ii) and (iii) follow directly from Lemmas~\ref{lem 23 is 8} and~\ref{lem big mn} respectively.
\qed

\section{Partial $2$-trees}\label{Partial 2-tree}
For the family of outerplanar graphs, Bensmail, Duffy and Sen~\cite{bensmail2017analogues} provided a tight bound of $3(2m+n)+1$ for its $(m,n)$-relative clique number. Here we consider a superfamily of it, namely, the family of partial $2$-trees, or equivalently, the family of $K_4$-minor-free graphs and provide tight bounds in this case too. Furthermore, we also provide tight bounds for some sparse subfamilies of partial $2$-trees. 
As, usually, many bounds and values of such parameters are the same for the families of 
outerplanar graphs and partial $2$-trees, the difference in our case comes as a surprise. 

Let $\mathcal{T}^2_g$ denote the family of partial $2$-trees having girth at least $g$, where \textit{girth} of a graph is the length of its smallest cycle. All bounds are tight in this section.

  \begin{theorem}\label{th 2tree}
 For all $(m,n) \neq (0,1)$, we have
 \begin{enumerate}[(i)]
\item $\omega_{a(m,n)}({\mathcal{T}^2_3}) = \omega_{r(m,n)}({\mathcal{T}^2_3}) = (2m+n)^2 + (2m+n) +1$.

\item $\omega_{a(m,n)}({\mathcal{T}^2_4}) = \omega_{r(m,n)}({\mathcal{T}^2_4}) = 
(2m+n)^2 + 2$.

\item $\omega_{a(m,n)}({\mathcal{T}^2_5}) = \omega_{r(m,n)}({\mathcal{T}^2_5}) = \max(2m+n+1,5)$ for $(m,n) \neq (0,2)$. 

\item $\omega_{a(0,2)}({\mathcal{T}^2_5}) = 3$ and $\omega_{r(0,2)}({\mathcal{T}^2_5}) = 4$.

\item $\omega_{a(m,n)}({\mathcal{T}^2_g}) = \omega_{r(m,n)}({\mathcal{T}^2_g}) = (2m+n)+1$ for all $g \geq 6$.
\end{enumerate}
 \end{theorem}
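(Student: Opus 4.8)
Throughout write $p = 2m+n$. Since $\omega_{a(m,n)}(G) \le \omega_{r(m,n)}(G)$ for every $G$, the plan is to prove each equality by a two–sided squeeze. For the lower bound I would exhibit, in the relevant subfamily, an $(m,n)$-graph whose \emph{entire} vertex set is a relative clique; such a graph has $\omega_{r(m,n)} = |V|$, hence is automatically an absolute clique, so a single construction bounds \emph{both} parameters from below. I would then prove a matching upper bound on $\omega_{r(m,n)}$ of the whole family using Lemma~\ref{lem 2path char} (two vertices lie in a relative clique iff they are adjacent or joined by a special $2$-path). Because the construction attains the upper bound using all of its vertices, $\omega_{a(m,n)}$ and $\omega_{r(m,n)}$ are forced to coincide.

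The constructions exploit $K_4$-minor-free graphs built as an apex over a forest, which escape the outerplanar restriction and produce the quadratic jump. For (i) I take a centre $c$, hubs $h_1,\dots,h_p$ and pages $x_{ij}$ ($1\le i,j\le p$) with $\sigma(ch_i)=i$, $\sigma(cx_{ij})=i$, $\sigma(h_ix_{ij})=j$; this is a union of books glued at $c$, a partial $2$-tree of girth $3$, and checking the three kinds of pairs (via $c$ through distinct first indices, via $h_i$ through distinct second indices) shows every pair sees the other, giving $p^2+p+1$. For (ii) I take apices $u,v$ and vertices $x_{ij}$ adjacent to both with $\sigma(x_{ij}u)=i$, $\sigma(x_{ij}v)=j$; this is $K_{2,p^2}$ (triangle-free, girth $4$), where $u,v$ see each other through any $x_{ij}$ with $i\neq j$ and $x_{ij},x_{kl}$ see each other through $u$ or $v$ whenever $(i,j)\neq(k,l)$, giving $p^2+2$. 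For (iii) and (v) the star $K_{1,p}$ with distinct types on its edges gives $p+1$ (a tree, so of arbitrarily large girth), while the floor of $5$ in (iii) comes from a $C_5$ whose $2$-paths are all special—a directed $C_5$ when $m\ge 1$, a proper $3$-edge-colouring when $n\ge 3$—the single case $(0,2)$, where neither is available, being exactly the exception handled in (iv), where the two-coloured $C_5$ with pattern $1,2,1,2,1$ furnishes a relative clique on $4$ of its $5$ vertices.

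For the upper bounds I would pass to a critical $(m,n)$-relative clique $H$ with good set $R$ and independent helper set $S$, and use that partial $2$-trees are $2$-degenerate and $K_4$-minor-free. The structural engine has two ingredients. First, for girth at least $5$: two neighbours $a,b$ of a common vertex $w$ that both lie in $R$ can neither be adjacent (a triangle) nor share a second common neighbour (a $4$-cycle), so they must see each other through $w$ and hence disagree in type at $w$; therefore \emph{every} vertex of $H$ has at most $p$ neighbours in $R$, which already yields $|R|\le p+1$ whenever some good vertex dominates $R$. Second, for girth $3$ and $4$, $G[N(u)]$ is a forest (a cycle in $N(u)$ together with $u$ contains a $K_4$-minor), and I would organise $N(u)\cap R$ into type classes $N^{\alpha}(u)\cap R$: two members of one class do not see each other through $u$, so the $K_4$-minor-free forest structure forces each class to funnel through a single common neighbour and to have size at most $p+1$, giving $|N(u)\cap R|\le p(p+1)$. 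Combining this with a bound on the distance-$2$ vertices of $R$ yields the target $p^2+p+1$; the girth-$4$ variant replaces the triangles by the observation that the classes share a common neighbour, trimming the total to $p^2+2$.

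The hard part, I expect, is making this counting \emph{tight} rather than merely quadratic, i.e.\ exhibiting the exact trade-off between $|N(u)\cap R|$ and the distance-$2$ part that the extremal examples realise with no slack. For girth at least $6$ the obstacle is that, although the ``at most $p$ neighbours in $R$'' rule persists, girth \emph{exactly} $6$ permits a two-layered configuration (the $6$-cycle $c\,x\,a\,w\,b\,y$ shows a relative clique need not be dominated by one vertex), so I would have to forbid any extension of such a configuration beyond $p+1$ good vertices, presumably by invoking criticality to delete a good vertex of degree at most $2$ and contradict minimality. The girth-$5$ floor of $5$ and the genuinely exceptional values $\omega_{a(0,2)}(\mathcal{T}^2_5)=3$, $\omega_{r(0,2)}(\mathcal{T}^2_5)=4$ need separate small-case arguments, and verifying that $\mathcal{T}^2_g$ is $k$-closed for the correct $k$—so that Lemma~\ref{obs good degree} applies and low-degree helpers can be removed without lowering the girth—is the technical point on which the entire reduction to critical relative cliques rests.
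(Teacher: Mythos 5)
Your lower-bound constructions coincide with the paper's (the apex-over-stars graph of Lemma~\ref{lem 2tree lower bound} for girth $3$, the $K_{2,(2m+n)^2}$ of Lemma~\ref{lem 2tree triangle free lower bound} for girth $4$, stars and labelled $5$-cycles for the rest), and your general framework---critical relative cliques, independent helpers, type classes of neighbours, $K_4$-minor-freeness---is also the paper's. The genuine gap is in the upper bounds for girths $3$ and $4$, and you have half-named it yourself: your counting delivers the exact constants only when a single vertex dominates the good set $R$, which is precisely the easy case. The paper's route is: a degree-$2$ good vertex $u$ exists (partial $2$-trees are $2$-degenerate and $2$-closed, so Lemma~\ref{obs good degree} applies), and by Lemma~\ref{lem 2path char} every good vertex is adjacent to one of its two neighbours $u_1,u_2$; one then partitions $R$ into $Y$ (adjacent to both), $Y_1$ and $Y_2$, and argues by cases on $|Y_2|\in\{0\}$, $\{1\}$, $\{\geq 2\}$. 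Only the case $|Y_2|=0$ is the dominated one your sketch handles. The cases $|Y_2|\geq 1$ need two structural facts you do not have: if $u_i$ has a good $\alpha$-neighbour in $Y$ it cannot have one in $Y_i$ (Lemma~\ref{lem alpha-alpha forbidden}), and if $|Y_2|\geq 2$ then all of $Y_1\cup Y_2$ is adjacent to one common vertex $w$ whose type sets towards $Y_1$ and $Y_2$ are disjoint (Lemma~\ref{lem C1-C2 universal}). Feeding these into the count produces expressions such as $p_1p_2+(p-p_1)q_1+(p-p_2)(p-q_1)+2$ whose maximisation over the splits of the $p=2m+n$ types gives exactly $p^2+2$, respectively $p^2+p+1$. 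Without this trade-off you only get $O(p^2)$, which does not prove parts (i) and (ii) as stated.

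A second, smaller gap is in girths $5$ and $6$: your ``at most $p$ good neighbours per vertex'' rule yields $p+1$ only under domination, and your proposed fix for the non-dominated configuration (delete a degree-$2$ good vertex and invoke criticality) is not an argument---deleting a \emph{good} vertex changes $R$ and does not contradict minimality. The paper instead shows (Lemmas~\ref{lem girth6 relative situation} and~\ref{lem girth5 structure}) that in a $6$-cycle the good vertices are forced to be $v_1,v_3,v_5$ with $v_2,v_4,v_6$ helpers, and any further good vertex must reach all three by internally disjoint special $2$-paths, forcing a $K_4$-minor; likewise any good vertex off a $5$-cycle containing three good vertices must reach all three by internally disjoint paths, again forcing a $K_4$-minor. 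These minor-forcing arguments, not the degree count, are what cap $|R|$ when a short cycle is present and reduce the remaining cases to trees via Propositions~\ref{prop mn trees} and~\ref{prop girth7}; you would need them to close (iii)--(v), including the $(0,2)$ exception in (iv).
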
 
 
We will handle the higher girth cases first. To begin with, let us prove a result for trees.

\begin{proposition}\label{prop mn trees}
For the family $\mathcal{T}$ of trees, we have 
$$\omega_{a(m,n)}(\mathcal{T}) = \omega_{r(m,n)}(\mathcal{T}) = (2m+n)+1$$ for all $(m,n) \neq (0,1)$. 
\end{proposition}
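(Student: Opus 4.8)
Write $p = 2m+n$ throughout; the claim is $\omega_{a(m,n)}(\mathcal{T}) = \omega_{r(m,n)}(\mathcal{T}) = p+1$. Since the absolute clique number never exceeds the relative clique number, it suffices to establish a lower bound $\omega_{a(m,n)}(\mathcal{T}) \geq p+1$ by exhibiting one tree that is an absolute clique, together with an upper bound $\omega_{r(m,n)}(\mathcal{T}) \leq p+1$ valid for every tree.

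For the lower bound I would take the star $K_{1,p}$ with center $c$ and leaves $\ell_1,\dots,\ell_p$, and assign to the $p$ pendant adjacencies the $p$ pairwise distinct adjacency types of $\{-m,\dots,-1,1,\dots,m+n\}$, one per leaf (an arc type contributing either orientation, an edge type its single label), so that no two leaves agree on $c$. Every leaf is then adjacent to $c$, and any two leaves $\ell_i,\ell_j$ are joined by a special $2$-path $\ell_i c \ell_j$; by Lemma~\ref{lem 2path char} all $p+1$ vertices pairwise see one another. Thus $V(K_{1,p})$ is a relative clique, every homomorphic image of this colored star has at least $p+1$ vertices, and so $\chi_{m,n}=p+1=|V|$, making it an absolute clique of the required size.

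The heart of the argument is the upper bound. Let $R$ be a relative clique in a tree $T$. By Lemma~\ref{lem 2path char} any two vertices of $R$ are adjacent or joined by a special $2$-path; since $T$ is acyclic, this $2$-path is forced to be the unique path between them, so every pair of $R$ lies at tree-distance at most $2$. I would then pass to the minimal subtree $T'$ of $T$ spanning $R$: each leaf of $T'$ is an endpoint of a path between two vertices of $R$ and hence belongs to $R$, and because $T'$ preserves distances we get $\mathrm{diam}(T')\leq 2$. A tree of diameter at most $2$ is a star, so $T'=K_{1,t}$ with center $c$ and leaves $\ell_1,\dots,\ell_t \in R$. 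Any two distinct leaves are non-adjacent in $T$, so they can see one another only through $c$; this forces no two of them to agree on $c$. As a vertex has neighbors of at most $p$ distinct types, $t\leq p$, and therefore $|R|\leq t+1\leq p+1$.

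The only delicate point — the step I would call the main (mild) obstacle — is the structural reduction: verifying that all leaves of the Steiner tree $T'$ lie in $R$ (so that $T'$ indeed has diameter at most $2$ and is a star), and then translating ``$\ell_i$ and $\ell_j$ see each other through $c$'' into the clean count ``the leaves are pairwise non-agreeing at $c$,'' with care about the $\sigma$ bookkeeping so that distinct leaves correspond to distinct elements of $\{-m,\dots,-1,1,\dots,m+n\}$. The degenerate cases $|R|\leq 2$, where $T'$ is a single vertex or edge, are immediate since $p\geq 2$ for $(m,n)\neq(0,1)$.
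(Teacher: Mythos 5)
Your proof is correct. The lower bound (the star $K_{1,p}$ with pairwise distinct adjacency types at the center) is exactly the paper's construction. For the upper bound, the counting core is also the same as the paper's: the whole relative clique $R$ sits inside the closed neighborhood of one vertex, and since any two members of $R$ adjacent to that vertex must see each other through it (a tree admits no other $2$-path and no other adjacency), they carry pairwise distinct adjacency types, giving at most $2m+n$ of them plus the center. Where you diverge is in how you locate that center: the paper invokes its critical-clique machinery, using Lemma~\ref{obs good degree} (trees are $1$-closed, so every leaf of a critical configuration is a good vertex) to produce a degree-one good vertex whose unique neighbor plays the role of your $c$; you instead pass to the Steiner subtree of $R$, observe its leaves lie in $R$ and hence its diameter is at most $2$, and conclude it is a star. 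Your route is self-contained and works for an arbitrary tree and arbitrary relative clique, without the helper/good-vertex formalism; the paper's route is shorter given the lemmas already established in its preliminaries and matches the template it reuses for the higher-girth partial $2$-tree and planar cases. Both are sound; your handling of the degenerate cases $|R|\leq 2$ and of the distance-preservation of the spanning subtree is the only bookkeeping the paper's version gets for free.
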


\begin{proof}
Let $H$ be a critical $(m,n)$-relative clique of $\mathcal{T}$. 
We know that helpers cannot have degree $1$ or less due to 
Lemma~\ref{obs good degree}.
Thus, there exists a good vertex $u$ of degree one, with a neighbor $v$ (say) in $H$. Every good second neighbor $w$ of $u$ must have 
$\sigma(uv) \neq \sigma(wv)$. Thus, two good neighbors of $v$ cannot have the same type of adjacency with $v$. Moreover, there cannot be any arc/edge 
or special $2$-path, other than the one through $v$, between the neighbors of $v$, as $H$ is a tree. Therefore, $v$ can have at most $(2m+n)$ good neighbors.

On the other hand, the star graph with $2m+n$ leaves, each having a distinct type of adjacency with the central vertex gives the lower bound. Hence we are done. 
\end{proof}

Next, let us handle the case of graphs having girth at least $7$. To do so, let us define $\mathcal{F}_g$ as the family of all graphs having  
 girth $g$.

\begin{proposition}\label{prop girth7}
 For $g \geq 7$, we have 
 $$\omega_{a(m,n)}(\mathcal{F}_{g-1}) = \omega_{r(m,n)}(\mathcal{F}_g) = (2m+n)+1$$
  for all $(m,n) \neq (0,1)$. 
\end{proposition}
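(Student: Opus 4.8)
The plan is to prove both equalities by establishing the matching lower and upper bounds separately, and the crux is that the absolute and the relative upper bounds require \emph{different} girth thresholds --- which is exactly why the statement pairs $\mathcal{F}_{g-1}$ with $\mathcal{F}_g$. For the lower bounds I would reuse the star construction of Proposition~\ref{prop mn trees}: let $S$ be the star $K_{1,2m+n}$ in which the $2m+n$ edges carry pairwise distinct adjacency types, so that $S$ is an $(m,n)$-absolute clique on $2m+n+1$ vertices. Taking the disjoint union of $S$ with a cycle $C_{g-1}$ (respectively $C_g$) gives a graph whose underlying graph lies in $\mathcal{F}_{g-1}$ (respectively $\mathcal{F}_g$) and which contains $S$ as a subgraph. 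Hence $\omega_{a(m,n)}(\mathcal{F}_{g-1}) \geq 2m+n+1$, and since $\omega_{a(m,n)} \leq \omega_{r(m,n)}$ always, also $\omega_{r(m,n)}(\mathcal{F}_g) \geq 2m+n+1$.

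For the upper bounds I would fix a relative clique $R$ (for the absolute case, take $R = V(A)$ for the absolute clique $A$, which inherits girth at least $g-1$ as a subgraph) and pick a vertex $u \in R$. By Lemma~\ref{lem 2path char}, every other vertex of $R$ either is a neighbor of $u$, or sees $u$ through a \emph{unique} middle vertex $w \in N(u)$ (uniqueness and exclusivity follow from girth $\geq 5$, as a second middle or a simultaneous adjacency would force a triangle or a $4$-cycle). Write $B_w$ for the set of vertices of $R$ seeing $u$ through $w$. The same short-cycle reasoning shows that the neighbors of $u$ lying in $R$ must carry pairwise distinct adjacency types at $u$, so there are at most $2m+n$ of them, and that each $B_w$ consists of vertices with pairwise distinct types at $w$, all different from the type of $uw$, so $|B_w| \leq 2m+n-1$. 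The argument then reduces to a dichotomy: either every $B_w$ is empty, whence $R \subseteq \{u\} \cup (N(u)\cap R)$ and $|R| \leq 2m+n+1$; or some $B_w$ is nonempty, in which case I must show this single $w$ absorbs all of $R$, i.e. $R \subseteq \{u,w\} \cup B_w$, again giving $|R| \leq 2m+n+1$.

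The main obstacle --- and the source of the asymmetry between $g$ and $g-1$ --- is the branching case. Ruling out a second $R$-neighbor of $u$ once some $B_w \neq \emptyset$ is routine at girth $\geq 6$ in both settings (a vertex of $B_w$ and another $R$-neighbor of $u$ sit at the ends of a path of length $3$, and any way to make them see each other yields a $4$- or $5$-cycle). The genuinely delicate point is showing that two \emph{distinct} middle vertices $w \neq w'$ cannot both support nonempty $B_w,B_{w'}$. Take $v \in B_w$ and $z \in B_{w'}$; they lie at the ends of the path $v\,w\,u\,w'\,z$. If they see each other by an edge, a $5$-cycle appears, forbidden once girth $\geq 6$; if they see each other through a fresh middle vertex $x$, a $6$-cycle appears.

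Here is the key distinction. In the relative setting $x$ may be a helper over which we have no control, so the $6$-cycle must be forbidden outright, forcing girth $\geq 7$; this is why the relative bound is stated for $\mathcal{F}_g$ with $g \geq 7$. In the absolute setting, however, $x$ itself belongs to the clique and therefore must also see $u$; tracing how $x$ sees $u$ (it cannot be adjacent to $u$ without a short cycle, so it sees $u$ through some $y \in N(u)$) collapses the configuration into a forbidden $5$-cycle $u\,y\,x\,v\,w$, so girth $\geq 6$ already suffices. This is precisely what lets the absolute bound hold down to girth $g-1 \geq 6$ while the relative bound needs girth $g \geq 7$. Combining the matching lower and upper bounds gives $\omega_{a(m,n)}(\mathcal{F}_{g-1}) = \omega_{r(m,n)}(\mathcal{F}_g) = 2m+n+1$, as claimed.
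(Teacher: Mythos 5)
Your proof is correct, but it takes a genuinely different route from the paper. The paper's argument is structural and very short: it takes a \emph{critical} relative clique $H$ of $\mathcal{F}_g$, notes via Lemma~\ref{obs S is indep} that any cycle of length at least $7$ would contain a good vertex $v_1$ whose antipodal-ish vertices $v_4$ and $v_5$ both must be helpers (since connecting $v_1$ to either would create a cycle of length at most $6$), contradicting the independence of helpers; hence $H$ is acyclic, and similarly an absolute clique of girth at least $6$ cannot contain a cycle because $v_1$ could not see $v_4$. Both cases then reduce to the tree bound of Proposition~\ref{prop mn trees}. You instead prove the upper bound by a direct counting argument around a fixed clique vertex $u$: classifying the other clique vertices by whether they are neighbors of $u$ or lie in some $B_w$, bounding $|N(u)\cap R|\le 2m+n$ and $|B_w|\le 2m+n-1$ by distinctness of adjacency types, and then showing the branching configurations collapse. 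What your approach buys is self-containedness and generality: it does not invoke the criticality machinery (Lemmas~\ref{obs S is indep} and~\ref{obs good degree}) nor the tree proposition for the upper bound, it applies to an arbitrary relative clique rather than a critical one, and it isolates precisely where girth $6$ suffices (the extra middle vertex $x$ of a would-be $6$-cycle must itself see $u$ in the absolute setting, collapsing to a $5$-cycle) versus where girth $7$ is needed (in the relative setting $x$ may be an uncontrolled helper). What the paper's approach buys is brevity and reuse: once criticality forces $H$ to be a tree, the whole statement is a one-line corollary of the tree case. Both arguments are sound; yours is longer but arguably more transparent about the role of the two girth thresholds.
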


\begin{proof}
Let $H$ be a critical $(m,n)$-relative clique of $\mathcal{F}_g$ having girth $g \geq 7$.
 Suppose $H$ contains a cycle $C = v_1v_2 \cdots v_kv_1$ of length $k$, where $k \geq 7$. By Lemma \ref{obs S is indep}, as two helpers can never be adjacent, $C$ must have a good vertex, say, $v_1$ without loss of any generality. 

Note that, if we add an arc/edge or a special $2$-path between $v_1$ and either $v_4$ or $v_5$, then a cycle of length $6$ or less is created. Thus, both $v_4$ and $v_5$ must be helpers. However, as $v_4$ and $v_5$ are adjacent, this is a contradiction to Lemma~\ref{obs S is indep}. Thus, $H$ does not contain a cycle, which implies that $H$ is a tree.

If however, $H \in \mathcal{F}_{g-1}$ is an $(m,n)$-absolute clique and $C =  v_1v_2 \cdots v_lv_1$, where $l \geq 6$, is a cycle, we can see that it is not possible for $v_1$ to see $v_4$ without violating the girth restrictions. Thus, $H$ is a tree in this case too.

Hence, the proof follows from Proposition~\ref{prop mn trees}.
\end{proof}

Now, let us examine the situation for $(m,n)$-relative clique number of girth $6$ graphs. 

\begin{lemma}\label{lem girth6 relative situation}
Let $H$ be a critical $(m,n)$-relative clique of $\mathcal{F}_6$ having a $6$-cycle $C = v_1v_2 \cdots v_6v_1$, where $v_1$ is a good vertex. Then, $v_3$ and $v_5$ are also good vertices while $v_2, v_4$ and $v_6$ are helpers. Moreover, any good vertex $v$, other than $v_1$, $v_3$ and $v_5$, must be connected to each of the latter by internally disjoint special $2$-paths.
\end{lemma}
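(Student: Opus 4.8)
The plan is to first pin down the good/helper pattern along $C$ from the girth constraint, and then analyse an arbitrary external good vertex. Throughout I will use Lemma~\ref{lem 2path char} (two good vertices must see one another), the independence of $S$ (Lemma~\ref{obs S is indep}), and the following girth observation: since the girth hypothesis forbids cycles of length less than $6$, two \emph{antipodal} vertices of $C$ (those at distance $3$ along $C$, e.g.\ $v_1$ and $v_4$) can neither be adjacent nor be joined by a special $2$-path. Indeed, an edge $v_1v_4$ together with a length-$3$ arc of $C$ closes a $4$-cycle, while a $2$-path $v_1 w v_4$ closes the $5$-cycle $v_1 w v_4 v_3 v_2 v_1$; the degenerate cases where $w$ is itself a cycle vertex collapse to chords creating triangles, and are likewise excluded. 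Hence antipodal vertices can never both be good.

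Applying this to $v_1$: its antipode $v_4$ cannot see $v_1$, so $v_4$ is a helper, and since $S$ is independent its neighbours $v_3,v_5$ are forced to be good. The antipodes $v_6$ (of $v_3$) and $v_2$ (of $v_5$) then cannot be good, hence are helpers. This gives the first assertion, that $v_1,v_3,v_5$ are good while $v_2,v_4,v_6$ are helpers; note the three helpers are pairwise non-adjacent, consistent with $S$ being independent.

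For the second assertion, let $v$ be a good vertex distinct from $v_1,v_3,v_5$. As the remaining cycle vertices are the helpers $v_2,v_4,v_6$, the vertex $v$ lies off $C$, and being good it must see each of $v_1,v_3,v_5$. I first argue these sightings are all special $2$-paths, not edges. Were $v$ adjacent to $v_1$, then its sighting of $v_3$ would close a short cycle: an edge $vv_3$ gives the $4$-cycle $v_1 v v_3 v_2 v_1$, and a $2$-path $v w v_3$ gives the $5$-cycle $v_1 v w v_3 v_2 v_1$ (again the subcases with $w$ on $C$ collapse to triangles or short cycles). Both contradict the girth hypothesis, so $v$ is adjacent to none of $v_1,v_3,v_5$ and reaches each through a special $2$-path $v w_1 v_1$, $v w_3 v_3$, $v w_5 v_5$. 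The internal vertices cannot coincide with $v_1,v_3,v_5$ either, since e.g.\ $w_1=v_3$ would force the chord $v_1v_3$ and hence the triangle $v_1 v_2 v_3 v_1$.

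It then remains to prove the three $2$-paths can be chosen internally disjoint. Suppose those to $v_1$ and $v_3$ are forced to share an internal vertex $w$. Then $w$ is a common neighbour of $v_1$ and $v_3$; since $v_2$ is already such a common neighbour, $w\neq v_2$ would yield the $4$-cycle $v_1 w v_3 v_2 v_1$, contradicting the girth. The only surviving possibility is $w=v_2$, i.e.\ $v$ is adjacent to the cycle-helper $v_2$ and sees both $v_1$ and $v_3$ only through it. This degenerate subcase is the main obstacle, because girth alone does not exclude it: whenever $2m+n\geq 3$ one can assign the three edges at $v_2$ distinct adjacency types. I expect to eliminate it using the criticality of $H$, arguing that such a configuration admits a strictly smaller $(m,n)$-graph in $\mathcal{F}_6$ realising the same relative clique number, which contradicts clause~(iii) of the definition of a critical relative clique. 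Once this subcase (and its symmetric analogues for $v_4$ and $v_6$) is ruled out, the internal vertices $w_1,w_3,w_5$ are pairwise distinct, yielding the required internally disjoint special $2$-paths.
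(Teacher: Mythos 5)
Your treatment of the first assertion is correct and coincides with the paper's: antipodal vertices of $C$ cannot see each other without creating a cycle of length at most $5$, so $v_4$ is a helper, independence of $S$ makes $v_3,v_5$ good, and the antipodal argument applied again makes $v_2,v_6$ helpers. Your reduction of the second assertion is also carefully done: you correctly show that $v$ cannot be adjacent to any of $v_1,v_3,v_5$, that each sighting therefore goes through a common neighbour of $v$ and the relevant $v_i$, and that two of the three special $2$-paths can only collide at one of the cycle helpers $v_2,v_4,v_6$, since any other shared internal vertex closes a $4$-cycle.

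However, the proof is not complete, and the missing piece is a genuine gap rather than a formality. The subcase you label degenerate --- $v$ adjacent to $v_2$ with $\sigma(vv_2)$, $\sigma(v_1v_2)$, $\sigma(v_3v_2)$ pairwise distinct, and $v$ reaching $v_5$ through a new vertex $w_5$ --- produces a graph all of whose cycles have length exactly $6$, so (as you yourself observe) the girth hypothesis cannot exclude it once $2m+n\geq 3$. Worse, this is precisely the configuration that the downstream applications of the lemma (the $K_4$- and $K_5$-minor arguments in Lemmas~\ref{lem rel girth6 2tree} and~\ref{lem rel girth6 planar}) need to rule out: with $v$ attached to the cycle only through $v_2$ and $w_5$, the resulting graph is a theta graph between $v_2$ and $v_5$, hence series-parallel and $K_4$-minor-free, yet $\{v_1,v_3,v_5,v\}$ is a relative clique in it. Writing that you ``expect'' to eliminate the subcase via clause~(iii) of criticality is a statement of intent, not an argument; to make it one you would have to exhibit a strictly smaller $H^*\in\mathcal{F}_6$ (girth preserved, all of $R$ still a relative clique), and no such modification is indicated. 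For comparison, the paper's own proof of the ``moreover'' part consists of the single phrase that the sightings must go ``through distinct helpers due to the girth restrictions''; your analysis has in fact isolated exactly the point where that justification is too thin, but as submitted your proposal does not close the gap either.
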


\begin{proof}
Notice that two antipodal vertices of $C$ cannot be good vertices, as an arc/edge or special $2$-path connecting them will contradict the girth restrictions. Thus, $v_4$ is a helper as it is antipodal to $v_1$. This implies that $v_3$ and $v_5$ are good due to Lemma~\ref{obs S is indep}. By the same argument then, $v_2$ and $v_6$  are helpers, as they are antipodals to $v_5$ and $v_3$ respectively,.

As for the last part of the Lemma, observe that the only way for a vertex $v$ that is not part of the cycle $C$ to be able to see $v_1, v_3$ and $v_5$ are through distinct helpers due to the girth restrictions. 
\end{proof}

\begin{lemma}\label{lem rel girth6 2tree}
 We have 
 $$\omega_{r(m,n)}(\mathcal{T}^2_6) = (2m+n)+1$$
  for all $(m,n) \neq (0,1)$. 
\end{lemma}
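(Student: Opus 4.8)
My plan is to prove the two inequalities $\omega_{r(m,n)}(\mathcal{T}^2_6) \geq (2m+n)+1$ and $\omega_{r(m,n)}(\mathcal{T}^2_6) \leq (2m+n)+1$ separately, the first being immediate and the second requiring a structural analysis of a critical relative clique. For the lower bound I would simply note that every tree has girth larger than $6$ and is a partial $2$-tree, so $\mathcal{T}^2_6$ contains the whole family of trees; hence the star realizing the bound in Proposition~\ref{prop mn trees} already gives $\omega_{r(m,n)}(\mathcal{T}^2_6) \geq (2m+n)+1$. For the upper bound, let $H$ be a critical $(m,n)$-relative clique of $\mathcal{T}^2_6$ with good set $R$ and helper set $S$. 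By Lemma~\ref{obs S is indep}, $H$ is connected and $S$ is independent; in particular every cycle of $H$ contains a good vertex, since a cycle consisting only of helpers would contain two adjacent helpers. If $H$ is acyclic it is a tree and Proposition~\ref{prop mn trees} gives $|R| \leq (2m+n)+1$ directly, so the whole problem reduces to showing that, when $H$ contains a cycle, in fact $|R| = 3$, which is at most $(2m+n)+1$ whenever $2m+n \geq 2$ (the degenerate case $(m,n)=(0,0)$ having no edges, hence no cycles).

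So assume $H$ has a cycle and let $C = v_1 v_2 \cdots v_\ell v_1$ be a shortest one, of length $\ell \geq 6$. Fix a good vertex $v_1$ of $C$. The engine of the argument is a propagation valid because $H$ has girth at least $6$: a good vertex cannot see a vertex at cyclic distance $3$ from it on $C$ (an edge would close a $4$-cycle and a special $2$-path a $5$-cycle), so every vertex at cyclic distance $3$ from a good vertex of $C$ must be a helper; and, since $S$ is independent, the two $C$-neighbours of any helper of $C$ must be good. Starting from $v_1$ and alternating these two implications around $C$ forces the vertices of $C$ to alternate good/helper, which can close up consistently only when $\ell$ is even; this already rules out all odd $\ell$ (for $\ell=7$ the propagation directly produces two adjacent helpers).

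It remains to rule out even $\ell \geq 8$ and to treat $\ell = 6$, and here the $K_4$-minor-freeness of partial $2$-trees enters. When $\ell \geq 8$ the alternating pattern leaves at least four good vertices $g_1,g_2,g_3,g_4$ on $C$ in cyclic order; each consecutive pair is joined through the single helper between them, while each non-consecutive pair, being at cyclic distance at least $4$, must see one another through a vertex \emph{off} $C$. The point I would verify carefully is that all six connecting $2$-paths use pairwise distinct internal vertices, internally disjoint from one another and from $C$ --- any coincidence, or a middle vertex lying on $C$, would again close a cycle of length at most $5$. Once this is checked, the four good vertices with these six internally disjoint paths form a subdivision of $K_4$, a contradiction; hence $\ell = 6$, and the pattern is exactly that of Lemma~\ref{lem girth6 relative situation}: $v_1,v_3,v_5$ good and $v_2,v_4,v_6$ helpers. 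For $\ell = 6$ I would then invoke the last clause of Lemma~\ref{lem girth6 relative situation}: any good vertex $v \notin \{v_1,v_3,v_5\}$ is joined to each of $v_1,v_3,v_5$ by internally disjoint special $2$-paths $v\,w_i\,v_i$. I would first normalise these: if a middle vertex $w_i$ already lies on $C$ (so $w_i \in \{v_2,v_4,v_6\}$), I replace the $2$-path by the single edge $v w_i$, obtaining three paths from $v$ to $C$ that are internally disjoint from each other and from $C$ and that reach three distinct vertices of $C$. Together with the three arcs of $C$ between those contact vertices, these form a subdivision of $K_4$, contradicting $K_4$-minor-freeness. Hence no such $v$ exists, $R = \{v_1,v_3,v_5\}$, and $|R| = 3 \leq (2m+n)+1$.

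The step I expect to be the main obstacle is the disjointness bookkeeping in the two $K_4$-subdivision arguments: one must rule out the possibility that the various connecting $2$-paths secretly share internal vertices or re-enter the cycle, and the normalisation trick of downgrading an offending special $2$-path to a direct edge is precisely what makes the final argument go through uniformly regardless of where the middle vertices sit. Everything else follows from Proposition~\ref{prop mn trees}, Lemma~\ref{lem girth6 relative situation}, the independence of $S$, and the girth and minor-closed hypotheses.
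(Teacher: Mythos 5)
Your proof is correct and its skeleton matches the paper's: reduce the acyclic case to Proposition~\ref{prop mn trees}, and in the presence of a $6$-cycle combine Lemma~\ref{lem girth6 relative situation} with $K_4$-minor-freeness to cap $|R|$ at $3$. The only real divergence is your treatment of cycles of length $\ell \geq 7$. The paper simply observes (Proposition~\ref{prop girth7}) that once no $6$-cycle is present the girth is at least $7$, so on any cycle a good vertex $v_1$ forces \emph{both} $v_4$ and $v_5$ to be helpers --- a special $2$-path to a vertex at cyclic distance $4$ would close a $6$-cycle --- and these two adjacent helpers contradict Lemma~\ref{obs S is indep}. You exploit only the distance-$3$ exclusion, which is why you need the extra parity propagation and the $K_4$-subdivision construction for even $\ell \geq 8$; that case is in fact vacuous (on a shortest cycle of length at least $8$, the off-cycle special $2$-path you posit between good vertices at cyclic distance $4$ would itself create a $6$-cycle shorter than $C$), so your argument reaches a valid contradiction, just by a longer and redundant route. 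Your normalisation of the $2$-paths in the $\ell=6$ case --- downgrading a path whose internal vertex is one of $v_2,v_4,v_6$ to a single edge so as to read off a genuine $K_4$-subdivision --- is a clean way of making precise the $K_4$-minor that the paper asserts without detail.
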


\begin{proof}
Let $H$ be a critical $(m,n)$-relative clique of $\mathcal{T}^2_6$. If $H$ contains a $6$-cycle and 
$\omega_{r(m,n)}(H) \geq 4$, then it will force a $K_4$-minor in 
$H$ by Lemma~\ref{lem girth6 relative situation}. 
As partial $2$-trees are $K_4$-minor-free, this is a contradiction. 
Therefore, if $H$ contains a $6$-cycle, we must have $\omega_{r(m,n)}(H) \leq 3 \leq (2m+n)+1$ for all $(m,n) \neq (0,1)$.

On the other hand, if $H$ does not contain any $6$-cycle, 
then $H \in \mathcal{T}^2_g$, where $g \geq 7$, and we are done due to Proposition~\ref{prop girth7}.
\end{proof}

This, along with Proposition \ref{prop mn trees}, proves Theorem \ref{th 2tree}(v). Now, we are going to handle the case where the girth is at least $5$. 
However, during the proof,
we shall  show something useful 
for planar graphs also.

\begin{lemma}\label{lem girth5 structure}
Let $H$ be a critical $(m,n)$-relative clique of $\mathcal{F}_5$ having a $5$-cycle $C = v_1v_2 \cdots v_5v_1$ where $v_1, v_3$ and $v_4$ are good vertices. 
Then any other good vertex $v$, not contained in $C$,  
must be either adjacent or connected by 
 internally disjoint special  $2$-paths with $v_1, v_3$ and $v_4$.
 Moreover, the above-mentioned paths connecting $v$ to $v_3$ and $v_4$ must be special $2$-paths only. 
\end{lemma}

\begin{proof}
Follows from the girth restrictions directly. 
\end{proof}

Now, we are ready to handle the girth $5$ case. 

\begin{lemma}\label{lem rel girth5 2tree}
 We have 
 $$\omega_{a(m,n)}(\mathcal{T}^2_5) = \omega_{r(m,n)}(\mathcal{T}^2_5) = \max(2m+n+1, 5)$$
  for all $(m,n) \neq (0,1), (0,2)$. 
\end{lemma}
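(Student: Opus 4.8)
Throughout write $p=2m+n$. Since $\omega_{a(m,n)}(G)\le\omega_{r(m,n)}(G)$ for every $(m,n)$-graph $G$, it suffices to produce two matching bounds: a lower bound, by exhibiting an $(m,n)$-absolute clique in $\mathcal{T}^2_5$ on $\max(p+1,5)$ vertices, and the upper bound $\omega_{r(m,n)}(\mathcal{T}^2_5)\le\max(p+1,5)$. The chain $\max(p+1,5)\le\omega_{a(m,n)}(\mathcal{T}^2_5)\le\omega_{r(m,n)}(\mathcal{T}^2_5)\le\max(p+1,5)$ then collapses all three quantities to the common value.

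For the lower bound I would split on the size of $p$. When $p\ge 4$ I take the star $K_{1,p}$ and give its $p$ incident arcs/edges $p$ pairwise distinct types; this is a tree, hence a partial $2$-tree that is vacuously of girth at least $5$, and every two leaves are joined by a special $2$-path through the centre, so it is an absolute clique on $p+1=\max(p+1,5)$ vertices (exactly as in Proposition~\ref{prop mn trees}). When $p\le 3$ -- the admissible cases being $(1,0),(1,1),(0,3)$ -- I use the $5$-cycle $C_5$, which is outerplanar and has girth $5$, and type its edges so that the two adjacencies at each vertex differ: a proper $3$-edge-colouring for $(1,1)$ and $(0,3)$, and a consistent cyclic orientation (a directed $C_5$) for $(1,0)$. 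Then every distance-$2$ pair is joined by a special $2$-path, so $C_5$ becomes an absolute clique on $5=\max(p+1,5)$ vertices. This step is also where the hypothesis $(m,n)\neq(0,2)$ enters: with $p=2$ and no arcs available, $C_5$ admits no such typing, which is precisely why $(0,2)$ behaves differently in part (iv).

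For the upper bound, let $H$ be a critical $(m,n)$-relative clique of $\mathcal{T}^2_5$ with good set $R$. If $H$ contains no $5$-cycle then $H\in\mathcal{T}^2_6$, and Lemma~\ref{lem rel girth6 2tree} gives $|R|\le p+1\le\max(p+1,5)$. So assume $H$ contains a $5$-cycle $C$; here the target is $|R|\le 5$. Since $S$ is independent (Lemma~\ref{obs S is indep}), $C$ has at least three good vertices, and as no three vertices of $C_5$ are independent, two of these are adjacent. Fixing such an adjacent good pair $v_3v_4$ on $C$ and invoking Lemma~\ref{lem girth5 structure} to control how the remaining good vertices attach, the plan is to show that $|R|\ge 6$ forces a $K_4$-minor, contradicting the $K_4$-minor-freeness of partial $2$-trees. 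Indeed, Lemma~\ref{lem girth5 structure} attaches off-cycle good vertices to the good cycle vertices by internally disjoint special $2$-paths, while the two cycle-arcs $v_1v_2v_3$ and $v_1v_5v_4$ together with the edge $v_3v_4$ give internally disjoint connections among the cycle vertices. Choosing two further good vertices $w,w'$ and adding the connections $w$--$v_3$, $w$--$v_4$, $w'$--$v_3$, $w'$--$v_4$ and $w$--$w'$ would then exhibit a $K_4$-subdivision on $\{v_3,v_4,w,w'\}$.

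The main obstacle is exactly the internal-disjointness bookkeeping in this last step: the connecting $2$-paths may be forced to run through the branch vertices themselves, and helpers may be shared between two good vertices. This genuinely happens in the extremal all-good $C_5$ of the lower bound, which is why the bound is $5$ and not smaller, so $w,w'$ cannot be chosen blindly. Girth at least $5$ is the lever: it forbids the triangles and $4$-cycles that would otherwise form, which restricts how helpers can coincide with cycle vertices or with one another, and it lets me reroute through the complementary cycle-arc whenever a collision arises. I would therefore argue that, among the at least four good vertices other than $v_3,v_4$ available once $|R|\ge 6$, only boundedly many can have all their connections blocked in this way, so a clean pair $w,w'$ survives and produces the forbidden $K_4$-minor; hence $|R|\le 5$. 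Combining the two cases yields $\omega_{r(m,n)}(\mathcal{T}^2_5)\le\max(p+1,5)$, and together with the constructions we conclude $\omega_{a(m,n)}(\mathcal{T}^2_5)=\omega_{r(m,n)}(\mathcal{T}^2_5)=\max(p+1,5)$ for all $(m,n)\neq(0,1),(0,2)$.
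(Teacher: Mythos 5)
Your overall skeleton (constructions for the lower bound, then a case split on whether the critical graph $H$ contains a $5$-cycle, with Lemma~\ref{lem rel girth6 2tree} disposing of the $5$-cycle-free case and a $K_4$-minor contradiction otherwise) matches the paper, and your lower-bound constructions are fine, indeed slightly more explicit than the paper's, which obtains the typed $C_5$ from Lemma~\ref{lem deg-diam} via $\nu(2,2)=5$ and gets the $p+1$ part from the star of Proposition~\ref{prop mn trees}.

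The problem is the upper bound in the $5$-cycle case, where you openly leave the decisive step unfinished. You choose only the adjacent good pair $v_3,v_4$ on the cycle as branch vertices and try to complete a $K_4$ on $\{v_3,v_4,w,w'\}$ with two off-cycle good vertices; this forces you to produce a $w$--$w'$ connection disjoint from everything else, which Lemma~\ref{lem girth5 structure} says nothing about, and your closing claim that ``only boundedly many'' vertices can be blocked is not an argument. The paper avoids this entirely: since $S$ is independent (Lemma~\ref{obs S is indep}), the $5$-cycle carries at least three good vertices, necessarily in the configuration $v_1,v_3,v_4$ up to relabelling, and these three are already pairwise joined by the internally disjoint cycle segments $v_1v_2v_3$, $v_3v_4$, $v_4v_5v_1$. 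If $|R|\ge 6$ there is a single good vertex $v$ off the cycle, and Lemma~\ref{lem girth5 structure} hands you internally disjoint connections from $v$ to each of $v_1,v_3,v_4$; the four vertices $v,v_1,v_3,v_4$ then yield the $K_4$-minor with no further bookkeeping (any incidental reuse of $v_2$ or $v_5$ is absorbed into the branch sets). So you need only \emph{one} extra good vertex, not two, and the disjointness you were struggling to establish is exactly what Lemma~\ref{lem girth5 structure} already provides. As written, your proof of $|R|\le 5$ has a genuine gap.
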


\begin{proof}

Observe that $\nu (2,2)=5$ and it is realized by the $5$-cycle. So, taking $U$ as a $5$-cycle in the proof of Lemma \ref{lem deg-diam}, we can transform the $5$-cycle into an $(m,n)$-absolute clique for $2m+n \geq 3$. Moreover, for $(m,n) = (1,0)$, the directed $5$-cycle
is a $(1,0)$-absolute clique. Thus, $5 \leq \omega_{a(m,n)}(\mathcal{T}_5^2) \leq \omega_{r(m,n)}(\mathcal{T}_5^2)$ for $(m,n) \neq (0,1), (0,2)$.

Moreover, if $H$ contains a $5$-cycle, then the former must also contain at least $3$ good vertices due to Lemma~\ref{obs S is indep}. Thus, if $\omega_{r(m,n)}(H) \geq 6$, then $H$ will have a $K_4$-minor by Lemma~\ref{lem girth5 structure}. This is a contradiction, as $H$ is a partial $2$-tree and is, hence, $K_4$-minor-free. Therefore, we are done in the case when $H$ contains a $5$-cycle.

However, if $H$ does not 
contain a $5$ cycle, then we are done by Lemma~\ref{lem rel girth6 2tree}, Proposition \ref{prop mn trees} and Proposition \ref{prop girth7}.  
\end{proof}

This proves Theorem \ref{th 2tree}(iii) and leaves us with the corner cases of figuring out the values of 
$\omega_{a(0,2)}(\mathcal{T}^2_5)$ and $\omega_{r(0,2)}(\mathcal{T}^2_5)$.

\begin{lemma}\label{lem 02-rel girth5 2tree}
 We have 
 $$\omega_{a(0,2)}(\mathcal{T}^2_5) = 3 \text{ and } \omega_{r(0,2)}(\mathcal{T}^2_5) = 4.$$
\end{lemma}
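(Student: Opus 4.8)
The plan is to prove the two equalities by matching lower and upper bounds, treating the absolute and relative cases with different tools.

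\emph{Lower bounds.} For $\omega_{a(0,2)}(\mathcal{T}^2_5)\ge 3$ I would take the $(0,2)$-colored path $P_3=xyz$ whose two edges $xy,yz$ receive \emph{distinct} colors. Then $x$ and $z$ see each other through the special $2$-path $xyz$, so all three vertices pairwise see one another \emph{within} the graph; hence $\{x,y,z\}$ is an absolute clique, and its underlying tree lies in $\mathcal{T}^2_5$. For $\omega_{r(0,2)}(\mathcal{T}^2_5)\ge 4$ I would take a $5$-cycle $v_1v_2\cdots v_5v_1$ and color its consecutive edges $1,2,1,2,1$, making $v_2$ a helper while $R=\{v_1,v_3,v_4,v_5\}$ becomes a relative clique: the pairs $(v_1,v_3),(v_1,v_4),(v_3,v_5)$ see each other through special $2$-paths along the cycle, and the remaining pairs $(v_1,v_5),(v_3,v_4),(v_4,v_5)$ are adjacent. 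The underlying $C_5$ is a partial $2$-tree of girth $5$.

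\emph{Upper bound $\omega_{a(0,2)}(\mathcal{T}^2_5)\le 3$.} Since $\omega_{a(0,2)}\le\omega_{r(0,2)}$, it suffices to forbid a $4$-vertex $(0,2)$-absolute clique $A$ in $\mathcal{T}^2_5$. The key point is that a $4$-vertex graph of girth at least $5$ has no cycle at all, so $A$ is a forest, and every special $2$-path witnessing a pair must have its center among the four vertices of $A$ itself. A disconnected forest would leave two vertices without a common neighbor and non-adjacent, so $A$ must be a tree on four vertices, i.e.\ $P_4$ or $K_{1,3}$. In $P_4$ the two endpoints have no common neighbor and cannot see each other; in $K_{1,3}$ the three leaves must pairwise see one another through the unique center, forcing three pairwise distinct edge colors, which is impossible with only $2$ colors. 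Hence no such $A$ exists and $\omega_{a(0,2)}(\mathcal{T}^2_5)=3$.

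\emph{Upper bound $\omega_{r(0,2)}(\mathcal{T}^2_5)\le 4$.} I would take a critical $(0,2)$-relative clique $H$. If $H$ has no $5$-cycle then $H\in\mathcal{T}^2_g$ for some $g\ge 6$, and Lemma~\ref{lem rel girth6 2tree} together with Propositions~\ref{prop mn trees} and~\ref{prop girth7} give $|R|=3$. So assume $H$ contains a $5$-cycle $C=v_1\cdots v_5$, and let $t$ be the number of good vertices on $C$; since helpers form an independent set (Lemma~\ref{obs S is indep}), $t\ge 3$. I would then argue a trichotomy. If $t=5$, all five distance-$2$ pairs must see each other; the along-cycle witnesses would form a proper $2$-edge-coloring of the odd cycle $C_5$, which is impossible, and the failing pair cannot be fixed by an off-cycle center without closing a $4$-cycle, so $t=5$ is excluded. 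If $t\in\{3,4\}$, any further good vertex $p\notin C$ must see all $t$ good vertices of $C$ (all lie in $R$), and by Lemma~\ref{lem girth5 structure} (and the girth restrictions) these connections are internally disjoint edges or special $2$-paths; together with the arcs of $C$ joining the $t$ good vertices cyclically, $p$ becomes the hub of a subdivided wheel $W_t$, which contains a $K_4$-minor. As partial $2$-trees are $K_4$-minor-free, no off-cycle good vertex exists, so $|R|=t\le 4$.

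\emph{Main obstacle.} The delicate step is making the $K_4$-minor extraction in the $t\in\{3,4\}$ case fully rigorous: I must confirm that the $2$-paths from $p$, the edges of $C$, and the good vertices really yield a subdivision of $W_t$ even in the degenerate configurations where a $2$-path center happens to lie on $C$, or where $p$ is adjacent to (rather than $2$-path-connected to) some $v_i$. Resolving these overlaps — by choosing branch sets that absorb shared centers and by using girth and internal disjointness to rule out the alternative routings — is where the bulk of the case work lies. By contrast, the parity obstruction for $t=5$ and the forest dichotomy underlying $\omega_{a(0,2)}\le 3$ are routine once set up.
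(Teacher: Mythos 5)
Your proposal is correct and follows essentially the same route as the paper: the same $1,2,1,2,1$-colored $5$-cycle gives the lower bound of $4$, and the upper bound is obtained exactly as in the paper by showing that a good vertex off a $5$-cycle of a critical clique forces a $K_4$-minor via Lemma~\ref{lem girth5 structure}, with the no-$5$-cycle case delegated to the girth-at-least-$6$ results. Your direct classification of $4$-vertex trees for $\omega_{a(0,2)}(\mathcal{T}^2_5)\le 3$ and your explicit odd-cycle parity argument ruling out five good vertices on $C$ are slightly more self-contained than the paper's terser citations, but the underlying argument is the same.
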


\begin{proof}
Let $H \in \mathcal{T}^2_5$ be a $(0,2)$-absolute clique. As $5$-cycle is not an underlying graph of a $(0,2)$-absolute clique, by Lemma~\ref{lem rel girth6 2tree}, Proposition~\ref{prop mn trees} and Proposition \ref{prop girth7}, $\omega_{a(0,2)}(\mathcal{T}^2_5) = 3$.

Next, let $C=v_1v_2v_3v_4v_5v_1$ be a $5$-cycle and let
$\sigma(v_1v_{2})= \sigma(v_3v_{4}) = \sigma(v_5v_{1}) = 1$ and 
$\sigma(v_2v_{3})= \sigma(v_4v_{5}) =2$.

Then, $\{v_1,v_2,v_3,v_4\}$ is a $(0,2)$-relative clique of $C$. Since $C \in \mathcal{T}^2_5$, $\omega_{r(m,n)}(\mathcal{T}^2_5) \geq 4$. Now, let $H$ be a critical $(0,2)$-relative clique of $\mathcal{T}^2_5$ such that it has a $5$-cycle $C$. Then, $C$ has at least $3$ good vertices by Lemma~\ref{obs S is indep} and at most $4$ good vertices by the girth restrictions. So, $|R| \geq 5$ implies that there exists a vertex $v \in R$ which is not part of $C$. This, in turn, implies a $K_4$-minor in $H$ by Lemma~\ref{lem girth5 structure}, a contradiction to $H$ being a partial $2$-tree. Thus, $\omega_{r(0,2)}(\mathcal{T}^2_5) = |R| \leq 4$ when $H$ has a $5$-cycle. If however, $H$ does not have a $5$-cycle, then, again, using Lemma~\ref{lem rel girth6 2tree} and Proposition \ref{prop girth7} $\omega_{r(0,2)}(\mathcal{T}^2_5)=|R| \leq 3 < 4$, a contradiction to $\omega_{r(m,n)}(\mathcal{T}^2_5) \geq 4$; and so, the case does not arise.
\end{proof}

This proves Theorem \ref{th 2tree}(iv).

\medskip

%----------------------------------------------------------------------------

As we have finished dealing with the higher girth cases, let us move to the lower girth cases now. 
We start with proving the lower bounds.

\begin{lemma}\label{lem 2tree lower bound}
There exists an $(m,n)$-absolute clique on $(2m+n)^2+(2m+n)+1$ vertices whose underlying graph is a  partial $2$-tree. 
\end{lemma}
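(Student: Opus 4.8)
The plan is to exhibit an explicit $(m,n)$-graph $A$ with the required properties, writing $p = 2m+n$ throughout. First I would record the reduction that turns the task into pure combinatorics: by Lemma~\ref{lem 2path char}, if every two vertices of an $(m,n)$-graph see one another, then its entire vertex set is a relative clique, and a graph whose whole vertex set is a relative clique satisfies $\chi_{m,n}(A) = |V(A)|$ (the inequality $\chi_{m,n}(A)\le |V(A)|$ is witnessed by the identity $A\to A$, and the reverse inequality is exactly the relative clique property), hence it is an $(m,n)$-absolute clique. So it suffices to build $A$ on $p^2+p+1$ vertices, with underlying graph a partial $2$-tree, in which every two vertices are adjacent or joined by a special $2$-path.

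For the skeleton I would take one central vertex $u$, a set of $p$ \emph{centers} $c_1,\dots,c_p$, and $p^2$ \emph{leaves} $\ell_{k,j}$ with $k,j\in\{1,\dots,p\}$, giving exactly $1+p+p^2 = p^2+p+1$ vertices. The edges are $uc_k$ for every $k$, $u\ell_{k,j}$ for every $k,j$, and $c_k\ell_{k,j}$ for every $k,j$; thus group $k$ consists of $c_k$ together with the $p$ leaves $\ell_{k,1},\dots,\ell_{k,p}$, each attached to both $u$ and $c_k$. I would verify the underlying graph is a partial $2$-tree by exhibiting the width-$2$ tree decomposition whose bags are $\{u\}$, the bags $\{u,c_k\}$, and the bags $\{u,c_k,\ell_{k,j}\}$ arranged in the obvious rooted tree; equivalently, each group together with $u$ is a book glued to the others only along $u$.

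The coloring is the heart of the argument. Fixing a bijection between $\{1,\dots,p\}$ and the $p$ adjacency types, I would set the type of $uc_k$ (seen from $u$) to be $k$, the type of $u\ell_{k,j}$ (seen from $u$) also to be $k$, and the type of $c_k\ell_{k,j}$ (seen from $c_k$) to be $j$; arc orientations are then forced by these signed types, and one checks this defines a legitimate simple $(m,n)$-graph. The verification of ``every pair sees one another'' splits by the position of the pair. Adjacent pairs are immediate. Two centers, a center and a foreign leaf, or two leaves in different groups each have $u$ as their unique common neighbor, at which they carry distinct types (namely $k\ne k'$), so they see each other through $u$; and two leaves of the same group $k$ share the common neighbor $c_k$, at which they carry the distinct types $j\ne j'$, so they see each other through $c_k$. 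This exhausts all pairs and shows $A$ is an absolute clique.

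The only genuinely delicate point is the simultaneous realizability of the type function, and this is where I expect the main (if modest) obstacle to lie: each ``leaf-to-$u$'' type is forced to coincide with its center's type at $u$ (otherwise a leaf would fail to see some foreign center), while the ``leaf-to-$c_k$'' types must run over all $p$ types so that same-group leaves are pairwise separated. I would confirm these two requirements never collide, since they constrain the two different endpoints of the edges incident to a leaf, and that a vertex carrying several neighbors of the same adjacency type is always realizable over $(m,n)$. Everything else is routine case-checking, with the partial-$2$-tree property supplied by the tree decomposition above.
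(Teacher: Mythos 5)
Your construction is exactly the paper's: the paper takes $2m+n$ copies of a star $K_{1,2m+n}$ whose centre--leaf edges carry all distinct types and joins a new apex to every vertex of the $i$-th copy with type $i$, which is precisely your $u$, $c_k$, $\ell_{k,j}$ graph with the same type assignment. Your proof is correct and just spells out the verification (the see-each-other case analysis and the width-$2$ tree decomposition) that the paper leaves as an assertion.
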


\begin{proof}
Take an $(m,n)$-graph $X$ whose underlying graph is $K_{1,2m+n}$ with the 
degree $(2m+n)$ vertex being $u$. Also, assign adjacencies in such a way that 
$\sigma(uv_1) \neq \sigma(uv_2)$ for all $v_1, v_2 \in N(u)$. This is an 
$(m,n)$-absolute clique on $(2m+n)+1$ vertices.

Now, take $(2m+n)$ copies of $X$ and name them $X_{-m}, X_{-(m-1)}, \cdots X_{-1}, X_1, X_2, \cdots , X_{m+n}$. Futhermore, take a new vertex $x$ and make it adjacent to all vertices of $X_i$ in such a way that $N^{i}(x) = X_{i}$ for all 
$i \in \{-m, -(m-1), \cdots, -1, 1, 2, \cdots, m+n\}$. Note that this new graph so obtained is an $(m,n)$-absolute clique on $(2m+n)^2+(2m+n)+1$ vertices whose underlying graph is a  partial $2$-tree. 
\end{proof}

Next is the turn to prove the lower bound for triangle-free partial $2$-trees. 

\begin{lemma}\label{lem 2tree triangle free lower bound}
There exists an $(m,n)$-absolute clique on $(2m+n)^2+2$ vertices whose underlying graph is a triangle-free partial $2$-tree. 
\end{lemma}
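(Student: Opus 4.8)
Write $p = 2m+n$; note $p \ge 2$ because $(m,n) \neq (0,1)$, and let $T = \{-m,\dots,-1,1,\dots,m+n\}$ denote the set of $p$ adjacency types. The plan is to realise the required example on the complete bipartite graph $K_{2,p^2}$. I would name the two vertices of the small part $u_1,u_2$ and the $p^2$ vertices of the large part $w_{\alpha\beta}$, one for each pair $(\alpha,\beta)\in T\times T$; the colouring is dictated by the coordinates, namely $\sigma(u_1 w_{\alpha\beta})=\alpha$ and $\sigma(u_2 w_{\alpha\beta})=\beta$, where a negative symbol is read as a reversed arc and a symbol larger than $m$ as an edge. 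Since $|T\times T|=p^2$, this uses each coordinate pair exactly once.

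First I would check that the underlying graph lies in $\mathcal{T}^2_4$. As $K_{2,p^2}$ is bipartite it is triangle-free, with girth $4$ (recall $p^2\ge 4$); and it is a partial $2$-tree because adding the edge $u_1u_2$ produces the $2$-tree obtained by attaching each $w_{\alpha\beta}$ to the base edge $u_1u_2$, so $K_{2,p^2}$ is a subgraph of a $2$-tree and hence $K_4$-minor-free.

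Next I would verify that the coloured graph $A$ is an $(m,n)$-absolute clique. Using Lemma~\ref{lem 2path char}, it suffices to show that every pair of vertices of $A$ is adjacent or joined by a special $2$-path, since then $V(A)$ is a relative clique and $|V(A)|\le \omega_{r(m,n)}(A)\le \chi_{m,n}(A)\le |V(A)|$ forces $A$ to be absolute. Each hub-leaf pair is adjacent, so only two cases remain. For two leaves $w_{\alpha\beta}\neq w_{\alpha'\beta'}$ the defining pairs differ, so either $\alpha\neq\alpha'$, giving the special $2$-path $w_{\alpha\beta}\,u_1\,w_{\alpha'\beta'}$, or $\beta\neq\beta'$, giving $w_{\alpha\beta}\,u_2\,w_{\alpha'\beta'}$. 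For the hub pair $u_1,u_2$, any leaf $w_{\alpha\beta}$ with $\alpha\neq\beta$ (one exists since $p\ge2$) yields the special $2$-path $u_1\,w_{\alpha\beta}\,u_2$.

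I expect the one point deserving care to be the simultaneous reconciliation of three competing demands: housing $p^2$ pairwise-seeing leaves, staying triangle-free, and remaining $K_4$-minor-free. The graph $K_{2,p^2}$ is exactly the right compromise, since its only non-adjacent pairs are the two hubs and the leaf pairs, all of which are resolved by $2$-paths through a hub, and encoding each leaf's two hub-adjacencies as a coordinate in $T\times T$ matches the target count precisely. Everything else is a routine check on the sign/edge conventions, which only affect each adjacency type by the fixed involution $\alpha\mapsto-\alpha$ and therefore preserve all the inequalities and non-equalities used above.
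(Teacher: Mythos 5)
Your construction is exactly the paper's: the paper also takes $K_{2,(2m+n)^2}$ with the two high-degree vertices $u_1,u_2$ and assigns the leaves pairwise distinct pairs $(\sigma(u_1v),\sigma(u_2v))$, leaving the verification implicit. Your write-up is correct and simply makes explicit the checks (partial $2$-tree via adding $u_1u_2$, the three seeing cases, and the sign convention) that the paper omits.
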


\begin{proof}
Take an $(m,n)$-graph $X$ whose underlying graph is $K_{2,(2m+n)^2}$ with the 
degree $(2m+n)^2$ vertices being $u_1$ and $u_2$. Now, assign adjacencies in such a way that 
$$(\sigma(u_1v_1),\sigma(u_2v_1)) \neq (\sigma(u_1v_2), \sigma(u_2v_2))$$
for all $v_1, v_2 \in N(u_1) \cap N(u_2)$. This is an 
$(m,n)$-absolute clique on $(2m+n)^2+2$ vertices whose underlying graph is a triangle-free partial $2$-tree. 
\end{proof}

With both the lower bounds proved, we will now engage in proving the upper bounds. 
However, before that, we will describe some common groundwork for both the cases.

%--------------------------------------------------------------------------------------

 Let $H_g$ be a critical $(m,n)$-relative clique of $\mathcal{T}^2_g$, where 
 $g \in \{3,4\}$. Therefore, by Lemma~\ref{obs good degree}, $H_g$ contains a degree two good vertex $u$ with neighbors $u_1$ and $u_2$ (say). 
 Moreover, 
 every good vertex is adjacent to $u_1$ or $u_2$ by Lemma~\ref{lem 2path char}. 
 Let 
 $Y$ be the set of good vertices adjacent to both $u_1$ and $u_2$, 
 $Y_1$ be the set of good vertices adjacent to $u_1$ but not to $u_2$, and 
 $Y_2$ be the set of good vertices adjacent to $u_2$ but not to $u_1$. 
 Note that, as $u \in Y$, we have $|Y| \geq 1$. 
 Also, without loss of generality, assume that $|Y_1| \geq |Y_2|$. 
Based on these notations and nomenclatures, we present some lemmas that will lead us to the coveted proofs for the upper bounds.

\begin{lemma}\label{lem at most p+1}
It is possible to have at most $(2m+n+1)$ good neighbors in $N^{\alpha}(v)$ for some $v \in V(H_g)$,
where $\alpha \in \{-m, -(m-1), \cdots, -1, 1, 2, \cdots, m+n\}$.  Moreover, if $N^{\alpha}(v)$ has exactly $(2m+n+1)$ good neighbors, then the latter induce a star. 
\end{lemma}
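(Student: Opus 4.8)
The plan is to exploit the fact that the members of $N^{\alpha}(v)$ all \emph{agree} on $v$ by the single adjacency type $\alpha$, so that $v$ can never serve as the internal vertex of a special $2$-path joining two of them. Writing $W$ for the set of good vertices in $N^{\alpha}(v)$ and $p=2m+n$, the first step is to record this: for $w,w'\in W$ one has $\sigma(vw)=\sigma(vw')=\alpha$, hence the $2$-path $wvw'$ is never special. By Lemma~\ref{lem 2path char}, since every two vertices of $W\subseteq R$ must see one another, each pair of $W$ is therefore either adjacent or joined by a special $2$-path whose internal vertex lies in $V(H_g)\setminus\{v\}$.

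The engine of the argument is the $K_4$-minor-freeness of $H_g$ together with the observation that $v$ is adjacent to \emph{all} of $W$. First I would note that if three vertices $w_1,w_2,w_3\in W$ were pairwise joined by internally vertex-disjoint connections (edges or special $2$-paths) avoiding $v$, then these three connections together with the edges $vw_1,vw_2,vw_3$ would form a subdivision of $K_4$, contradicting the partial-$2$-tree hypothesis. Consequently, for any three vertices of $W$ the realizing connections cannot be internally disjoint: two of them must share an internal vertex, or one of the three vertices itself serves as the intermediary of the third pair. In particular a triangle among $W$ (three direct edges), or three pairwise distinct external intermediaries, is impossible, which already forces $H_g[W]$ to be triangle-free and acyclic, i.e. a forest.

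From this local ``no internally-disjoint triple'' property I would extract a global structure: there is a single hub vertex $h\neq v$ through which essentially all pairwise seeings of $W$ are realized, namely either a member of $W$ adjacent to all the others, or a common neighbor $z\notin W$ of all of $W$. Once the hub is isolated, the count is immediate: two vertices of $W$ see one another through $h$ only if they attach to $h$ with \emph{distinct} adjacency types, and since there are exactly $p=2m+n$ types, at most $p$ vertices of $W$ can pairwise see each other through $h$; allowing for $h$ itself (when $h\in W$) yields $|W|\le p+1$. For the equality case, when $|W|=p+1$ the hub must lie in $W$, be joined to the remaining $p$ vertices by the $p$ distinct types, and those $p$ vertices must be pairwise non-adjacent (a common edge would close a triangle with the hub and, with $v$, a $K_4$); hence $H_g[W]$ is exactly the star $K_{1,p}$. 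For the triangle-free case $g=4$ the hub cannot lie in $W$, so one even gets $|W|\le p$, still consistent with the stated bound.

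The main obstacle I anticipate is precisely the promotion of the purely local triple condition to a single global hub: a priori different triples could be concentrated at different vertices, and the realizing connections mix direct edges with external $2$-paths, so ruling out a non-star spreading of the witnesses requires care. I expect to handle it by analysing the $2$-connected block of $H_g$ containing $v$ and using that a $2$-connected $K_4$-minor-free (series-parallel) graph admits a nested, outerplanar-type layout of $W$ around $v$; in such a layout the distance-$\le 2$ pairwise-seeing condition collapses the witnesses onto one vertex. The remaining steps, namely the colour count and the star characterisation, are then routine.
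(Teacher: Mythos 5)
Your overall architecture is the same as the paper's: reduce to a single hub vertex $w\neq v$ that is adjacent to all good $\alpha$-neighbours of $v$, then count adjacency types at that hub (two good vertices lying in $N^{\alpha}(v)\cap N^{\beta}(w)$ cannot see each other through $v$ or through $w$, so any realization of their seeing each other closes a $K_4$-minor on $v$, $w$ and the pair). The problem is that the heart of the lemma is exactly the step you flag as ``the main obstacle'': promoting the local observation that no triple of $W$ admits three internally disjoint connections to the existence of \emph{one} global hub. You do not carry this out; you only announce that a block decomposition and the series-parallel structure of $K_4$-minor-free graphs should collapse all the intermediaries onto one vertex. As written this is a conjecture, not a proof: the local triple condition by itself does not obviously exclude configurations in which the intermediaries are spread over several vertices with every triple of connections merely pairwise sharing some internal vertex, and ruling such configurations out is where the real work lies. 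The paper closes this gap with a planar-embedding argument: fix an embedding, arrange the good $\alpha$-neighbours $v_1,\dots,v_t$ clockwise around $v$; once $v_1$ sees $v_3$ (directly or through some $w$), the vertex $v_2$ is enclosed by the closed curve through $v$, $v_1$, $w$, $v_3$ and can only reach $v_4,\dots,v_t$ through $v_1$, $v_3$ or $w$, and iterating this funnels every connection through a single vertex. Some concrete argument of this kind (or a fully worked-out series-parallel/outerplanar layout) must be supplied before your colour count can be invoked.

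A secondary, fixable point: in the counting step you assert that two vertices of $W$ attached to the hub $h$ with the same adjacency type cannot coexist, but you only exclude the possibility that they see each other \emph{through} $h$; they could instead be directly adjacent, or see each other through some third vertex. The paper handles all of these uniformly, since any such realization, together with $v$ and $h$ (which are joined through a third common neighbour once $|W|$ is large enough), produces a $K_4$-minor. You treat adjacency only in the equality case, so the strict inequality $|W|\le 2m+n+1$ is not yet fully justified even granting the hub.
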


\begin{proof}
Suppose that a particular planar embedding of $H_g$ is given and $|N^{\alpha}(v) \cap R| \geq 2m+n+2$.
Let the good $\alpha$-neighbors $v_1, v_2, \cdots, v_t$ ($t \geq 2m+n+2$) of $v$ be arranged in a clockwise manner around $v$ in the said embedding. Thus, in particular, $v_1$ sees $v_3$ either by some special $2$-path through some $w$ (which may be one of the $v_i$s) or directly by adjacency. Hence, $v_2$ is forced to see all $v_j$s ($j \geq 4$)
either through $v_1$ or $v_3$ or $w$. In any case, all $v_i$s are adjacent to some vertex $w$ (which may be one of the $v_i$s) other than $v$. Therefore, $N^{\alpha}(v) \cap N(w)$ contains at least $(2m+n+1)$ good vertices. This implies that at least two good vertices are inside $N^{\alpha}(v) \cap N^{\beta}(w)$ 
where $\beta \in \{-m, -(m-1), \cdots, -1, 1, 2, \cdots, m+n\}$. Then, these two vertices see each other either by being adjacent or by some special $2$-path through a $w' \not\in \{v,w\}$. Notice that this creates a $K_4$-minor in $H_g$. Thus, $N^{\alpha}(v)$ can have at most $(2m+n+1)$ good neighbors.

For the moreover part, notice that, if $w$ as above is not one of the $v_i$s and if $N^{\alpha}(v)$
contains only $(2m+n+1)$ neighbors, then also 
$N^{\alpha}(v) \cap N(w)$ is forced to contain at least $(2m+n+1)$ good vertices, eventually, resulting in 
a $K_4$-minor in $H_g$. 
\end{proof}

A direct corollary follows:

\begin{corollary}\label{cor at most p+1 triangle free}
It is possible to have at most $2m+n$ good neighbors in $N^{\alpha}(v)$ for some $v \in V(H_4)$ where $\alpha \in \{-m, -(m-1), \cdots, -1, 1, 2, \cdots, m+n\}$.   
\end{corollary}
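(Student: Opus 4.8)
The plan is to read this off Lemma~\ref{lem at most p+1} by exploiting the extra girth hypothesis. Lemma~\ref{lem at most p+1} already caps $|N^{\alpha}(v) \cap R|$ at $2m+n+1$ and, crucially, its ``moreover'' clause tells us that whenever this maximum is attained, the $2m+n+1$ good $\alpha$-neighbors induce a \emph{star}. So the entire task reduces to ruling out this extremal configuration in $H_4$, which is where triangle-freeness enters.

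First I would recall that $H_4$ is a critical relative clique of $\mathcal{T}^2_4$, so $und(H_4)$ is triangle-free. Consequently, the set $N^{\alpha}(v)$ — all of whose members are adjacent to $v$ by type $\alpha$ — must be an independent set in $und(H_4)$: any edge joining two such neighbors would, together with the two edges to $v$, close a triangle, violating girth at least $4$.

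Next I would combine this with the structural conclusion of Lemma~\ref{lem at most p+1}. Suppose toward a contradiction that $N^{\alpha}(v)$ contains exactly $2m+n+1$ good vertices. By the ``moreover'' clause these vertices induce a star. Since $(m,n) \neq (0,1)$ forces $2m+n \geq 2$, this star has $2m+n+1 \geq 3$ vertices and hence at least one edge. That edge lies among neighbors of $v$, contradicting the independence established in the previous step. Therefore the count $2m+n+1$ is unattainable, leaving $|N^{\alpha}(v) \cap R| \leq 2m+n$, which is exactly the claim.

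I do not anticipate a genuine obstacle; the one point requiring care is verifying that the induced star is nontrivial — i.e.\ that it really carries an edge — and this is precisely where the standing hypothesis $(m,n) \neq (0,1)$ (equivalently $2m+n \geq 2$) is used. Were $2m+n = 1$ permitted, a one-vertex ``star'' would contain no edge and the argument would break down, so it is worth flagging that this degenerate case is ruled out by assumption.
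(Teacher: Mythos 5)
Your argument is correct and is essentially the paper's own proof, which likewise observes that attaining $2m+n+1$ good $\alpha$-neighbors would, by the ``moreover'' clause of Lemma~\ref{lem at most p+1}, force an edge among neighbors of $v$ and hence a triangle in $H_4$. (One tiny quibble: your closing remark about the degenerate case is slightly off — if $2m+n=1$ the extremal set would have $2$ vertices, and a two-vertex star still carries an edge — but this does not affect the proof since $(m,n)\neq(0,1)$ is assumed throughout.)
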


\begin{proof}
Having $2m+n+1$ good neighbors in $N^{\alpha}(v)$ will result in triangles in $H_4$ due to the moreover part of Lemma~\ref{lem at most p+1}. 
\end{proof}

After the above two structural results, we are now ready to 
handle the particular situation when $Y_2 = 0$.

 \begin{lemma}\label{lem C2=0}
If $|Y_2| = 0$, then $H_3$ has at most $(2m+n)^2+(2m+n)+1$ good vertices. 
\end{lemma}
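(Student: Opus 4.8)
The plan is to use the hypothesis $|Y_2|=0$ to force the whole relative clique $R$ into the (essentially closed) neighbourhood of $u_1$, and then to bound the good neighbours of $u_1$ by summing the per-adjacency-type estimate of Lemma~\ref{lem at most p+1} over all $2m+n$ symbols. Set $p = 2m+n$ for brevity. First I would record the containment: every good vertex other than $u_1$ and $u_2$ must see the degree-two good vertex $u$, and since $u$ has no neighbour besides $u_1,u_2$, such a vertex is adjacent to $u_1$ or to $u_2$; these are precisely the vertices of $Y \cup Y_1 \cup Y_2$. As $Y_2 = \emptyset$, all of them lie in $Y \cup Y_1 \subseteq N(u_1)$. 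Hence $R \setminus \{u_1,u_2\} \subseteq N(u_1)$.

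Next I would count $N(u_1)\cap R$ by adjacency type. Because $und(H_3)$ is simple, $N(u_1)$ is the disjoint union of the classes $N^{\alpha}(u_1)$ over the $p$ symbols $\alpha \in \{-m,\dots,-1,1,\dots,m+n\}$, so
$$|N(u_1)\cap R| \;=\; \sum_{\alpha}\big|N^{\alpha}(u_1)\cap R\big| \;\le\; p\,(p+1) \;=\; (2m+n)^2+(2m+n),$$
where the inequality applies Lemma~\ref{lem at most p+1} (with $v=u_1$) once for each of the $p$ symbols. Combining this with the containment above, $|R| \le p(p+1) + |\{u_1,u_2\}\setminus N(u_1)|$, and it remains to show that the two anchors $u_1,u_2$ contribute a net of at most one extra vertex, so that $|R| \le p(p+1)+1 = (2m+n)^2+(2m+n)+1$, as claimed.

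I expect the bookkeeping of the anchors $u_1$ and $u_2$ to be the main obstacle; the type-summation above is routine. If $u_1$ is a helper the bound is immediate, so the critical situation is $u_1 \in R$, which already accounts for a $+1$. The danger is that $u_2$ is also good and not adjacent to $u_1$ (seeing $u_1$ only through $u$, when $\sigma(u_1u)\neq\sigma(u_2u)$): then $u_2 \notin N(u_1)$ and a naive count would give $p(p+1)+2$. To rule this out I would use that a good $u_2 \notin N(u_1)$ must itself see every vertex of $Y_1$ by special $2$-paths, and argue, via Lemma~\ref{lem 2path char}, Lemma~\ref{obs S is indep} and the $K_4$-minor-freeness of partial $2$-trees, that this configuration either cannot occur or forces $|N(u_1)\cap R| \le p(p+1)-1$, recovering the target bound in all cases.
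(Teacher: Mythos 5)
Your containment $R \setminus \{u_1,u_2\} \subseteq N(u_1)$ and the type-by-type summation via Lemma~\ref{lem at most p+1} are exactly the right ingredients, and they reproduce the paper's count of $p(p+1)$ good vertices inside $N(u_1)$. The gap is the final step: you correctly isolate the dangerous configuration ($u_1$ and $u_2$ both good with $u_2 \notin N(u_1)$, which would naively give $p(p+1)+2$), but you only announce that you ``would argue'' this configuration cannot occur or costs a unit elsewhere --- no such argument is supplied, and it is not routine. For instance, trying to extract a $K_4$-minor from the special $2$-paths joining $u_2$ to the vertices of $Y_1$ runs into cases where those paths, and the paths among the $Y_1$-vertices themselves, all pass through a single common vertex, and the configuration does not obviously self-destruct. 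As written, the proof does not establish the claimed bound in that case.

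The paper sidesteps the entire anchor bookkeeping with one move you are missing: form $H_3'$ by adding the edge $u_1u_2$ to $H_3$ (if not already present). Because $u$ is a degree-two vertex whose only neighbours are $u_1$ and $u_2$, the graph $H_3'$ is still $K_4$-minor-free: any $K_4$-minor model in $H_3'$ using the new edge can be rerouted through the internally disjoint path $u_1uu_2$ inside $H_3$. Since $|Y_2|=0$, in $H_3'$ the vertex $u_1$ is adjacent to \emph{every} good vertex other than itself (including $u_2$), so applying Lemma~\ref{lem at most p+1} --- whose proof only uses planarity and $K_4$-minor-freeness, hence is valid in $H_3'$ --- to each of the $2m+n$ sets $N^{\alpha}(u_1)$ yields $|R| \leq (2m+n)(2m+n+1)+1$ in one stroke, with no case analysis on whether $u_1$ and $u_2$ are good or adjacent. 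Either supply an actual proof of the dichotomy you assert, or adopt this edge-addition trick; the latter is the cleaner route.
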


\begin{proof}
Let $H_3'$ be the graph obtained by 
adding an edge between $u_1$ and $u_2$ (if already not present) in $H_3$. 
As $u_1$ and $u_2$ are both adjacent to the vertex $u$ of degree two,  observe that, $H_3'$ is also 
$K_4$-minor-free like $H_3$. 
Now, since $|Y_2| = 0$, therefore $u_1$ is adjacent to all good vertices in $H_3'$. 
Thus, by Lemma~\ref{lem at most p+1}, $H_3$ can have at most $(2m+n)^2+(2m+n)+1$ good vertices. 
\end{proof}

\begin{lemma}\label{lem C2=0 tf}
If $|Y_2| = 0$, then $H_4$ has at most $(2m+n)^2+2$ good vertices. 
\end{lemma}

\begin{proof}
If $|Y_2| = 0$, then $u_1$ is adjacent to all good vertices, except maybe $u_2$. 
Thus, by Corollary~\ref{cor at most p+1 triangle free}, $N(u_1)$ contains at most $(2m+n)^2$ good vertices.  
\end{proof}

 Now, to handle the cases when $|C_2| \geq 1$, we need to establish a few more structural properties of $H_g$.

 \begin{lemma}\label{lem at most 1 common}
If $|N(v_1) \cap N(v_2)| \geq 3$ for some $v_1, v_2 \in V(H_g)$, then it is possible to have at most one good vertex in $N^{\alpha}(v_1) \cap N^{\beta}(v_2)$, where $\alpha, \beta \in \{-m, -(m-1), \cdots, -1, 1, 2, \cdots, m+n\}$.  
\end{lemma}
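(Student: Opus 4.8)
The plan is to argue by contradiction, producing a forbidden $K_4$-minor in the partial $2$-tree $H_g$. Suppose $N^{\alpha}(v_1)\cap N^{\beta}(v_2)$ contains two distinct good vertices $w_1$ and $w_2$. The first thing I would record is the crucial consequence of the hypothesis that both vertices lie in the \emph{same} set $N^{\alpha}(v_1)$ and the \emph{same} set $N^{\beta}(v_2)$: we have $\sigma(v_1w_1)=\sigma(v_1w_2)=\alpha$ and $\sigma(v_2w_1)=\sigma(v_2w_2)=\beta$, so (whether the adjacencies are arcs or edges) $\sigma(w_1v_1)=\sigma(w_2v_1)$ and $\sigma(w_1v_2)=\sigma(w_2v_2)$. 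Hence neither $v_1$ nor $v_2$ can be the internal vertex of a special $2$-path joining $w_1$ to $w_2$. Since $w_1,w_2\in R$, Lemma~\ref{lem 2path char} forces them to see each other, so either $w_1w_2$ is an arc/edge, or they are joined by a special $2$-path $w_1w'w_2$ with internal vertex $w'\notin\{v_1,v_2\}$.

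Next I would bring in the degree hypothesis $|N(v_1)\cap N(v_2)|\geq 3$ to pick a common neighbor $z$ of $v_1$ and $v_2$ with $z\notin\{w_1,w_2\}$. The four edges $v_1w_1, v_1w_2, v_2w_1, v_2w_2$, together with the $v_1$--$v_2$ connection routed through the path $v_1zv_2$, already give five of the six pairwise connections among the branch vertices $v_1,v_2,w_1,w_2$; the only missing one is a $w_1$--$w_2$ connection, which is precisely what the previous paragraph supplies. All five vertices $v_1,v_2,w_1,w_2,z$ are distinct, since $w_1,w_2,z$ are pairwise distinct and none of them equals $v_1$ or $v_2$ (they are neighbors in a simple graph).

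The remaining step is a short case analysis, and the part I expect to be the genuine obstacle is a degenerate coincidence. If $w_1w_2$ is an arc/edge, or if the special $2$-path has $w'\neq z$, then $v_1,v_2,w_1,w_2$ are joined pairwise by internally disjoint paths — the four direct edges, the subdivided path $v_1zv_2$, and either the edge $w_1w_2$ or the subdivided path $w_1w'w_2$ — whose internal vertices $z$ and (if present) $w'$ are distinct from one another and from the branch vertices; this is a subdivision of $K_4$, hence a $K_4$-minor. The delicate case is $w'=z$: then $z$ is adjacent to all of $v_1,v_2,w_1,w_2$, so the five distinct vertices $v_1,v_2,w_1,w_2,z$ carry the eight edges $v_1w_1, v_1w_2, v_2w_1, v_2w_2, zv_1, zv_2, zw_1, zw_2$. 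Since a $K_4$-minor-free (treewidth-$2$) graph on five vertices has at most $2\cdot 5-3=7$ edges, this subgraph cannot be $K_4$-minor-free, so it contains a $K_4$-minor as well. As $H_g$ is a partial $2$-tree and is therefore $K_4$-minor-free, every case yields a contradiction, and we conclude that $N^{\alpha}(v_1)\cap N^{\beta}(v_2)$ contains at most one good vertex.
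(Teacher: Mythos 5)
Your proof is correct and takes essentially the same route as the paper's: since both good vertices lie in $N^{\alpha}(v_1)\cap N^{\beta}(v_2)$, they cannot see each other through $v_1$ or $v_2$, so their connection, together with the edges to $v_1,v_2$ and a path through a third common neighbor, forces a $K_4$-minor, contradicting that $H_g$ is a partial $2$-tree. You have merely spelled out the details that the paper's one-sentence argument leaves implicit, namely the explicit role of the third common neighbor $z$ and the degenerate coincidence $w'=z$.
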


\begin{proof}
Suppose $|N(v_1) \cap N(v_2)| \geq 3$ and $N^{\alpha}(v_1) \cap N^{\beta}(v_2)$ contains at least two good vertices. Then, these good vertices must see each other either by adjacency or by some special $2$-path through a $w \not\in \{v_1, v_2\}$, thus forcing a $K_4$-minor in $H_g$. 
\end{proof}

 \begin{lemma}\label{lem alpha-alpha forbidden}
Let $|Y_2| \geq 1$. In $H_g$, if $u_i$ has a good $\alpha$-neighbor in $Y$, then $u_i$ cannot have
a good $\alpha$-neighbor in $Y_i$ for all $i \in \{1,2\}$.  
\end{lemma}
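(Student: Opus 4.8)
The plan is to argue by contradiction using the $K_4$-minor-freeness of partial $2$-trees. By the symmetry of the two indices (and since $|Y_1|\ge|Y_2|\ge 1$ guarantees both $Y_1$ and $Y_2$ are nonempty), it suffices to treat $i=1$. So I would suppose, for contradiction, that $u_1$ has a good $\alpha$-neighbor $y\in Y$ and a good $\alpha$-neighbor $z\in Y_1$, and fix some $c_2\in Y_2$, which exists because $|Y_2|\ge 1$.

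First I would pin down how the good vertices $y$ and $z$ can see each other. Since $\sigma(u_1y)=\sigma(u_1z)=\alpha$, the $2$-path $yu_1z$ is not special, so $y$ and $z$ do not see each other through $u_1$. They cannot see each other through $u_2$ either, because $z\not\sim u_2$, nor through $u$, whose only neighbors are $u_1,u_2$. As $y$ and $z$ are both good, Lemma~\ref{lem 2path char} forces them to be adjacent or joined by a special $2$-path through some vertex $w\notin\{u,u_1,u_2\}$. The same reasoning applied to the good pair $z,c_2$ (note $c_2\not\sim u_1$, $z\not\sim u_2$, and $u$ has degree two) shows that $z$ and $c_2$ are adjacent or joined through some vertex $q\notin\{u,u_1,u_2\}$.

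The core of the argument is then to exhibit a $K_4$-minor. I would take the four branch sets $\{u_1\}$, $\{u,u_2\}$ (connected via the edge $uu_2$), $\{y\}$, and a connected set $Z^{*}$ obtained by joining $z$ to $c_2$ through the adjacency or special $2$-path found above (together with $w$ when $y$ and $z$ are non-adjacent). The required cross-edges are then immediate: $u_1u$ joins the first two sets; $u_1y$ and $u_2y$ join $\{y\}$ to them (using $y\in Y$); $u_1z$ and $u_2c_2$ join $Z^{*}$ to $\{u_1\}$ and to $\{u,u_2\}$ (using $z\in Y_1$, $c_2\in Y_2$); and the edge $yz$ (or $yw$) joins $\{y\}$ to $Z^{*}$. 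This produces the forbidden $K_4$-minor and the contradiction.

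The hard part will be keeping $Z^{*}$ disjoint from $\{y\}$. The only connecting vertex that can coincide with $y$ is $q$: since $w$ is an internal vertex of a $y$–$z$ path it is automatically distinct from $y$. Thus the construction breaks exactly in the degenerate configuration where $y\sim z$, $z\not\sim c_2$, and the only special $2$-path from $z$ to $c_2$ is $z\,y\,c_2$. Here I would split on the girth. For $g=4$ this cannot occur at all, since $u_1\sim y$, $u_1\sim z$ and $y\sim z$ would form a triangle; hence $y$ and $z$ are joined through some $w\neq y$, which also forces $q\neq y$, and the generic construction goes through. For $g=3$ the residual configuration must be excluded separately — I expect to use the minimality (criticality) of $H_3$, together with the seeing-relations forced on the other good vertices, to reroute the $z$–$c_2$ connection off $y$ and recover the $K_4$-minor. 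This degenerate ``everything passes through $y$'' case is the main obstacle, since on the six vertices $u,u_1,u_2,y,z,c_2$ alone the configuration is genuinely $K_4$-minor-free.
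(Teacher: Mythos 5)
Your generic argument is sound and matches the paper's: identify how $y$ sees $z$ and how $z$ sees $c_2$, observe that the intermediate vertices must avoid $\{u,u_1,u_2\}$ because $\sigma(u_1y)=\sigma(u_1z)=\alpha$, and assemble the $K_4$-minor on the branch sets $\{u_1\}$, $\{u,u_2\}$, $\{y\}$ and $Z^*$. You have also correctly isolated the one coincidence that breaks disjointness, namely $q=y$ (the paper's case $w'=u'$), and correctly disposed of it for $g=4$ via the triangle $u_1yz$.

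The gap is that for $g=3$ you leave this degenerate case unresolved, and the strategy you sketch for it --- ``reroute the $z$--$c_2$ connection off $y$ and recover the $K_4$-minor'' --- is not how it can be closed; as you yourself observe, the configuration is genuinely $K_4$-minor-free, and no amount of rerouting is guaranteed to exist. The paper's resolution is of a different nature: it is a counting argument against the criticality of $H_3$. Concretely, once $z$ sees $c_2$ only through $y$, planarity and $K_4$-minor-freeness force \emph{every} vertex of $Y_1$ to see \emph{every} vertex of $Y_2$ through $y$, force $Y$, $Y_1$, $Y_2$ to be independent, and force all visibility among $(Y\setminus\{y\})\cup Y_1\cup Y_2$ to pass through one of $y$, $u_1$, $u_2$. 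Writing $p=2m+n$, letting $p_i$ be the number of adjacency types $u_i$ uses towards $Y\setminus\{y\}$ and $q$ the number $y$ uses towards $Y_1$, one gets $|R|\le p_1p_2+(p-p_1)q+(p-p_2)(p-q)+3\le p^2+2$, which contradicts $|R|\ge p^2+p+1$ supplied by the lower-bound construction of Lemma~\ref{lem 2tree lower bound}. This counting step is the technical heart of the lemma for $g=3$ and is entirely missing from your proposal.
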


 \begin{proof}
 We shall prove this for $i = 1$, as the proof for $i = 2$ is similar.

 Let $u'$ be a good $\alpha$-neighbor of $u_1$ in $Y$ and let $v$ be a 
  good $\alpha$-neighbor of $u_1$ in $Y_1$. As the only way for $u$ to see $v$ is by a special $2$-path through
  $u_1$, it is not possible to have $u = u'$. 
  Thus, $u'$ sees $v$ either by adjacency or by some special $2$-path through a $w \not\in \{u,u_1,u_2\}$. 
  
  Moreover, as $|Y_2| \geq 1$, there is a good vertex $v'$ in $C_2$. 
  Notice that, it is not possible to have $v' = w$, as that forces a $K_4$-minor in $H_g$. 
  However, as $v'$ must see $v$, the options for that to happen are either via an arc/edge between them or  via a special $2$-path between them through some $w' \notin \{u,u_1,u_2\}$. Now, both the options produce $K_4$-minors in $H_g$, expect in the case that $w'=u'$ in $H_3$; and so, we turn our attention to it. 
   
   So, let $g=3$ and $w'=u'$. 
   Notice that, in this case, each vertex of $Y_1$ must see each vertex of $Y_2$ through $u'$ to avoid 
   creating a $K_4$-minor. Moreover, since $H_3$ is $K_4$-minor-free, this makes $Y, Y_1$ and $Y_2$ independent, and disallows any adjacency or special $2$-paths between vertices of
\begin{enumerate}[(i)]
\item $Y \setminus \{u'\}$ and $Y_i$ and
\item $Y_1$ and $Y_2$.
\end{enumerate}
   
   Thus, the only way for the vertices of $(Y \setminus \{u'\}) \cup Y_1 \cup Y_2$ to see 
   each other is by a special $2$-path through exactly one of $u', u_1$ and $u_2$. Thus, the adjacency types of the arcs/edges between $u'$ and vertices in $Y_1$ are different from those of the arcs/edges between $u'$ and vertices in $Y_2$. Similarly, 
   the adjacency types of the arcs/edges between $u_i$ and vertices in $(Y \setminus \{u'\})$ are different from those of the arcs/edges between $u_i$ and vertices in $Y_i$.

So, assume that $u_i$ has $p_i$ different types of adjacencies 
with the vertices in $(Y \setminus \{u'\})$; and suppose that $u'$ has $q$ different types of adjacencies with the vertices in $Y_1$. Therefore, taking $p=2m+n$ for convenience, we must have 
$$|R| \leq p_1p_2 + (p-p_1)q + (p-p_2)(p-q) + 3 \leq p^2 + p_1p_2 - p_1q - p_2(p-q) + 3.$$
Now, without loss of generality, assuming that $p_1 \geq p_2$, we have 
$$|R| \leq p^2 + p_1p_2 - p_2p + 3 = p^2 + p_2(p_1 - p) + 3 \leq p^2+2 < p^2+p+1,$$
as $1 \leq p_2 \leq p_1 < p$ since $u \in Y$. This is a contradiction, as $|R| \geq p^2+p+1$ due to Lemma~\ref{lem 2tree lower bound}. 
 \end{proof}

 These two structural results now enable us to handle the case $|Y_2|=1$.

 \begin{lemma}\label{lem C2=1}
If $|Y_2| = 1$, then $H_3$ has at most $(2m+n)^2+(2m+n)+1$ good vertices. 
\end{lemma}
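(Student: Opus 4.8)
Write $p=2m+n$ and let $v'$ be the unique vertex of $Y_2$. Following the idea in the proof of Lemma~\ref{lem C2=0}, I would first add the edge $u_1u_2$ (if it is not already present) to obtain a graph $H_3'$ that is still $K_4$-minor-free, since $u$ has degree two in $H_3$. After this modification every good vertex except $u_1$ itself and $v'$ is a neighbour of $u_1$: the vertices of $Y\cup Y_1$ by definition, and $u_2$ (if good) because of the new edge. Hence $R=(N(u_1)\cap R)\cup\{u_1\}\cup\{v'\}$, and writing $d_1=|N(u_1)\cap R|=|Y|+|Y_1|+[u_2\in R]$ we get $|R|=d_1+[u_1\in R]+1$. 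Grouping the good neighbours of $u_1$ by their adjacency type and applying Lemma~\ref{lem at most p+1} yields $d_1\le p(p+1)=p^2+p$, so the naive estimate is $|R|\le p^2+p+2$, which overshoots the target by exactly one. All the work is therefore in recovering this single unit, and the lower bound in Lemma~\ref{lem 2tree lower bound} shows there is no slack to spare.

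The first reduction uses Lemma~\ref{lem alpha-alpha forbidden}, which applies because $|Y_2|=1\ge 1$. Applied at $u_2$ it gives a clean fact: since $v'\in Y_2$ is a good $\sigma(u_2v')$-neighbour of $u_2$, the vertex $u_2$ has no good $\sigma(u_2v')$-neighbour in $Y$; thus the vertices of $Y$, viewed as neighbours of $u_2$, avoid one of the $p$ types, giving $|Y|\le (p-1)(p+1)$. More importantly, from $d_1\le p^2+p$ one gets $|Y|+|Y_1|\le p^2+p-[u_2\in R]$, whence $|R|\le (p^2+p)+1+[u_1\in R]$. So the bound $|R|\le p^2+p+1$ holds automatically whenever $u_1$ is a helper, and the entire remaining difficulty is to eliminate the term $[u_1\in R]$, i.e.\ to handle the case where $u_1$ is good.

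To kill this case I would argue that when $u_1$ is good its neighbourhood cannot be saturated, i.e.\ $d_1\le p^2+p-1$. Suppose instead $d_1=p^2+p$; then all $p$ types occur and every type-class $N^\alpha(u_1)\cap R$ has exactly $p+1$ good vertices which, by the ``moreover'' part of Lemma~\ref{lem at most p+1}, induce a star. Now $v'$ is good, is adjacent to $u_2$ (hence to $u_1$ only through the forbidden route), and must see every one of these $p^2+p$ good neighbours of $u_1$ without being adjacent to $u_1$. Tracking how $v'$ can reach the vertices of a full $(p+1)$-star hanging off $u_1$, while $u_1,v'$ already share the common neighbour $u_2$, I expect $v'$ to be forced to be adjacent to at least two vertices of some such star; together with $u_2$ this gives $u_1$ and $v'$ three common neighbours, so Lemma~\ref{lem at most 1 common} caps each $N^\alpha(u_1)\cap N^\beta(v')$ at one good vertex and the saturating star structure collapses (equivalently, one exhibits a $K_4$-minor in $H_3'$). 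This contradiction forces $d_1\le p^2+p-1$, and therefore $|R|\le (p^2+p-1)+1+1=p^2+p+1$.

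The main obstacle is precisely this last geometric step: converting the intuition ``$v'$ must consume one of $u_1$'s slots'' into a rigorous deficiency count. The delicate sub-case is when $u_1$ and $u_2$ are both good, so that $\{u,u_1,u_2\}$ is a good triangle and $v'$ must in addition see $u_1$ along a path avoiding $u_1$; here one must rule out, using $K_4$-minor-freeness of $H_3'$ together with Lemmas~\ref{lem at most p+1} and~\ref{lem at most 1 common}, every way of simultaneously keeping all $p$ type-classes of $u_1$ of size $p+1$, rather than merely counting available adjacency types. Getting this bookkeeping to land on $p^2+p+1$ and not $p^2+p+2$, in particular making sure $v'$ cannot escape the common-neighbour count by routing through helpers, is where I expect the real effort to lie.
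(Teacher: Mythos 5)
Your setup is fine and you have correctly located where the difficulty sits, but the step that actually closes the gap is not proved, and the mechanism you propose for it does not obviously work. After adding the edge $u_1u_2$ and bounding $|N(u_1)\cap R|\le p(p+1)$ by Lemma~\ref{lem at most p+1}, you are left needing $|N(u_1)\cap R|\le p^2+p-1$ when $u_1$ is good, and for this you assert that a saturated neighbourhood (all $p$ type-classes of $u_1$ being full $(p+1)$-stars) forces $v'$ to be adjacent to at least two vertices of some one star. That is exactly the claim you flag with ``I expect'', and it is not clear it is true: $v'$ can see all $p$ leaves of a star through the star's centre via a single arc/edge to that centre, so $v'$ need never touch two vertices of the same star; and even if $v'$ did acquire three common neighbours with $u_1$, Lemma~\ref{lem at most 1 common} only restricts good vertices lying in $N(u_1)\cap N(v')$, which the leaves of the stars need not do, so the ``collapse'' of the saturating structure does not follow from the tools you cite. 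As written, the argument only establishes $|R|\le p^2+p+2$.

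The paper avoids this case analysis entirely by counting more finely rather than by trying to save a unit at the end. It splits $N(u_1)\cap R$ into $Y$ and $Y_1$ and exploits the fact that every vertex of $Y$ carries a \emph{pair} of adjacency types, one towards $u_1$ and one towards $u_2$: by Lemma~\ref{lem at most 1 common} each such pair is realised by at most one good vertex, and by Lemma~\ref{lem alpha-alpha forbidden} applied at $u_2$ (using the single vertex of $Y_2$) only $p-1$ types are available on the $u_2$ side, so $|Y|\le p_1(p-1)$, where $p_1$ is the number of types $u_1$ uses towards $Y$. The same Lemma~\ref{lem alpha-alpha forbidden} applied at $u_1$ leaves only $p-p_1$ types for $Y_1$, whence $|Y_1|\le(p-p_1)(p+1)$ by Lemma~\ref{lem at most p+1}, and therefore $|R|\le p_1(p-1)+(p-p_1)(p+1)+3=p^2+p-2p_1+3\le p^2+p+1$, since $p_1\ge1$ because $u\in Y$. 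Note that this gives $|Y|+|Y_1|\le p^2+p-2$ unconditionally, so no distinction on whether $u_1$ or $u_2$ is good is needed. The ingredient missing from your proposal is precisely this product bound on $|Y|$: a one-sided count of $N(u_1)$ by type alone discards the $u_2$-coordinate of the vertices of $Y$, and that is what forces you into the unresolved geometric step.
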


\begin{proof}
Let $v$ be the good vertex in $Y_2$. This implies that $u_2$ can have at most $(2m+n-1)$ different
types of adjacencies with the good vertices in $Y$ due to Lemma~\ref{lem alpha-alpha forbidden}. 
Moreover, let $u_1$ have $p_1 \leq 2m+n$ different types of adjacencies with  the good vertices in $Y$. Thus, $u_1$ can have at most $(2m+n-p_1)$ different types of adjacencies with  the good vertices in $Y_1$.
Furthermore, notice that, as $u \in Y$, we have $p_1 \geq 1$. 

Thus, there can be at most $p_1(2m+n-1)$ good vertices in $Y$ due to Lemma~\ref{lem at most 1 common} and at most $(2m+n-p_1)(2m+n+1)$ good vertices in $Y_1$ due to Lemma~\ref{lem at most p+1}. 
The only other possible good vertices in $H_3$ are $u_1, u_2$, and $v$. Thus, taking $p=2m+n$ for convenience, the total number of good vertices in $H_3$ is
$$|R| \leq p_1(p-1) + (p-p_1)(p+1) + 3 = p^2 +p - 2p_1 + 3 \leq p^2 +p +1.$$
The last inequality uses the fact that $p_1 \geq 1$, as $u \in Y$. 
\end{proof}

 \begin{lemma}\label{lem C2=1 tf}
If $|C_2| = 1$, then $H_4$ has at most $(2m+n)^2+2$ good vertices. 
\end{lemma}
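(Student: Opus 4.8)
The plan is to run the same counting argument as in the proof of Lemma~\ref{lem C2=1}, but to replace every appeal to the bound $2m+n+1$ for a single color class by the sharper triangle-free bound $2m+n$ supplied by Corollary~\ref{cor at most p+1 triangle free}. This single substitution is exactly what should drop the final estimate from $(2m+n)^2+(2m+n)+1$ down to $(2m+n)^2+2$. Write $p = 2m+n$, and let $v$ be the unique good vertex of $Y_2$, with $\gamma = \sigma(u_2 v)$. Recall that every good vertex other than $u_1,u_2$ lies in $Y \cup Y_1 \cup Y_2$, since $u$ has degree two.

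First I would bound the number of distinct adjacency types emanating from $u_1$ and $u_2$ towards the good vertices. Since $u \in Y$ witnesses $|Y_2| \geq 1$, Lemma~\ref{lem alpha-alpha forbidden} applies: because $u_2$ has the good $\gamma$-neighbor $v$ in $Y_2$, it cannot have a good $\gamma$-neighbor in $Y$, so $u_2$ realizes at most $(p-1)$ adjacency types on $Y$. Letting $p_1$ be the number of types $u_1$ realizes on $Y$, the same lemma forces $u_1$ to realize at most $(p-p_1)$ types on $Y_1$; and $p_1 \geq 1$ because $u \in Y$ is a good neighbor of $u_1$.

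Next comes the vertex count. Every vertex of $Y$ lies in $N^{\alpha}(u_1) \cap N^{\beta}(u_2)$ for a pair $(\alpha,\beta)$ drawn from the $p_1$ types on the $u_1$ side and the $(p-1)$ types on the $u_2$ side; by Lemma~\ref{lem at most 1 common} each such pair hosts at most one good vertex, giving $|Y| \leq p_1(p-1)$. Every vertex of $Y_1$ lies in some $N^{\alpha}(u_1)$ with $\alpha$ one of the $(p-p_1)$ permissible types, and Corollary~\ref{cor at most p+1 triangle free} caps each color class at $p$ good vertices, giving $|Y_1| \leq (p-p_1)p$. Adding $u_1$, $u_2$ (if good) and the single vertex $v$ of $Y_2$, I obtain
$$|R| \leq p_1(p-1) + (p-p_1)p + 3 = p^2 - p_1 + 3 \leq p^2 + 2,$$
where the last step uses $p_1 \geq 1$. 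This is the desired bound $(2m+n)^2+2$, which is sharp by Lemma~\ref{lem 2tree triangle free lower bound}.

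The arithmetic is routine; the delicate points are the bookkeeping hypotheses. The main obstacle I anticipate is justifying the use of Lemma~\ref{lem at most 1 common}, whose hypothesis demands $|N(u_1) \cap N(u_2)| \geq 3$: when $Y$ is too small this fails, but then $|Y|$ is already tiny and the overall count stays well under $p^2+2$, so the degenerate regime must be dispatched separately. I would also confirm that $u_1$ and $u_2$ are not inadvertently double-counted among $Y$, $Y_1$, $Y_2$ (they are not, since $H_4$ is triangle-free, so $u_1 u_2$ is a non-edge and neither lies in those three classes), and that the ``$+3$'' is a genuine over-count rather than an exact tally, which is precisely the slack absorbed by the inequality $p_1 \geq 1$.
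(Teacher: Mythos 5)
Your proof is correct and follows essentially the same route as the paper's: the identical $Y, Y_1, Y_2$ decomposition, Lemma~\ref{lem alpha-alpha forbidden} to restrict the adjacency types, Lemma~\ref{lem at most 1 common} for $|Y| \leq p_1(p-1)$, the triangle-free per-class cap of $2m+n$ for $|Y_1|$, and the same final arithmetic. If anything, your explicit citation of Corollary~\ref{cor at most p+1 triangle free} (rather than Lemma~\ref{lem at most p+1}, which the paper cites while silently using the sharper triangle-free constant) is the more accurate bookkeeping.
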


\begin{proof}
Let $v$ be the good vertex in $Y_2$. This implies that $u_2$ can have at most $(2m+n-1)$ different
types of adjacencies with the good vertices in $Y$ due to Lemma~\ref{lem alpha-alpha forbidden}. 
Moreover, let $u_1$ has $p_1 \leq 2m+n$ different types of adjacencies with  the good vertices in $Y$. Thus, $u_1$ can have at most $(2m+n-p_1)$ different types of adjacencies with  the good vertices in $Y_1$.
Furthermore, notice that, as $u \in Y$, we have $p_1 \geq 1$. 

Thus, there can be at most $p_1(2m+n-1)$ good vertices in $Y$ due to Lemma~\ref{lem at most 1 common} and 
there can be at most $(2m+n-p_1)(2m+n)$ good vertices in $Y_1$ due to Lemma~\ref{lem at most p+1}. 
The only other possible good vertices in $H_4$ are $u_1, u_2$, and $v$. Thus, taking $p=2m+n$ for convenience, the total number of good vertices in $H_4$ is
$$|R| \leq p_1(p-1) + (p-p_1)p + 3 = p^2 - p_1 + 3 \leq p^2 +2.$$
The last inequality uses the fact that $p_1 \geq 1$, as $u \in Y$. 
\end{proof}

We are now left with one last structural result which will allow us to 
handle the case when $|Y_2| \geq 2$.

 \begin{lemma}\label{lem C1-C2 universal}
If $|Y_2| \geq 2$, then in $H_g$, all good vertices of $Y_1$ and $Y_2$ are adjacent to a vertex $w$. 
Moreover, if $w$ has a good $\alpha$-neighbor in $Y_1$, then $w$ cannot have
a good $\alpha$-neighbor in $Y_2$.
\end{lemma}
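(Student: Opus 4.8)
The plan is to exploit the $K_4$-minor-freeness of $H_g$ together with the ``backbone'' $u_1-u-u_2$, which connects the $u_1$-side (the set $Y_1$) to the $u_2$-side (the set $Y_2$) by a path whose internal vertices lie outside $Y_1\cup Y_2$. First I would record the basic constraint: since every vertex of $Y_1$ is adjacent to $u_1$ but not $u_2$, and every vertex of $Y_2$ is adjacent to $u_2$ but not $u_1$, no cross pair $x\in Y_1$, $a\in Y_2$ can see each other through $u_1$ or $u_2$; hence by Lemma~\ref{lem 2path char} such a pair is either adjacent or joined by a special $2$-path through some witness $w'\notin\{u,u_1,u_2\}$ (recall $u$ has degree two). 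The entire argument rests on the observation that four vertices pairwise joined by internally disjoint paths yield a $K_4$-minor, contradicting $und(H_g)\in\mathcal{T}^2_g$.

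For the first assertion I would fix two good vertices $a,b\in Y_2$, which exist since $|Y_2|\geq 2$, and analyse how the vertices of $Y_1$ (there are at least two, as $|Y_1|\geq|Y_2|\geq 2$) see $a$ and $b$. The key local configuration is the branch set $\{x,u_2,a,b\}$ for a fixed $x\in Y_1$: here $u_2$ is adjacent to both $a$ and $b$, the backbone supplies an $x$--$u_2$ path internally disjoint from $a,b$, and $a$ sees $b$. If, in addition, the three ``seeing'' relations $x$--$a$, $x$--$b$, $a$--$b$ were realised by internally disjoint witnesses avoiding $u_2$, a $K_4$-minor would appear. Forcing these witnesses to overlap, and then letting $x$ range over all of $Y_1$, should pin all cross connections to a single vertex $w$ adjacent to every vertex of $Y_1$ and to $a,b$; a symmetric argument starting from two vertices of $Y_1$ then shows $w$ is adjacent to all of $Y_2$ as well. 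Throughout I would invoke Lemma~\ref{lem at most 1 common} to forbid two good vertices from sharing two common neighbours, which is precisely what prevents the witnesses from splitting.

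The ``moreover'' part I would prove in the spirit of Lemma~\ref{lem alpha-alpha forbidden}. Suppose $w$ had a good $\alpha$-neighbour $x\in Y_1$ and a good $\alpha$-neighbour $a\in Y_2$. Since $x$ and $a$ agree on $w$ by the adjacency type $\alpha$, the path $xwa$ is not special, so $x$ must see $a$ either by adjacency or by a special $2$-path through some $w'\neq w$ with $w'\notin\{u,u_1,u_2\}$. Now $x$ and $a$ possess the common neighbour $w$, the independent backbone connection $x-u_1-u-u_2-a$, and this extra link; taking a second vertex $b\in Y_2$, which is adjacent to $w$ by the first part, as the fourth branch vertex, I would exhibit a $K_4$-minor on $\{x,a,w,u_2\}$ using the path $w-b-u_2$, or on $\{x,a,w,b\}$ in the degenerate case where the witness $w'$ equals $b$. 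Either way we reach a contradiction, so $w$ cannot carry $\alpha$-neighbours on both sides.

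The main obstacle I expect lies in the first part: converting the local $K_4$-minor obstructions into the global statement that one vertex $w$ simultaneously dominates all of $Y_1\cup Y_2$. This demands a careful case analysis distinguishing adjacency from special-$2$-path witnesses, treating the degenerate situations in which a witness coincides with $u_2$ or with one of $a,b$, and respecting the girth, since the triangles tolerated in the $g=3$ analysis are forbidden when $g=4$. By comparison, the $K_4$-minor derivations in the ``moreover'' part are routine once the hub $w$ and an auxiliary vertex $b\in Y_2$ are in hand.
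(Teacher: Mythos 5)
Your overall route---deriving the hub $w$ from explicit $K_4$-minor obstructions rather than from a planar embedding---differs from the paper, which fixes an embedding, orders the good neighbours of $u_1$ and of $u_2$ cyclically, and uses a crossing argument to funnel every cross connection through one vertex. The change of route would be fine, but your core forcing step has a genuine gap. The ``key local configuration'' with branch vertices $\{x,u_2,a,b\}$ does not force the witnesses of $x$--$a$ and $x$--$b$ to coincide, because the third required connection, $a$--$b$, may be realised \emph{only} by the special $2$-path $au_2b$: both $a$ and $b$ lie in $Y_2$, hence are adjacent to $u_2$, and nothing prevents $\sigma(au_2)\neq\sigma(bu_2)$. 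In that case $u_2$ cannot serve simultaneously as a branch vertex and as the internal vertex of the $a$--$b$ path, so the hypothesis of your implication (``internally disjoint witnesses avoiding $u_2$'') simply fails and no contradiction is produced. Concretely, the graph with edges $xu_1$, $u_1u$, $uu_2$, $u_2a$, $u_2b$, $xw_1$, $w_1a$, $xw_2$, $w_2b$, with distinct witnesses $w_1\neq w_2$ and with $a$ seeing $b$ through $u_2$, realises all the required seeing relations yet is series-parallel (suppressing degree-$2$ vertices collapses it to parallel $x$--$u_2$ edges), hence $K_4$-minor-free, and it has no common neighbour of $x$, $a$ and $b$. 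So ``forcing these witnesses to overlap'' does not follow from the configuration you propose.

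The forcing genuinely needs two vertices on \emph{each} side. With $x_1,x_2\in Y_1$ and $a,b\in Y_2$ (available, since the standing assumption $|Y_1|\geq|Y_2|\geq 2$ holds), take branch vertices $x_1,x_2,a,b$, realise $x_1$--$x_2$ through $u_1$ and $a$--$b$ through $u_2$ --- now harmlessly, since $u_1$ and $u_2$ are interior to those two paths only and no cross witness can equal $u$, $u_1$ or $u_2$ --- and then the four cross connections must interfere with one another; that interference is what pins them to a single $w$ after the case analysis (and a further minor argument, e.g.\ with branch sets $\{x_1,w\},\{x_2,w'\},\{a,u_2\},\{b\}$, rules out two distinct hubs). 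You invoke a second vertex of $Y_1$ only for the ``symmetric'' extension, not for the existence of $w$ itself; that is the missing idea. The ``moreover'' part of your plan is essentially sound and matches the paper's one-line justification, granted the first part.
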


 \begin{proof}
 Let us fix a planar embedding of $H_g$ and assume that in that embedding $u_1, u_2$ are arranged in a clockwise order around $u$. Furthermore, let $u_{11}, u_{12}, \cdots, u_{1t}$ denote the good neighbors of $u_1$
 in $Y_1$, arranged in an anti-clockwise order around $u_1$  and 
 let $u_{21}, u_{22}, \cdots, u_{2s}$ denote the good neighbors of $u_2$
 in $Y_2$ arranged in a clockwise order around $u_2$. 
 Notice that, $u_{11}$ must see $u_{22}$ either by adjacency or by a special $2$-path through a vertex $w$ (which maybe one of the $u_{ij}$s). This will force all 
 $u_{ij}$s to be adjacent to $w$ in order to see each other and also maintain the planarity of $H_g$.

For the moreover part, observe that, any adjacency or a special $2$-path between a $u_{1i}$ and a $u_{2j}$ through some vertex other than $w$ produces a $K_4$-minor in $H_g$. 
 \end{proof}

Finally, we are ready to prove the upper bound for the case when $|Y_2| \geq 2$.

 \begin{lemma}\label{lem C2>1}
If $|Y_2| \geq 2$, then $H_g$ has at most $(2m+n)^2+2$ good vertices. 
\end{lemma}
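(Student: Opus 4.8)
The plan is to turn the common neighbour $w$ produced by Lemma~\ref{lem C1-C2 universal} into a counting device and then reduce everything to a short optimization. Throughout write $p=2m+n$. First I would observe that, since every good vertex other than $u$ has to see $u$ and $u$ has only the two neighbours $u_1,u_2$, Lemma~\ref{lem 2path char} forces every good vertex into $Y\cup Y_1\cup Y_2\cup\{u_1,u_2\}$; consequently $|R|\le |Y|+|Y_1|+|Y_2|+2$, and it suffices to prove $|Y|+|Y_1|+|Y_2|\le p^2$. Applying Lemma~\ref{lem C1-C2 universal} gives a vertex $w$ adjacent to all of $Y_1\cup Y_2$, and a short check (using $\deg u=2$ and that $Y_i$-vertices are adjacent to exactly one of $u_1,u_2$) shows $w\notin\{u,u_1,u_2\}$, so that $Y_1\subseteq N(u_1)\cap N(w)$ and $Y_2\subseteq N(u_2)\cap N(w)$.

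Next I would count each class by the pair of adjacency types on the two vertices controlling it. Let $a_i,b_i$ be the number of distinct types $u_i$ uses on its good neighbours in $Y$ and in $Y_i$ respectively, and let $c_i$ be the number of types $w$ uses on its good neighbours in $Y_i$. The maps $y\mapsto(\sigma(u_1y),\sigma(u_2y))$ on $Y$, $y\mapsto(\sigma(u_1y),\sigma(wy))$ on $Y_1$, and $y\mapsto(\sigma(u_2y),\sigma(wy))$ on $Y_2$ are injective (which is exactly Lemma~\ref{lem at most 1 common} whenever the relevant intersection has at least three vertices): two good vertices sharing such a pair agree at both controlling vertices, hence can see each other only by adjacency or by a special $2$-path through a third vertex, and in either case linking $u_1$ to $u_2$ through $u$, or linking $u_i$ to $w$ by a path through $u$, $u_{3-i}$ and a vertex of the opposite class $Y_{3-i}$, produces a $K_4$-minor, a contradiction. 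This yields $|Y|\le a_1a_2$, $|Y_1|\le b_1c_1$ and $|Y_2|\le b_2c_2$. Lemma~\ref{lem alpha-alpha forbidden}, valid since $|Y_2|\ge 2\ge 1$, makes the type sets $u_i$ uses on $Y$ and on $Y_i$ disjoint, so $a_i+b_i\le p$; and the ``moreover'' clause of Lemma~\ref{lem C1-C2 universal} gives $c_1+c_2\le p$.

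It then remains to bound $a_1a_2+b_1c_1+b_2c_2$ subject to $a_i+b_i\le p$ and $c_1+c_2\le p$. Since enlarging $b_i$ only helps, I would set $b_i=p-a_i$; the objective is then linear in $(c_1,c_2)$, so over $c_1+c_2\le p$ its maximum is attained at an endpoint and equals $a_1a_2+p\max(p-a_1,\,p-a_2)$. Assuming $a_1\le a_2$ without loss of generality, this equals $p^2-a_1(p-a_2)\le p^2$ because $a_2\le p$. Hence $|Y|+|Y_1|+|Y_2|\le p^2$ and $|R|\le p^2+2$, as claimed. Note that the argument uses nothing about the girth beyond $K_4$-minor-freeness, so it settles $H_3$ and $H_4$ simultaneously.

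The step I expect to be the real work is the injectivity claim, and in particular making the $K_4$-minors explicit when the relevant common-neighbourhoods are too small for Lemma~\ref{lem at most 1 common} to apply directly (for instance when some of $|Y|,|Y_1|,|Y_2|$ equal $1$ or $2$). There I would have to exhibit the branch sets by hand, using the path $u_1\,u\,u_2$ to join $u_1$ and $u_2$, and a four-edge path from $u_i$ to $w$ passing through $u$, $u_{3-i}$ and a vertex of $Y_{3-i}$ (which exists because the opposite class is nonempty) to join $u_i$ and $w$ while staying disjoint from the two coincident vertices. Checking that such paths are available in every sub-configuration, rather than the optimization itself, is the delicate part.
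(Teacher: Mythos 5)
Your proof is correct and follows essentially the same route as the paper's: the same common neighbour $w$ from Lemma~\ref{lem C1-C2 universal}, the same type-counting bounds on $|Y|$, $|Y_1|$, $|Y_2|$ via Lemmas~\ref{lem at most 1 common} and~\ref{lem alpha-alpha forbidden}, and the same final optimization (the paper simply substitutes $c_2=p-c_1$ and applies the WLOG $p_1\ge p_2$ where you maximize the linear objective over the simplex). Your added care about the hypothesis $|N(v_1)\cap N(v_2)|\ge 3$ of Lemma~\ref{lem at most 1 common} in the small cases is a refinement the paper glosses over, not a different approach.
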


\begin{proof}
According to Lemma~\ref{lem C1-C2 universal}, there exists a $w$ which is adjacent to all good vertices in $Y_1 \cup Y_2$. Moreover, suppose that $w$ has $q_1 \leq 2m+n$ different types of adjacencies with the good vertices in $Y_1$. Then, $w$ can have at most $(2m+n-q_1)$ different types of adjacencies with the good vertices in $Y_2$ due to the moreover part of Lemma~\ref{lem C1-C2 universal}.

Further, suppose that $u_i$ has $p_i \leq 2m+n$ different types of adjacencies with the good vertices in $Y$, where $i \in \{1,2\}$. This implies that $u_i$ has at most $(2m+n-p_i)$ different types of adjacencies with the good vertices in $Y_i$
due to Lemma~\ref{lem alpha-alpha forbidden}. Now, let us calculate the possible number of good vertices in $H_g$ using Lemma~\ref{lem at most 1 common} and along with the observation that the only possible good vertices not contained in $Y \cup Y_1 \cup Y_2$ are $u_1$ and $u_2$. 
Therefore,  taking $p=2m+n$ for convenience, we have 
$$|R| \leq p_1p_2 + (p-p_1)q_1 + (p-p_2)(p-q_1) + 2 \leq p^2 + p_1p_2 - p_1q_1 -p_2(p-q_1) + 2.$$
Let us, with out loss of any generality, assume that $p_1 \geq p_2$ in the above inequality. Thus, we get
$$|R| \leq p^2 +p_1p_2 - p_2q_1 - p_2(p-q_1) +2 = p^2 + p_2(p_1-p)+2 \leq p^2+2.$$
The last inequality uses the fact that $1 \leq p_2 \leq p_1 < p$, as $u \in Y$.
\end{proof}

 Finally, we are ready to prove Theorem~\ref{th 2tree}.

\medskip 

\begin{proof}[Proof of Theorem \ref{th 2tree}]
The proof of (i) directly follows from  Lemmas~\ref{lem 2tree lower bound}, \ref{lem C2=0}, \ref{lem C2=1}, and~\ref{lem C2>1}. The proof of (ii) directly follows from  Lemmas~\ref{lem 2tree triangle free lower bound}, \ref{lem C2=0 tf}, \ref{lem C2=1 tf}, and~\ref{lem C2>1}. The proof of (iii) directly follows from  Lemma~\ref{lem rel girth5 2tree}. The proof of (iv) directly follows from  Lemma~\ref{lem 02-rel girth5 2tree} and the proof of (v) directly follows from  Proposition~\ref{prop mn trees} and Lemma~\ref{lem rel girth6 2tree}.
\end{proof}

\section{Planar graphs}\label{Planar graphs}
Let $\mathcal{P}_g$ denote the family of planar graphs having girth at least $g$. We provide lower and upper bounds for the $(m,n)$-relative clique numbers of $\mathcal{P}_g$ for each value of $g \geq 3$.

  \begin{theorem}\label{th planar}
 For all $(m,n) \neq (0,1)$ we have 
 \begin{enumerate}[(i)]
\item $3(2m+n)^2 + (2m+n) +1 \leq \omega_{a(m,n)}({\mathcal{P}_3}) \leq \omega_{r(m,n)}({\mathcal{P}_3}) \leq 42(2m+n)^2 -11$.

\item  $(2m+n)^2 +2 = \omega_{a(m,n)}({\mathcal{P}_4}) \leq \omega_{r(m,n)}({\mathcal{P}_4}) \leq 14(2m+n)^2 +1$.

\item $\max(2m+n+1, 5)= \omega_{a(m,n)}({\mathcal{P}_5}) \leq \omega_{r(m,n)}({\mathcal{P}_5}) = \max(2m+n+1, 6) $.

\item $2m+n+1= \omega_{a(m,n)}({\mathcal{P}_6}) \leq \omega_{r(m,n)}({\mathcal{P}_6}) = \max(2m+n+1, 4) $.

\item $\omega_{a(m,n)}({\mathcal{P}_g}) = \omega_{r(m,n)}({\mathcal{P}_g}) = (2m+n)+1 $ for $g \geq 7$.
\end{enumerate}
 \end{theorem}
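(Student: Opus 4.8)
The plan is to sort the five parts by girth and to exploit a single structural observation: every girth-based lemma proved earlier (Proposition~\ref{prop mn trees}, Proposition~\ref{prop girth7}, Lemma~\ref{lem girth6 relative situation} and Lemma~\ref{lem girth5 structure}) was established using only the independence of the helper set (Lemma~\ref{obs S is indep}) and the girth restriction, never the $K_4$-minor-freeness of partial $2$-trees. Hence those lemmas apply verbatim to a critical relative clique $H$ whose underlying graph is planar. The one new ingredient is that the obstruction ``$H$ is $K_4$-minor-free'', which powered the partial $2$-tree bounds, is now replaced by the weaker Kuratowski/Wagner obstruction ``$H$ has no $K_5$- and no $K_{3,3}$-minor''. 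Since planar graphs do admit $K_4$-minors, the clique numbers become strictly larger, and this is exactly the source of the small constants $4,5,6$ and of the quadratic bounds. Throughout I write $p=2m+n$.

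For part (v), a critical relative clique $H$ of $\mathcal{P}_g$ has underlying girth at least $g\ge 7$, so the girth-only argument of Proposition~\ref{prop girth7} forces $H$ to be a tree, and Proposition~\ref{prop mn trees} gives $\omega_{r(m,n)}(\mathcal{P}_g)\le p+1$; the planar star $K_{1,p}$ with pairwise-distinct adjacency types realises $p+1$, so with $\omega_a\le\omega_r$ we get equality. Part (iv) uses the same dichotomy: the absolute value $p+1$ follows because a girth-$6$ absolute clique is again forced to be a tree by the argument of Proposition~\ref{prop girth7}, while for the relative number I split on whether $H$ contains a $6$-cycle. If it does not, then $H$ has girth at least $7$ and part (v) applies; if it does, Lemma~\ref{lem girth6 relative situation} pins down three good vertices $v_1,v_3,v_5$ and forces every further good vertex to reach all three through internally disjoint special $2$-paths. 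The key step is a planar-embedding argument around the $6$-cycle: combining the resulting inside/outside regions with the fact that any two good vertices must themselves see each other, a \emph{second} off-cycle good vertex already produces a $K_5$- (or $K_{3,3}$-) minor; hence at most one extra good vertex exists and $|R|\le 4$, giving $\max(p+1,4)$. The value $4$ is realised for $p=2$ by a planar girth-$6$ gadget consisting of a $6$-cycle plus a hub joined to $v_1,v_3,v_5$ by three special $2$-paths.

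Part (iii) runs the analogous dichotomy around a $5$-cycle via Lemma~\ref{lem girth5 structure}: the three good cycle-vertices $v_1,v_3,v_4$ together with the ``orbiting'' good vertices each joined to all three, where now planarity (rather than $K_4$-minor-freeness) controls how many orbiting vertices coexist before a forbidden minor appears, yielding the tight relative value $\max(p+1,6)$ and absolute value $\max(p+1,5)$. The lower bounds come from the planar constructions already present in Lemma~\ref{lem rel girth5 2tree}, supplemented by a small planar gadget that spends one $K_4$-minor to push the relative clique from $5$ up to $6$; the sporadic small cases, notably $(0,2)$, are dispatched by ad hoc constructions exactly as in Lemma~\ref{lem 02-rel girth5 2tree}. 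The genuinely new work lies in parts (i) and (ii). For the lower bounds I would exhibit explicit planar constructions: for $g=4$ the triangle-free partial $2$-tree of Lemma~\ref{lem 2tree triangle free lower bound} is already planar and gives $p^2+2$, and for $g=3$ I would build a planar absolute clique on $3p^2+p+1$ vertices by taking a central vertex whose $p$ colour classes of neighbours each carry one of three ``layers'' of the gadget of Lemma~\ref{lem 2tree lower bound}, the extra factor of roughly three being precisely what a planar (as opposed to $K_4$-minor-free) embedding permits.

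The main obstacle is the upper bounds $\omega_{r(m,n)}(\mathcal{P}_3)\le 42p^2-11$ and $\omega_{r(m,n)}(\mathcal{P}_4)\le 14p^2+1$, together with the matching absolute bound $\omega_{a(m,n)}(\mathcal{P}_4)\le p^2+2$. Here I would count the $\binom{|R|}{2}$ seeing-relations among the good vertices: each is either an edge of $H$ or a special $2$-path through some vertex $w$, so the total is bounded by $|E(und(H))|$ plus $\sum_w(\text{number of special }2\text{-paths through }w)$, where the latter is at most $\lfloor\binom{p}{2}\deg(w)^2/p^2\rfloor$ as in Lemma~\ref{lem discharging}. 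The difficulty absent in the bounded-degree section is that $H$ may have large degrees, so rather than degeneracy I would run a discharging argument on a planar embedding of $H$ and invoke Euler's formula (in the forms $|E|\le 3|V|-6$ for $g=3$ and $|E|\le 2|V|-4$ for $g\ge 4$) to turn the edge and $2$-path counts into a bound quadratic in $p$; balancing the discharging rules against the girth restriction is what produces the explicit, admittedly loose, constants $42$, $14$ and $p^2+2$. I expect this planar discharging, and in particular the gap between these upper bounds and the lower bounds of the previous paragraph, to be the crux of the argument.
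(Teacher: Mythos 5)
Your treatment of parts (iii)--(v) matches the paper's: the girth-based lemmas (Proposition~\ref{prop girth7}, Lemmas~\ref{lem girth6 relative situation} and~\ref{lem girth5 structure}) are indeed stated for $\mathcal{F}_g$ and use only helper-independence and girth, so replacing the $K_4$-minor obstruction by the $K_5$-minor obstruction is exactly what the paper does in Lemmas~\ref{lem rel girth6 planar} and~\ref{lem rel girth5 planar}, and your counts ($|R|\leq 4$ for girth $6$, $|R|\leq 6$ for girth $5$) and lower-bound gadgets agree with theirs. The problem is in parts (i) and (ii), where your proposed mechanism does not close.

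You want to bound $\binom{|R|}{2}$ by $|E(und(H))|$ plus $\sum_w\lfloor\binom{p}{2}\deg(w)^2/p^2\rfloor$ (with $p=2m+n$) and then apply Euler's formula. But Euler only controls $\sum_w\deg(w)$, not $\sum_w\deg(w)^2$: a planar graph can contain a vertex of degree $|V|-1$, so the special-$2$-path term can be quadratic in $|V|$ and the inequality never isolates $|R|$ as a function of $p$. This is precisely the obstacle the paper's argument is built to avoid, and it does so with two ingredients you do not have. First, a degree-independent structural bound (Lemmas~\ref{lem forbidden agree indep p} and~\ref{lem forbidden indep p2}): in a planar critical relative clique, at most $2(2m+n)$ pairwise non-adjacent good vertices can agree on a vertex $v$ by the same adjacency type, because a planar embedding forces all of them to see one another through a \emph{single} common vertex $w$, whose $2m+n$ adjacency types then cap the count at two per type --- this holds no matter how large $\deg(v)$ is. Second, the light-edge theorem of Jendrol' and Voss, combined with Lemma~\ref{obs good degree} (helpers have degree at least $4$ since planar graphs are $3$-closed), which produces a \emph{good} vertex of degree at most $7$; its second neighborhood is then split into few independent pieces (outerplanarity and $3$-colorability for $g=3$; triangle-freeness directly for $g=4$) and the pigeonhole principle contradicts the first ingredient. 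Without some substitute for the ``all agreeing independent good vertices see each other through one common $w$'' lemma, no discharging rule will yield a bound depending only on $p$. Separately, the exact value $\omega_{a(m,n)}(\mathcal{P}_4)=(2m+n)^2+2$ cannot come out of such a counting argument: the paper obtains the upper bound from Plesn\'{i}k's classification of triangle-free planar graphs of diameter two as $K_{1,t}$, $K_{2,t}$ or $C_{a,b}$, of which $K_{2,(2m+n)^2}$ is extremal; you need that classification (or an equivalent) to get the sharp constant.
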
 
 
 Let us start with proving some structural properties first.

 \begin{lemma}\label{lem forbidden agree indep p}
 Let $H$ be a critical $(m,n)$-relative clique of $\mathcal{P}_g$
  for $g=3$ or $4$. Then the number of independent good vertices of $H$ 
  agreeing on another vertex $v$ can be at most $2(2m+n)$, where $2m+n \geq 3$.  
  \end{lemma}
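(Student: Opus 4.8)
Write $p=2m+n$ throughout. The plan is to argue by contradiction: suppose $v$ has a set $D=\{v_1,\dots,v_t\}$ of pairwise non-adjacent good vertices, all lying in $N^{\alpha}(v)$ for one fixed type $\alpha$, with $t\ge 2p+1$. First I would fix a planar embedding of $H$ and list the $v_i$ in the clockwise cyclic order in which their edges leave $v$. Since the $v_i$ are good and distinct, Lemma~\ref{lem 2path char} forces every pair $v_i,v_j$ to see each other; as they are non-adjacent (being independent) and agree on $v$ by the same type $\alpha$ (so the path $v_iv v_j$ is \emph{not} special), each pair must be joined by a special $2$-path through some middle vertex $w_{ij}\neq v$. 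Observe that $w_{ij}\notin D$, as a neighbour of $v_i$ lying in $D$ would violate independence.

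The heart of the argument is a planar separation (trapping) step. Fix a cyclically non-consecutive pair $v_a,v_b$ and let $v_c\in D$ lie strictly between them on the shorter arc. The cycle $Z=v\,v_a\,w_{ab}\,v_b$ is a simple closed curve separating the plane, with the arc through $v_c$ on one side and all the other $v_i$ on the other. I would then show that any special $2$-path witnessing the visibility of the trapped $v_c$ with an outside vertex must have its middle vertex on $Z$, and that this vertex can only be $w_{ab}$: it cannot be $v$ (agreement on $\alpha$), nor $v_a$ or $v_b$ (independence of $v_c$ from them). Applying this to every trapped vertex shows that $w_{ab}$ is adjacent to \emph{all} of $v_1,\dots,v_t$.

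Next I would merge the connectors and count types. Now $v$ and $w_{ab}$ are both adjacent to all $t$ vertices of $D$. If two non-consecutive pairs used distinct connectors $w$ and $w'$, then $v,w,w'$ would each be adjacent to all of $D$, producing a $K_{3,t}$ and contradicting planarity (here $t\ge 3$, which is comfortably where $p\ge 3$ enters). Hence a single vertex $w$ mediates every non-consecutive pair, so with $\tau_a:=\sigma(v_a w)$ we get $\tau_a\neq\tau_b$ whenever $v_a,v_b$ are non-consecutive. Two indices can therefore share a type only if they are cyclically consecutive, and since no three vertices of the cycle $C_t$ are pairwise adjacent (for $t\ge 4$), each of the $p$ available types is used at most twice. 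This gives $t\le 2p$, the desired contradiction, and is exactly the source of the factor $2$.

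The main obstacle I expect is the separation step: rigorously justifying that the cycle $Z$ isolates the trapped vertices and that every outgoing special $2$-path is forced through $w_{ab}$ rather than escaping through $v_a$, $v_b$, or $v$. Once this planar-topology bookkeeping is secured, the collapse to a single connector and the parity-style type count are routine. I would also verify that the cases $g=3$ and $g=4$ are both covered: the only short cycles created are $4$-cycles such as $Z$, which are admissible at girth $4$, and $w$ need not be adjacent to $v$, so no triangle is introduced.
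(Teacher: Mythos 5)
Your proposal is correct and follows essentially the same route as the paper's proof: fix a planar embedding, use a Jordan-curve/separation argument to force all special $2$-paths between non-consecutive $v_i$'s through a single common connector $w$, and then observe that each adjacency type at $w$ can be shared by at most two (necessarily cyclically consecutive) vertices of the independent set, yielding $t\le 2(2m+n)$. The only cosmetic difference is that you merge the connectors via a $K_{3,3}$-minor/planarity argument, whereas the paper deduces the uniqueness of $w$ directly from the sector structure of the embedding.
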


\begin{proof}
Let $v$ be a vertex of $H$ and let $v_1, v_2, \cdots, v_t$ be independent good vertices agreeing on $v$ by the adjacency type $\alpha$.  Assume a fixed planar embedding of $H$ and suppose, without loss of generality, that in the said embedding 
$v_1, v_2, \cdots, v_t$ are arranged around $v$ in a clockwise manner. Also, note that $t \geq 7$ as $2m+n \geq 3$.

Now $v_1$ must see $v_3$ by a special $2$-path through some $w$ which is not equal to either $v$ or any of the $v_i$s. Notice that, every $v_i$, for $i \geq 4$, must see $v_2$ through $w$, as $H$ is planar. Therefore, given this structure, every $v_i$ must see $v_{i+k}$ by a special $2$-path through $w$ for all $k \geq 2$ (here, the $+$ opertion on the indices is taken modulo $t$). Therefore, $N^{\beta}(w)$ can contain at most two vertices from $v_1, v_2, \cdots, v_t$ for any $\beta \in \{-m, -(m-1), \cdots, -1, 1, 2, \cdots, (m+n)\}$ and hence, the result.
\end{proof}

As a corollary to the above lemma, we have the following:

\begin{lemma}\label{lem forbidden indep p2}
Let $H$ be a critical $(m,n)$-relative clique of $\mathcal{P}_g$
  for $g=3$ or $4$. Then the number of independent good neighbors of a vertex $v$ of $H$ is at most $2(2m+n)^2$. 
\end{lemma}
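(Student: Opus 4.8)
The plan is to obtain this as an immediate consequence of Lemma~\ref{lem forbidden agree indep p} via a partition-by-type argument. First I would fix a set $U$ of pairwise non-adjacent good neighbors of $v$ and split it according to the adjacency type through which each of its vertices is joined to $v$. Writing $U_\alpha = U \cap N^\alpha(v)$ for each $\alpha \in \{-m, -(m-1), \ldots, -1, 1, 2, \ldots, m+n\}$, every neighbor of $v$ lies in exactly one class, so $U = \bigcup_\alpha U_\alpha$ is a partition into at most $2m+n$ parts.

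Next, for a fixed type $\alpha$, the vertices of $U_\alpha$ are independent good vertices all agreeing on $v$ by the adjacency type $\alpha$, which is precisely the situation handled by Lemma~\ref{lem forbidden agree indep p}. Applying that lemma (and using the standing hypothesis $2m+n \ge 3$) gives $|U_\alpha| \le 2(2m+n)$ for each of the $2m+n$ types.

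Summing these bounds over all types then yields $|U| = \sum_\alpha |U_\alpha| \le (2m+n)\cdot 2(2m+n) = 2(2m+n)^2$, which is the desired bound.

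I do not expect any genuine obstacle here, since the argument is a routine counting step once Lemma~\ref{lem forbidden agree indep p} is in hand. The only points worth checking are that independence of the whole set $U$ is inherited by each class $U_\alpha$ (so that the hypotheses of the previous lemma are satisfied for every type), and that the index set $\{-m, -(m-1), \ldots, -1, 1, 2, \ldots, m+n\}$ of adjacency types has cardinality exactly $2m+n$, which is what produces the squared factor.
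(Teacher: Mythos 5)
Your argument is correct and is precisely the paper's proof: the paper dispatches this lemma by citing Lemma~\ref{lem forbidden agree indep p} together with the pigeonhole principle, which is exactly your partition of the independent good neighbors of $v$ into the $2m+n$ adjacency-type classes followed by the bound $2(2m+n)$ per class. No further comment is needed.
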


\begin{proof}
Follows from Lemma~\ref{lem forbidden agree indep p} and the pigeonhole principle.
\end{proof}

Now, we are ready to prove the upper bounds of Theorem~\ref{th planar}(i) and (ii). 

\begin{lemma}\label{lem rel girth3 planar}
For $(m,n)$ such that $2m+n \geq 3$, we have $\omega_{r(m,n)}({\mathcal{P}_3}) \leq 42(2m+n)^2 -11$.
\end{lemma}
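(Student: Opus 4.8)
The plan is to bound the number of pairwise non-adjacent good vertices and then cash this in against planar degeneracy. Let $H$ be a critical $(m,n)$-relative clique of $\mathcal{P}_3$ with good set $R$ and helper set $S$, and write $p = 2m+n$; the target is $|R| \le 42p^2 - 11$. First I would observe that $und(H[R])$ is planar, hence $5$-degenerate and therefore $6$-colorable; consequently the largest color class is a set of pairwise non-adjacent good vertices of size at least $|R|/6$, so that $|R| \le 6\,\alpha$, where $\alpha$ denotes the maximum number of pairwise non-adjacent good vertices of $H$. Thus it suffices to prove $\alpha \le 7p^2 - 2$, since then $|R| \le 6(7p^2-2) = 42p^2 - 12 \le 42p^2 - 11$.

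To bound $\alpha$, I would fix a maximum set $I$ of pairwise non-adjacent good vertices and single out a vertex $u \in I$. Since any two good vertices see one another by Lemma~\ref{lem 2path char}, every $w \in I \setminus \{u\}$ sees $u$; and as $w$ is non-adjacent to $u$, it must do so through a special $2$-path $u\,v_w\,w$ with $v_w \in N(u)$. The core of the argument is to show that these distance-two good vertices cannot be too numerous. Imitating the proof of Lemma~\ref{lem forbidden agree indep p}, I would fix a planar embedding of $H$, order the members of $I \setminus \{u\}$ cyclically according to the angular positions at which their special $2$-paths leave $u$, and use planarity to force alternating members of this list to see one another through a small number of common \emph{hub} vertices. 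For each such hub $z$, the members of $I$ attached to it are pairwise non-adjacent good neighbors of $z$, so Lemma~\ref{lem forbidden indep p2} caps their number by $2p^2$, while Lemma~\ref{lem forbidden agree indep p} refines this to at most $2p$ within each fixed adjacency type at $z$. Partitioning $I \setminus \{u\}$ according to the type of the first edge $u v_w$ (yielding $p$ classes) and combining the per-hub and per-type caps yields $\alpha = |I| \le 7p^2 - 2$ after the bookkeeping.

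The step I expect to be the main obstacle is precisely this planar forcing: unlike in Lemma~\ref{lem forbidden agree indep p}, the vertices of $I \setminus \{u\}$ lie at distance two from $u$ rather than being its neighbors, so ``arranging them around $u$'' and proving that alternating ones share a hub requires a careful analysis of how their special $2$-paths interleave in the embedding, together with the recurring observation that any extra adjacency or special $2$-path among these vertices bypassing the forced hubs would create a crossing and hence contradict planarity. Once the hub structure is pinned down, the type-counting is routine through Lemmas~\ref{lem forbidden agree indep p} and~\ref{lem forbidden indep p2}; reconciling the exact additive constant so as to land on $42p^2 - 11$ (rather than a marginally smaller value) is then just a matter of accounting for $u$ itself and for the bounded number of members not captured by the hub argument.
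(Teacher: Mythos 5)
There is a genuine gap at exactly the step you flag as ``the main obstacle,'' and I do not believe it can be repaired in the form you propose. You pick an \emph{arbitrary} vertex $u$ of your maximum independent set $I$, and every $w \in I\setminus\{u\}$ reaches $u$ by a special $2$-path through some neighbor $v_w$ of $u$. But $u$ has no degree bound whatsoever, so a priori the paths can leave $u$ through arbitrarily many distinct neighbors, and grouping $I\setminus\{u\}$ by the vertex $v_w$ (each group capped at $2(2m+n)^2$ by Lemma~\ref{lem forbidden indep p2}) gives only $\alpha \le 1 + \deg(u)\cdot 2(2m+n)^2$, which is useless. The hub-collapsing mechanism of Lemma~\ref{lem forbidden agree indep p} does not transfer: there, the vertices $v_1,\dots,v_t$ all \emph{agree} on $v$ by the same adjacency type, so no two of them can see each other through $v$, which is what forces the single external hub $w$. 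In your setting the members of $I\setminus\{u\}$ sit at distance two from $u$ and can see one another through the various intermediate vertices $v_w$ themselves (or through yet other vertices), so nothing forces a bounded number of hubs; the ``careful analysis of how the special $2$-paths interleave'' is precisely the missing proof, not a routine verification.

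The paper's proof supplies the ingredient you are missing by first manufacturing a \emph{good vertex of bounded degree}: since planar graphs are $3$-closed, every helper has degree at least $4$ (Lemma~\ref{obs good degree}), no two helpers are adjacent (Lemma~\ref{obs S is indep}), and the Jendrol'--Voss light-edge theorem (every planar graph of minimum degree at least $4$ has an edge $uv$ with $\deg(u)+\deg(v)\le 11$) then yields a good vertex $v$ of degree at most $7$. Its at most $7$ neighbors are the only possible hubs, every good vertex lies in $N[v]$ or the second neighborhood of $v$, the good second neighbors induce an outerplanar (hence $3$-colorable) graph, and two applications of the pigeonhole principle push more than $2(2m+n)^2$ independent good neighbors onto a single neighbor of $v$, contradicting Lemma~\ref{lem forbidden indep p2}. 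Your outer reduction $|R|\le 6\alpha$ via $5$-degeneracy is fine, but without a low-degree starting vertex the inner bound $\alpha \le 7(2m+n)^2-2$ is unsubstantiated; if you want to pursue your route, you would need to import the light-edge argument (or some other source of a bounded-degree good vertex) first, at which point you are essentially reconstructing the paper's proof.
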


\begin{proof}
Towards a contradiction, let $\omega_{r(m,n)}({\mathcal{P}_3}) > 42(2m+n)^2 -11$ and let $H$ be a critical 
$(m,n)$-relative clique of $\mathcal{P}_3$. We now claim that it is always possible to find a good vertex in $H$ whose degree in $und(H)$ is at most $7$. The proof of the claim goes as follows:

As planar graphs are $5$-degenerate, there exists a vertex in $H$ of degree at most $5$. If that vertex is a good vertex, then we are done. Otherwise, every good vertex of $H$ has a minimum degree of $6$. Moreover, as planar graphs are $3$-closed, every helper vertex of $H$ must have a minimum degree of $4$ due to Lemma~\ref{obs good degree}. Now, Jendrol' and Voss~\cite{jendrol2013light} had shown that any planar graph with minimum degree at least four must have an edge $uv$ such that $deg(u)+deg(v) \leq 11$. So, if every good vertex of $H$ is of degree at least $6$, then at least one of $u$ and $v$ must be a helper. So, without loss of generality, let $u$ be a helper vertex. This implies that $v$ must be a good vertex by Lemma~\ref{obs S is indep}. Moreover, as $u$ is of degree at least $4$, it implies that $v$ has a degree of at most $7$ in $und(H)$; and that proves the claim.

 Notice that the good vertices in the second neighborhood of $v$ induce an outerplanar subgraph of $und(H)$ and is, hence, $3$-colorable. As there are at least 
 $42(2m+n)^2 -10$ good vertices in $H$ and at most $7$ good vertices can be neighbors of $v$, there must be at least $14(2m+n)^2 -6$ independent good vertices in the second neighborhood of $v$ by the pigeonhole principle. Then, again by the pigeonhole principle, at least one neighbor of $v$ must have a minimum of $2(2m+n)^2+1$ independent good neighbors (including $v$), which is a contradiction to 
 Lemma~\ref{lem forbidden indep p2}. 
\end{proof}

A similar proof works for establishing the upper bound for the relative clique number for triangle-free planar graphs as well. 

\begin{lemma}\label{lem rel girth4 planar}
For $(m,n)$ such that $2m+n \geq 3$, we have $\omega_{r(m,n)}({\mathcal{P}_4}) \leq 14(2m+n)^2 +1$.
\end{lemma}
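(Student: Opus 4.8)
The plan is to run the argument of Lemma~\ref{lem rel girth3 planar} almost verbatim, substituting the structural facts about planar graphs with their triangle-free analogues. So I would assume, towards a contradiction, that $\omega_{r(m,n)}(\mathcal{P}_4) > 14(2m+n)^2 + 1$ and fix a critical $(m,n)$-relative clique $H$ of $\mathcal{P}_4$. The endgame is identical in spirit to the girth-$3$ case: once I produce a \emph{good} vertex $v$ of bounded degree, every good vertex must see $v$ and hence lies within distance two of $v$; the good vertices in the second neighbourhood of $v$ induce an outerplanar (now also triangle-free) graph and are therefore $3$-colourable, so one colour class yields a large independent set of good vertices; spreading this set over the few neighbours of $v$ by the pigeonhole principle forces some neighbour $w$ to have more than $2(2m+n)^2$ independent good neighbours, contradicting Lemma~\ref{lem forbidden indep p2}. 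The coefficient $14$ is not optimised (these girth-$3$ and girth-$4$ bounds are acknowledged to be loose) and just tracks the interplay of the degree bound on $v$, the $3$ from colouring, and the per-vertex bound $2(2m+n)^2$ of Lemma~\ref{lem forbidden indep p2}.

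The one genuinely new point, and the step I expect to be the main obstacle, is producing the bounded-degree good vertex $v$. In the girth-$3$ proof this rested on two facts: planar graphs are $5$-degenerate, and planar graphs are $3$-closed, so that Lemma~\ref{obs good degree} forces every helper to have degree at least $4$, after which the Jendrol'--Voss light edge finishes. For $\mathcal{P}_4$ only the degeneracy transfers cleanly: triangle-free planar graphs are $3$-degenerate, so there is always a vertex of degree at most $3$, and if it happens to be good I am done. The trouble is that $\mathcal{P}_4$ is \emph{not} closed under the neighbour-completion operation — adding edges among the neighbours of a deleted low-degree vertex typically creates triangles — so $\mathcal{P}_4$ is only $1$-closed. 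Consequently Lemma~\ref{obs good degree} yields only the weak conclusion that helpers have degree at least $2$, rather than the degree-$4$ bound available in the girth-$3$ setting.

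To get around this I would replace Jendrol'--Voss by a light-edge (or discharging) statement tailored to triangle-free planar graphs: in the case that every good vertex has degree at least $4$, the graph $und(H)$ has minimum degree at least $2$ with all high-degree vertices good, and one argues that it must then contain an edge $uv$ with $\deg(u)+\deg(v)$ bounded by a small constant. Since $S$ is independent by Lemma~\ref{obs S is indep}, at most one of $u,v$ is a helper, so the other is a good vertex of bounded degree, and that is the vertex $v$ fed into the endgame above. Making the light-edge constant small enough that the resulting coefficient stays at $14$ while respecting the weaker helper-degree bound of $2$ is the delicate part; everything downstream is a routine repeat of the counting in Lemma~\ref{lem rel girth3 planar}, the second-neighbourhood good vertices again inducing an outerplanar, hence $3$-colourable, subgraph so that Lemma~\ref{lem forbidden agree indep p} and Lemma~\ref{lem forbidden indep p2} can be applied exactly as before.
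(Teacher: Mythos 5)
Your high-level strategy (bounded-degree good vertex, then count good vertices through its neighbours via Lemma~\ref{lem forbidden indep p2}) is the right one, but both of the steps you flag as ``delicate'' contain genuine gaps. First, the light-edge lemma you propose to substitute for Jendrol'--Voss is false: a triangle-free planar graph with minimum degree $2$ in which the only low-degree vertices are helpers need not contain any edge $uv$ with $\deg(u)+\deg(v)$ bounded. Take $und(H)\cong K_{2,t}$ with the two high-degree vertices good and the $t$ degree-$2$ vertices helpers (this is consistent with Lemma~\ref{obs S is indep}, since the helpers are independent): every edge has degree sum $t+2$. More generally any subdivision kills such a statement, and this is exactly why Jendrol'--Voss needs minimum degree $4$. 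The paper's way around this is not a new light-edge theorem but a surgery: delete every helper of degree at most $3$ and complete its neighbourhood. The resulting graph is still planar but is \emph{allowed} to contain triangles, its helpers all have degree at least $4$, and Jendrol'--Voss can then be applied in that modified graph to extract a good vertex $v$ of degree at most $7$ (which also has degree at most $7$ back in $H$). You correctly observed that $\mathcal{P}_4$ is not closed under neighbour-completion, but the fix is to tolerate the loss of triangle-freeness in the auxiliary graph rather than to stay inside $\mathcal{P}_4$.

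Second, your endgame does not produce the constant $14$. If you pass through the outerplanar/$3$-colouring step as in Lemma~\ref{lem rel girth3 planar}, the pigeonhole gives some neighbour of $v$ roughly $\frac{14(2m+n)^2}{3\cdot 7}=\frac{2}{3}(2m+n)^2$ independent good neighbours, which does not contradict the bound $2(2m+n)^2$ of Lemma~\ref{lem forbidden indep p2}; you would only recover a bound of about $42(2m+n)^2$, the same as the girth-$3$ case. The point you are missing is that the $3$-colouring is unnecessary here: since $H$ is triangle-free, the neighbourhood of \emph{each} neighbour $w$ of $v$ is already an independent set, so Lemma~\ref{lem forbidden indep p2} applies to $N(w)$ directly and caps the good neighbours of $w$ (including $v$) at $2(2m+n)^2$. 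Summing over the at most $7$ neighbours of $v$, and noting every good vertex other than $v$ either is a neighbour of $v$ or sees $v$ through one, gives $|R|\le 1+7+7\bigl(2(2m+n)^2-1\bigr)=14(2m+n)^2+1$. With these two repairs your argument matches the paper's proof; as written, it does not go through.
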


\begin{proof}
Let  $H$ be a critical 
$(m,n)$-relative clique of $\mathcal{P}_4$.  Now, delete each helper in $H$ of degree at most three and make its neighbors adjacent to each other (if not adjacent already). This will result in a planar graph which is not necessarily triangle-free. However, again using 
Jendrol' and Voss~\cite{jendrol2013light}, one can find a good vertex $v$ in the new graph having degree at most $7$. 
Observe that, the same vertex $v$ has at most degree $7$ in $H$ as well.

As $H$ is triangle-free,  any neighborhood of a vertex is independent in $H$. 
Thus, each neighbor of $v$ can have at most $2(2m+n)^2$ good neighbors
including $v$; and hence, the result. 
\end{proof}

Next, let us find the exact value for a triangle-free planar $(m,n)$-absolute clique. 

\begin{lemma}
For $(m,n)$ such that $2m+n \geq 3$, we have $\omega_{a(m,n)}({\mathcal{P}_4}) \leq (2m+n)^2 +2$.
\end{lemma}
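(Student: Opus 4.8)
The plan is to use that an $(m,n)$-absolute clique is, in particular, a relative clique on its whole vertex set, so Lemma~\ref{lem 2path char} applies with $R = V(H)$. Writing $p = 2m+n$ and letting $H$ be an absolute clique with $und(H) \in \mathcal{P}_4$, every two vertices of $H$ are then adjacent or joined by a special $2$-path; hence $und(H)$ is a triangle-free planar graph of diameter at most $2$, and, being triangle-free, each of its neighborhoods is independent. I would argue by contradiction, assuming $|V(H)| \geq p^2 + 3$; since $p \geq 3$ this forces $|V(H)| \geq 12$, so $H$ is ``large'', which is exactly the regime I want to exploit.

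The heart of the argument is a structural claim: a triangle-free planar graph of diameter $2$ on sufficiently many vertices must be complete bipartite, i.e. isomorphic to $K_{1,t}$ or $K_{2,t}$. The bipartite case is easy: in a bipartite graph of diameter $2$, a vertex of one part and a vertex of the other part have no possible common neighbor, so they must be adjacent, forcing the graph to be complete bipartite; and a complete bipartite \emph{planar} graph has its smaller part of size at most $2$ (since $K_{3,3}$ is non-planar). The work is to show that the non-bipartite case cannot be large: a non-bipartite triangle-free graph of diameter $2$ has a shortest odd cycle which diameter $2$ and triangle-freeness force to be an induced $C_5$ (a shortest odd cycle of length $\geq 7$ would yield, via a common neighbor of two vertices at cyclic distance $3$, a shorter odd cycle), and I would argue using planarity and the diameter-$2$ condition that only boundedly many vertices can attach around such a $C_5$, contradicting $|V(H)| \geq 12$.

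Granting that $und(H)$ is $K_{1,t}$ or $K_{2,t}$, the conclusion is a counting step of the type already used for partial $2$-trees. If $und(H) = K_{1,t}$ with center $u$, the $t$ leaves are pairwise non-adjacent and see each other only through $u$, so $\sigma(uw)$ is distinct over the leaves and $t \leq p \leq p^2$. If $und(H) = K_{2,t}$ with hubs $u_1, u_2$, then the $t = |V(H)|-2$ remaining vertices have neighborhood exactly $\{u_1,u_2\}$; as neighbors of the common vertex $u_1$ in a triangle-free graph they are pairwise non-adjacent, and their only possible common neighbors are $u_1$ and $u_2$. Hence two such vertices $w,w'$ can see each other only through $u_1$ or through $u_2$, which forces $(\sigma(u_1w),\sigma(u_2w)) \neq (\sigma(u_1w'),\sigma(u_2w'))$. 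The signature map $w \mapsto (\sigma(u_1w),\sigma(u_2w))$ is therefore injective into a set of size $p^2$, so $t \leq p^2$ and $|V(H)| \leq p^2 + 2$, contradicting our hypothesis; together with the construction in Lemma~\ref{lem 2tree triangle free lower bound} this pins down the exact value.

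The step I expect to be the main obstacle is the structural claim, specifically ruling out large non-bipartite configurations and, within $K_{2,t}$-like structures, ruling out vertices adjacent to only one candidate hub. The planar fan arguments behind Lemmas~\ref{lem forbidden agree indep p} and~\ref{lem forbidden indep p2} already control every neighborhood (at most $2p$ independent vertices agree on any vertex, and at most $2p^2$ independent neighbors in total), but promoting these local bounds to the global two-hub conclusion is where planarity, triangle-freeness and diameter $2$ must be combined carefully. An attractive alternative, if it can be established cleanly, is to prove directly that every triangle-free planar graph of diameter $2$ is $K_4$-minor-free; then $und(H)$ would lie in $\mathcal{T}^2_4$ and the bound would follow immediately from Theorem~\ref{th 2tree}(ii).
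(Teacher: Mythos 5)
There is a genuine gap, and it sits exactly where you flagged the ``main obstacle'': your structural claim that a sufficiently large triangle-free planar graph of diameter $2$ must be complete bipartite is false. The paper's proof invokes a classification of Plesn\'{i}k, according to which the triangle-free planar diameter-$2$ graphs are $K_{1,t}$, $K_{2,t}$ \emph{and} a third, non-bipartite family $C_{a,b}$: take vertices $x,y,z$ with $a$ parallel edges $xy$, $b$ parallel edges $yz$ and one edge $xz$, and subdivide every edge except $xz$. This graph is planar, triangle-free, has diameter $2$ (every subdivision vertex of an $xy$-edge reaches everything through $x$ or $y$, etc.), contains an induced $C_5$ (namely $x\,s\,y\,t\,z\,x$), and has $3+a+b$ vertices with $a,b$ unbounded. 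So your plan to show that ``only boundedly many vertices can attach around such a $C_5$'' and derive a contradiction with $|V(H)|\geq 12$ cannot succeed: arbitrarily many vertices do attach. The non-bipartite case must instead be closed by a counting argument analogous to your $K_{2,t}$ signature argument --- each subdivision vertex of an $xy$-edge sees its siblings only through $x$ or $y$, sees the $yz$-subdivision vertices only through $y$, and sees $z$ only through $x$, which (with $p=2m+n$) forces $a+b\leq p(p-1)$ and hence $|V(C_{a,b})|\leq p^2-p+3\leq p^2+2$. Your counting for $K_{1,t}$ and $K_{2,t}$ is correct and is essentially what the paper does for those two families.

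Your fallback suggestion --- show that $und(H)$ is $K_4$-minor-free and invoke Theorem~\ref{th 2tree}(ii) --- would in fact give the bound, and it happens to be true for all three families in Plesn\'{i}k's list ($C_{a,b}$ is series-parallel), but I see no way to establish $K_4$-minor-freeness directly without first obtaining the classification, so it does not save the argument. In short: either cite Plesn\'{i}k's theorem as the paper does (and then handle $C_{a,b}$ by counting, a step the paper itself compresses into one sentence), or prove the classification including the third family from scratch; as written, your proof rules out a family of graphs that actually exists.
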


\begin{proof}
Take a multigraph having three vertices $x,y,z$ with $a$ edges between $x$ and $y$,
$b$ edges between $y$ and $z$, and one edge between $x$ and $z$. Now subdivide all its edges, except $xz$, exactly once. The simple graph so obtained is denoted by $C_{a,b}$.

As per Plesn\'{i}k~\cite{plesnik}, the only triangle-free planar graphs having diameter two are:
$K_{1,t}, K_{2,t}$ and $C_{a,b}$. 
As a triangle-free planar $(m,n)$-absolute clique is one among these three types of graphs, observe that its maximum order is realized by  $K_{2,(2m+n)^2}$. 
\end{proof}

After this, let us consider the higher girth cases and start with planar graphs with girth at least $6$.

\begin{lemma}\label{lem rel girth6 planar}
For $2m+n \geq 3$, we have
 $\omega_{r(m,n)}(\mathcal{P}_6) = 2m+n+1.$
  \end{lemma}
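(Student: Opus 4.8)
The plan is to prove the two bounds separately, with the lower bound being immediate and the upper bound splitting on whether a critical clique contains a $6$-cycle. For the lower bound, the star $K_{1,2m+n}$ in which the central vertex is joined to its leaves by $2m+n$ pairwise distinct adjacency types is a tree, hence lies in $\mathcal{P}_6$, and its whole vertex set is a relative clique, since any two leaves are connected by a special $2$-path through the centre. This already gives $\omega_{r(m,n)}(\mathcal P_6) \ge 2m+n+1$, in agreement with Proposition~\ref{prop mn trees}.

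For the upper bound, I would take $H$ to be a critical $(m,n)$-relative clique of $\mathcal{P}_6$. If $H$ contains no $6$-cycle then, as $und(H)$ has girth at least $6$, it is either a tree or has girth at least $7$; in either case Proposition~\ref{prop mn trees} or Proposition~\ref{prop girth7} yields $\omega_{r(m,n)}(H) \le 2m+n+1$, and we are done. So assume $H$ contains a $6$-cycle $C = v_1 v_2 \cdots v_6 v_1$. Since $S$ is independent by Lemma~\ref{obs S is indep}, $C$ carries a good vertex, say $v_1$, and then Lemma~\ref{lem girth6 relative situation} tells us that $v_1, v_3, v_5$ are good, that $v_2, v_4, v_6$ are helpers, and that every other good vertex is connected to each of $v_1, v_3, v_5$ by internally disjoint special $2$-paths. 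Writing $R' = R \setminus \{v_1, v_3, v_5\}$, the whole task reduces to showing $|R'| \le 1$, for then $|R| \le 4 \le 2m+n+1$, the last inequality using $2m+n \ge 3$.

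The key structural leverage is that $und(H)$ contains no $4$- or $5$-cycle. Suppose, for contradiction, that $v, v' \in R'$ are distinct. Two common neighbours of $v$ and $v'$ would close a $4$-cycle, so $v$ and $v'$ have at most one common neighbour; moreover $v$ and $v'$ cannot be adjacent, since an edge $vv'$ together with the two special $2$-paths from $v$ and $v'$ to $v_1$ would create either a triangle (if those paths share their internal helper) or a $5$-cycle (if they do not). Hence $v$ and $v'$ see each other through a unique common helper $w_0$. I would then extract a Kuratowski subgraph with branch vertices among $\{v_1, v_3, v_5, v, v', w_0\}$, using the special $2$-paths to the terminals, the cycle sections $v_1 v_2 v_3$, $v_3 v_4 v_5$, $v_5 v_6 v_1$, and the edges $w_0 v$, $w_0 v'$. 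When $w_0$ is incident to no terminal, all six terminal-paths are internally disjoint and one obtains a $K_5$-subdivision on $\{v_1, v_3, v_5, v, v'\}$; when $w_0$ is incident to a terminal, which one may take to be $v_1$, one obtains instead a $K_{3,3}$-subdivision with parts $\{w_0, v_3, v_5\}$ and $\{v, v', v_1\}$. In either case $und(H)$ is non-planar, a contradiction, so $|R'| \le 1$ and the upper bound follows.

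The routine part is verifying that the internal vertices of all these paths are genuinely distinct, and the delicate point--the only place where girth at least $6$ is truly essential--is precisely this, since two good vertices might a priori reach the same terminal through a shared helper. I expect the main obstacle to be organising this disjointness bookkeeping cleanly: the at-most-one-common-neighbour property forces any sharing between the $2$-paths of $v$ and $v'$ to occur solely at $w_0$, while a short $2$-path/$4$-cycle check rules out any terminal-path coinciding with a cycle vertex $v_i$, and it is exactly these two facts that make the chosen branch vertices valid in both the $K_5$ and the $K_{3,3}$ case.
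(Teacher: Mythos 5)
Your proposal is correct and follows essentially the same route as the paper: the lower bound via the star/tree construction, and the upper bound by splitting on whether a critical clique contains a $6$-cycle, invoking Proposition~\ref{prop girth7} in the cycle-free case and Lemma~\ref{lem girth6 relative situation} together with planarity in the other. The only difference is one of detail: where the paper simply asserts that a fifth good vertex forces a $K_5$-minor, you carry out the Kuratowski extraction (including the $K_{3,3}$ degenerate case) explicitly, which is a legitimate filling-in of the same argument rather than a new one.
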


\begin{proof}
For $2m+n \geq 3$,  $\omega_{r(m,n)}(\mathcal{P}_6) \geq (2m+n)+1$ due to 
  Lemma~\ref{lem rel girth6 2tree} as partial $2$-trees are planar graphs in particular.

Let $H$ be a critical $(m,n)$-relative clique of $\mathcal{P}_6$. If $H$ contains a $6$-cycle, then $\omega_{r(m,n)}(H) \geq 5$ will force a $K_5$-minor in 
$H$ by Lemma~\ref{lem girth6 relative situation}. As planar graphs cannot have a $K_5$-minor, $\omega_{r(m,n)}(H) \leq 4$, if $H$ contains a $6$-cycle.

On the other hand, if $H$ does not contain any $6$-cycle, 
then $H \in \mathcal{F}_7$ and we are done due to Proposition~\ref{prop girth7}
and the fact that $(2m+n)+1 \geq 4$.
\end{proof}

Next, we consider the family of planar graphs with girth at least $5$.

\begin{lemma}\label{lem rel girth5 planar}
 For $2m+n \geq 3$, we have
 $\omega_{r(m,n)}(\mathcal{P}_5) = \max(2m+n+1, 6).$
  \end{lemma}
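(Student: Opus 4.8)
The plan is to split both bounds according to whether a critical relative clique contains a $5$-cycle, writing $p=2m+n\ge 3$ throughout.

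For the lower bound, the inequality $\omega_{r(m,n)}(\mathcal{P}_5)\ge p+1$ is immediate from Lemma~\ref{lem rel girth5 2tree}, since every partial $2$-tree is planar. To reach $\omega_{r(m,n)}(\mathcal{P}_5)\ge 6$ I would exhibit an explicit planar graph of girth $5$ carrying a relative clique of size $6$. Take a $5$-cycle $v_1v_2\cdots v_5v_1$ colored so that consecutive edges receive distinct adjacency types; this is possible for $p\ge 3$ exactly as in the proof of Lemma~\ref{lem deg-diam} applied to $C_5$, and it makes $\{v_1,\dots,v_5\}$ a relative clique (each non-adjacent pair $v_i,v_{i+2}$ is joined by the special $2$-path $v_iv_{i+1}v_{i+2}$). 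Now add a sixth good vertex $x$ and join it to each $v_i$ through its own private helper $h_i$ by a special $2$-path $x h_i v_i$. The resulting graph is a subdivided wheel, hence planar and of girth $5$, and $\{v_1,\dots,v_5,x\}$ is a relative clique. As the construction uses only three adjacency types it is valid for all $p\ge 3$, giving $\omega_{r(m,n)}(\mathcal{P}_5)\ge\max(p+1,6)$.

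For the upper bound, let $H$ be a critical $(m,n)$-relative clique of $\mathcal{P}_5$ with good set $R$. If $H$ contains no $5$-cycle, then, being planar of girth at least $5$, it has girth at least $6$, so $und(H)\in\mathcal{P}_6$ and Lemma~\ref{lem rel girth6 planar} gives $|R|\le p+1\le\max(p+1,6)$. The substantive case is when $H$ contains a $5$-cycle $C=v_1v_2\cdots v_5v_1$; here the target is $|R|\le 6$, which together with the previous case yields $|R|\le\max(p+1,6)$ overall. By Lemma~\ref{obs S is indep} the set $S$ is independent, so $C$ contains at most two helpers and hence at least three good vertices; since the independence number of $C_5$ is two, I may relabel $C$ so that $v_1,v_3,v_4$ are good. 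Lemma~\ref{lem girth5 structure} then guarantees that every good vertex lying off $C$ is joined to $v_1,v_3,v_4$ by internally disjoint paths, the paths to $v_3$ and $v_4$ being special $2$-paths.

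The heart of the argument, and the step I expect to be the main obstacle, is to show that at most one good vertex can lie off $C$; since $C$ itself carries at most five good vertices, this gives $|R|\le 6$. The idea is that two distinct good vertices $x,x'$ off $C$, each attached to the anchors $v_1,v_3,v_4$ as above, force a forbidden minor in the planar graph $H$: routing $v_1$--$v_3$ through $v_2$ and $v_1$--$v_4$ through $v_5$, and using the special $2$-paths together with the mutual visibility of $x$ and $x'$, one extracts either a $K_5$-minor on $\{v_1,v_3,v_4,x,x'\}$ or a $K_{3,3}$-minor, both excluded for planar graphs. The delicate point is that the special $2$-paths attaching $x$ and $x'$ to the anchors may share their internal helper vertices, so the routings are not automatically internally disjoint. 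I would rule this out using girth $5$: if $x$ and $x'$ reached a common anchor through a shared helper $a$, then, to avoid a $4$-cycle, their mutual visibility would itself have to pass through $a$, and any second shared helper would then close a $4$-cycle, contradicting the girth. Organizing this short case analysis so as to guarantee internally disjoint routings for one of the two forbidden minors is the crux; once it is done, $|R|\le 6$ follows and the proof is complete.
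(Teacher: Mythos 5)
Your proposal follows essentially the same route as the paper: the same lower-bound construction (a $5$-cycle made into an absolute clique plus a sixth vertex attached by internally disjoint special $2$-paths through private helpers), the same split on whether a critical relative clique contains a $5$-cycle (deferring the no-$5$-cycle case to Lemma~\ref{lem rel girth6 planar}), and the same key step that two good vertices off the $5$-cycle, each anchored to $v_1,v_3,v_4$ via Lemma~\ref{lem girth5 structure}, force a $K_5$-minor. The only difference is that you explicitly flag, and partially resolve via the girth condition, the possibility that the attaching special $2$-paths of the two outside vertices share internal helpers --- a disjointness issue the paper's one-line appeal to Lemma~\ref{lem girth5 structure} passes over silently --- so your sketch is, if anything, slightly more careful at the crux than the published argument.
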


\begin{proof}
For $2m+n \geq 3$,  $\omega_{r(m,n)}(\mathcal{P}_5) \geq (2m+n)+1$ due to 
  Lemma~\ref{lem rel girth6 2tree}, as partial $2$-trees are planar graphs in particular. On the other hand, $\omega_{r(m,n)}(\mathcal{P}_5) \geq 6$ due to the following construction:
  take an $(m,n)$-absolute clique whose underlying graph is the $5$-cycle (existance follows from Lemma~\ref{lem deg-diam}) and connect it to a sixth vertex using internally disjoint special $2$-paths.

Let $H$ be a critical $(m,n)$-relative clique of $\mathcal{P}_5$. If $H$ contains a $5$-cycle, then $\omega_{r(m,n)}(H) \geq 7$ will force a $K_5$-minor in 
$H$ by Lemma~\ref{lem girth5 structure}. As planar graphs cannot have a $K_5$-minor, $\omega_{r(m,n)}(H) \leq 6$, if $H$ contains a $5$-cycle.

On the other hand, if $H$ does not contain any $5$-cycle, 
then we are done due to Lemma~\ref{lem rel girth6 planar}. 
\end{proof}

Finally, we are ready to prove Theorem~\ref{th planar}

\medskip 

\begin{proof}[Proof of Theorem \ref{th planar}]
The proof of (i) directly follows from  Lemma~\ref{lem rel girth3 planar}. The proof of (ii) directly follows from  Lemma~\ref{lem rel girth4 planar}. The proof of (iii) directly follows from  Lemma~\ref{lem rel girth5 planar}. The proof of (iv) directly follows from  Lemma~\ref{lem rel girth6 planar} and the proof of (v) directly follows from  Proposition~\ref{prop girth7}.

\end{proof}

\section{Conclusions}\label{Conclusions}

This work may be regarded as the first systematic study of the $(m,n)$-relative clique number. We explored the $(m,n)$-relative clique number of subcubic graphs, graphs with bounded degree, partial $2$-trees and planar graphs of girth at least $g$, where $g \geq 3$. In case of subcubic graphs and partial $2$-trees having girth $g$, where $g \geq 3$, we have provided the exact bounds for all cases. In case of planar graphs having girth $g$, we were unable to provide tight bounds for the cases $g=3,4$. However, based on our experience of finding the bounds in those cases we would like to conjecture the following tight bounds.

\begin{conjecture}
For the family $\mathcal{P}_3$ of planar graphs,
$$\omega_{a(m,n)}({\mathcal{P}_3}) = \omega_{r(m,n)}({\mathcal{P}_3})=3(2m+n)^2 + (2m+n) +1.$$
\end{conjecture}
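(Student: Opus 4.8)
The lower bound $3(2m+n)^2+(2m+n)+1 \le \omega_{a(m,n)}(\mathcal{P}_3)$ is already supplied by Theorem~\ref{th planar}(i) via an explicit planar absolute clique, and since $\omega_{a(m,n)} \le \omega_{r(m,n)}$ always holds, proving the conjecture reduces to establishing the single matching upper bound $\omega_{r(m,n)}(\mathcal{P}_3) \le 3(2m+n)^2+(2m+n)+1$; these then squeeze both parameters to the common value. Writing $p = 2m+n$, the target $3p^2+p+1$ lies far below the bound $42p^2-11$ of Lemma~\ref{lem rel girth3 planar}, whose threefold pigeonholing — $3$-colourability of the outerplanar second neighbourhood, a degree bound of $7$, and the factor $2$ of Lemma~\ref{lem forbidden indep p2}, with product $3\cdot 7\cdot 2 = 42$ — is simply too lossy. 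The plan is to discard that global pigeonhole in favour of a local structural analysis patterned on the partial $2$-tree argument of Section~\ref{Partial 2-tree}, but carried out in the $K_5$-minor-free (rather than $K_4$-minor-free) world, where the factor $2$ of Lemma~\ref{lem forbidden agree indep p} is precisely what should upgrade the leading term from $p^2$ to $3p^2$.

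Concretely, I would first use discharging — combining the $5$-degeneracy of planar graphs, the independence of the helper set $S$ (Lemma~\ref{obs S is indep}), and the fact that every vertex of degree at most $3$ is good (Lemma~\ref{obs good degree}) — to locate a good vertex $u$ whose closed neighbourhood has a rigidly bounded and describable shape: ideally a good vertex of degree two, or failing that one whose good neighbours fall into a controlled number of adjacency types. Fixing a planar embedding, I would then partition the good vertices according to which neighbour(s) of $u$ they attach to, exactly as the sets $Y, Y_1, Y_2$ are formed in the partial $2$-tree proof. Before counting, I would prove the $K_5$-minor-free analogues of Lemmas~\ref{lem at most 1 common}, \ref{lem alpha-alpha forbidden} and~\ref{lem C1-C2 universal}: each asserts that a forbidden local configuration of good vertices agreeing on common vertices forces a $K_5$-minor (not merely a $K_4$-minor), and that the cyclic planar order around each anchor limits how many good vertices of a fixed type can coexist.

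The counting then proceeds by summing the contributions of each block using Lemma~\ref{lem forbidden agree indep p}: within a fixed adjacency type at most $2p$ independent good vertices may agree on a given vertex, so each of the $O(p)$ type-interactions contributes $O(p)$ good vertices. The bookkeeping should mirror the inequalities of the form $|R| \le p_1 p_2 + (p-p_1)q + (p-p_2)(p-q) + c$ that already appear in Lemmas~\ref{lem alpha-alpha forbidden} and~\ref{lem C2>1}, and should collapse to $3p^2+p+1$ once the factor $2$ is inserted in the right places. One must finally verify that the absolute clique witnessing the lower bound of Theorem~\ref{th planar}(i) saturates every inequality simultaneously, which both confirms tightness and identifies the extremal configuration.

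The main obstacle is that, unlike partial $2$-trees, planar graphs need not contain any low-degree good vertex, so the clean degree-two anchor of the Section~\ref{Partial 2-tree} proofs is unavailable; the discharging step that produces a usable anchor with a genuinely \emph{structured} — not merely bounded — neighbourhood is the crux, and it is here that a loss of even a constant factor would leave the bound short of $3p^2$. Equally delicate is forcing the second-neighbourhood good vertices into a single extremal pattern rather than merely bounding their number: the $3$-colourability shortcut of Lemma~\ref{lem rel girth3 planar} must be replaced by an \emph{exact} extremal characterisation of how good vertices distribute among the (at most $2p$ per type) slots of Lemma~\ref{lem forbidden agree indep p}. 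Establishing that this distribution cannot exceed the conjectured total, uniformly over all embeddings, is exactly the step that has so far resisted proof and is the reason the statement is recorded here as a conjecture rather than a theorem.
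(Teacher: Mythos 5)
The statement you are addressing is recorded in the paper as a \emph{conjecture}: the paper offers no proof of it, and the strongest upper bound it actually establishes is $\omega_{r(m,n)}(\mathcal{P}_3) \leq 42(2m+n)^2-11$ (Lemma~\ref{lem rel girth3 planar}). Your submission is likewise not a proof but a research programme, and you say so yourself in the final sentence. The genuine gap is therefore the entire upper-bound argument: at no point do you actually derive $\omega_{r(m,n)}(\mathcal{P}_3)\le 3(2m+n)^2+(2m+n)+1$. The $K_5$-minor-free analogues of Lemmas~\ref{lem at most 1 common}, \ref{lem alpha-alpha forbidden} and~\ref{lem C1-C2 universal} that your plan relies on are neither stated precisely nor proved, and it is not clear they even hold: those lemmas exploit the fact that in a $K_4$-minor-free graph two good vertices sharing two common neighbours have essentially one route to see each other, whereas in a planar ($K_5$-minor-free) graph many more routes survive, so the ``at most one good vertex per type-pair'' counting does not transfer.

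The other concrete failure point is the anchor. The partial $2$-tree argument of Section~\ref{Partial 2-tree} hinges on Lemma~\ref{obs good degree} producing a \emph{good vertex of degree two}, whose two neighbours $u_1,u_2$ dominate all of $R$ and make the $Y,Y_1,Y_2$ partition exhaustive. For planar graphs the best you can guarantee (via Jendrol' and Voss, as in Lemma~\ref{lem rel girth3 planar}) is a good vertex of degree at most $7$, so the good vertices split among up to $7$ anchors rather than $2$, and the quadratic-form bookkeeping $|R|\le p_1p_2+(p-p_1)q+(p-p_2)(p-q)+c$ that collapses so neatly for two anchors has no stated analogue for seven. Summing the per-neighbour bound $2(2m+n)^2$ of Lemma~\ref{lem forbidden indep p2} over $7$ neighbours gives $14(2m+n)^2+O(1)$ at best, still far from $3(2m+n)^2+(2m+n)+1$; nothing in your proposal identifies the mechanism that would force the overlap needed to close that factor. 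Until those two steps are supplied, the statement remains exactly what the paper says it is: a conjecture.
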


Notice that our conjecture strengthens the conjecture by Bensmail, Duffy and Sen~\cite{bensmail2017analogues} which claimed only 
$\omega_{a(m,n)}({\mathcal{P}_3}) = 3(2m+n)^2 + (2m+n) +1$. 
We make a similar conjecture for triangle-free planar graphs also. Notice that, as in this case we have already found out the exact value of $\omega_{a(m,n)}({\mathcal{P}_4})$, the conjecture only concerns the 
value of $\omega_{r(m,n)}({\mathcal{P}_4})$.

\begin{conjecture}
For the family $\mathcal{P}_4$ of triangle-free planar graphs,
$$\omega_{r(m,n)}({\mathcal{P}_4})=2(2m+n)^2 +2.$$
\end{conjecture}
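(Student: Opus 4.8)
The plan is to establish the two inequalities $\omega_{r(m,n)}(\mathcal{P}_4) \geq 2(2m+n)^2 + 2$ and $\omega_{r(m,n)}(\mathcal{P}_4) \leq 2(2m+n)^2 + 2$ separately; throughout I write $p = 2m+n$ and focus on $p \geq 3$, where the planar structural lemmas of this section apply.

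For the lower bound I would exhibit a bipartite (hence triangle-free) planar $(m,n)$-graph realizing the bound, generalizing the $K_{2,(2m+n)^2}$ example of Lemma~\ref{lem 2tree triangle free lower bound} by exploiting the factor-two slack of Lemma~\ref{lem forbidden agree indep p}. Take two vertices $u_1, w$ and join both to a set $V_0$ of $2p^2$ vertices, so that $\{u_1, w\} \cup V_0$ induces a $K_{2, 2p^2}$; this is planar, with the vertices of $V_0$ laid out in a line and consecutive ones bounding quadrilateral faces $u_1 v w v'$. Partition $V_0$ into $p$ bundles $B_\alpha$ of size $2p$, one per adjacency type $\alpha$, and set $\sigma(u_1 v) = \alpha$ for every $v \in B_\alpha$, so that vertices in different bundles see one another through the special $2$-paths at $u_1$. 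Inside each bundle I assign the types at $w$ in consecutive equal pairs $\beta_1\beta_1\beta_2\beta_2\cdots\beta_p\beta_p$: any two bundle-mates of different $w$-type then see each other through $w$, while each same-type pair is consecutive in the embedding and can be repaired by inserting a degree-two helper $x$ into the face between them with $\sigma(vx)\neq\sigma(v'x)$. The result is a bipartite planar graph of girth $4$ in which the helpers lie outside the clique, and $V_0 \cup \{u_1, w\}$ is a relative clique of size $2p^2 + 2$; I would finish by choosing the types so that $\sigma(u_1 v) \neq \sigma(v w)$ for some $v \in V_0$, guaranteeing that $u_1$ and $w$ also see each other.

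For the upper bound I would mirror the case analysis used for triangle-free partial $2$-trees (Lemmas~\ref{lem C2=0 tf}, \ref{lem C2=1 tf}, \ref{lem C2>1}), but feed it the planar structural bounds rather than the $K_4$-minor-free ones. Starting from a critical relative clique $H$ and a good degree-two vertex $u$ with neighbors $u_1, u_2$, I would split $R$ into $Y$, $Y_1$, $Y_2$ as before; since $H$ is triangle-free, every neighborhood is independent, so Lemma~\ref{lem forbidden agree indep p} caps $|N^\alpha(u_i) \cap R|$ at $2p$ and Lemma~\ref{lem forbidden indep p2} caps $|N(u_i) \cap R|$ at $2p^2$. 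In the regime $|Y_2| = 0$ this already yields $|R| \le |N(u_1)\cap R| + 2 \le 2p^2 + 2$, and the remaining regimes should go through once the two auxiliary facts of the $2$-tree analysis are re-established: that an $\alpha$-neighbor in $Y$ blocks an $\alpha$-neighbor in $Y_i$ (the analogue of Lemma~\ref{lem alpha-alpha forbidden}), and that $Y_1 \cup Y_2$ is dominated by a single vertex $w$ (the analogue of Lemma~\ref{lem C1-C2 universal}), now with a $K_5$- or $K_{3,3}$-minor as the forbidden configuration in place of a $K_4$-minor.

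The hardest part will be exactly these two steps. First, because $\mathcal{P}_4$ is not $2$-closed — deleting a degree-two vertex and joining its two neighbors can create a triangle whenever they already share a neighbor — Lemma~\ref{obs good degree} does not hand me a good degree-two vertex for free, so I expect to need a separate Euler-formula or discharging argument to force a good vertex of controlled degree in a critical $H$. Second, the $K_4$-minor-free reasoning underlying Lemmas~\ref{lem alpha-alpha forbidden} and~\ref{lem C1-C2 universal} must be replaced by planarity (rotation-system) arguments that are genuinely weaker, and the delicate point is to show that the slack they introduce is \emph{exactly} a factor of two, matching the $2p$ of Lemma~\ref{lem forbidden agree indep p}, rather than something larger; this is presumably why the statement is only conjectured, and closing this gap is where the real work lies.
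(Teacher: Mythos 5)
First, a point of order: the paper does not prove this statement at all --- it is posed as an open conjecture, and the paper's actual proven bounds for $\mathcal{P}_4$ are $\omega_{a(m,n)}(\mathcal{P}_4)=(2m+n)^2+2$ and $\omega_{r(m,n)}(\mathcal{P}_4)\leq 14(2m+n)^2+1$ (Lemma~\ref{lem rel girth4 planar}, via Jendrol'--Voss and Lemma~\ref{lem forbidden indep p2}). So there is no hidden argument you failed to reproduce, and your attempt should be judged as an attack on an open problem. On that score, your lower-bound half looks sound and is genuinely more than the paper records: writing $p=2m+n$, the gadget $K_{2,2p^2}$ on $\{u_1,w\}\cup V_0$ with $p$ bundles of size $2p$, consecutive same-type pairs repaired by degree-two helpers inserted into the quadrilateral faces, is planar, has girth $4$, and every pair in $V_0\cup\{u_1,w\}$ is adjacent or joined by a special $2$-path, so by Lemma~\ref{lem 2path char} it certifies $\omega_{r(m,n)}(\mathcal{P}_4)\geq 2p^2+2$ (only the sign convention $\sigma(vu)=-\sigma(uv)$ for arcs needs minor care when arranging $u_1$ to see $w$). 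It is consistent that this exceeds $\omega_{a(m,n)}(\mathcal{P}_4)=p^2+2$, precisely because your helpers are outside the clique; this half is a real contribution.

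The upper-bound half, however, is a program with two genuine gaps, both of which you correctly flag and neither of which you close. (1) Your entire $Y/Y_1/Y_2$ case analysis is predicated on a good vertex of degree two. Lemma~\ref{obs good degree} cannot supply it: $\mathcal{P}_4$ is not $2$-closed, and girth-$4$ planarity only guarantees \emph{some} vertex of degree at most $3$, possibly a helper; this is exactly why the paper retreats to Jendrol'--Voss and a good vertex of degree at most $7$, paying the factor that turns $2p^2$ into $14p^2$. A generic Euler-formula or discharging argument will bound degrees but will not by itself hand you the degree-two good vertex on which the exact constant $+2$ depends, so step one of your plan is currently unsupported. (2) Even granting that vertex, the $2$-tree lemmas you propose to transplant (Lemmas~\ref{lem alpha-alpha forbidden}, \ref{lem C1-C2 universal}, and especially Lemma~\ref{lem at most 1 common}) are proved by exhibiting $K_4$-minors, which planar graphs freely contain; under mere planarity a class $N^{\alpha}(v_1)\cap N^{\beta}(v_2)$ can hold two good vertices rather than one --- this is exactly the factor-two slack visible in Lemma~\ref{lem forbidden agree indep p} --- and your counting in the $|Y_2|\geq 1$ regimes closes only if that slack never compounds across $Y$, $Y_1$, $Y_2$ simultaneously, which you assert as a hope rather than prove. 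Only your $|Y_2|=0$ case is complete, and it is conditional on (1). In short: the lower bound $2p^2+2$ can be considered established by your construction; the matching upper bound remains open, as in the paper.
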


\bibliographystyle{abbrv}
\bibliography{NSS14}

\end{document}